\definecolor{darkpastelgreen}{rgb}{0.01, 0.75, 0.24}
\definecolor{cadmiumgreen}{rgb}{0.0, 0.42, 0.24}
\definecolor{armygreen}{rgb}{0.29, 0.33, 0.13}
\DeclareMathOperator*{\argmax}{arg\,max}
\newtheorem{assumption}{Assumption}
\numberwithin{assumption}{section}
\newtheorem{remark}{Remark}
\numberwithin{remark}{section}
\newtheorem{theorem}{Theorem}
\numberwithin{theorem}{section}
\newtheorem{lemma}[theorem]{Lemma}
\title{\textbf{Sharpened Quasi-Newton Methods: Faster Superlinear Rate and Larger Local Convergence Neighborhood}
\vspace{3mm}}
\author{Qiujiang Jin\thanks{Department of Electrical and Computer Engineering, The University of Texas at Austin, Austin, TX, USA. \{qiujiang@austin.utexas.edu\}.}\qquad Alec Koppel\thanks{Amazon, Bellevue, WA, USA. \{aekoppel@amazon.com\}.}\qquad Ketan Rajawat\thanks{Department of Electrical Engineering, Indian Institute of Technology Kanpur, Kanpur, UP, INDIA. \{ketan@iitk.ac.in\}.} \qquad   Aryan Mokhtari\thanks{Department of Electrical and Computer Engineering, The University of Texas at Austin, Austin, TX, USA. \{mokhtari@austin.utexas.edu\}.} \qquad    }
\date{\empty}
\begin{document}

\maketitle

\begin{abstract}
\noindent Non-asymptotic analysis of quasi-Newton methods have gained traction recently. In particular, several works have established a non-asymptotic superlinear rate of $\mathcal{O}((1/\sqrt{t})^t)$ for the (classic) BFGS method by exploiting the fact that its error of Newton direction approximation approaches zero. Moreover, a greedy variant of BFGS was recently proposed which accelerates its convergence by directly approximating the Hessian, instead of the Newton direction, and achieves a fast local quadratic convergence rate. Alas, the local quadratic convergence of Greedy-BFGS requires way more updates compared to the number of iterations that BFGS requires for a local superlinear rate. This is due to the fact that in Greedy-BFGS the Hessian is directly approximated and the Newton direction approximation may not be as accurate as the one for BFGS. In this paper, we close this gap and present a novel BFGS method that has the best of both worlds in that it leverages the approximation ideas of both BFGS and Greedy-BFGS to properly approximate the Newton direction and the Hessian matrix simultaneously. Our theoretical results show that our method out-performs both BFGS and Greedy-BFGS in terms of convergence rate, while it reaches its quadratic convergence rate with fewer steps compared to Greedy-BFGS. Numerical experiments on various datasets also confirm our theoretical findings.

\vspace{5mm}

\noindent \textbf{Keywords:} quasi-Newton method, BFGS method, greedy quasi-Newton method, superlinear convergence rate, non-asymptotic analysis
\end{abstract}

\newpage

\section{Introduction}
In this paper, we focus on the use of quasi-Newton methods to solve the following unconstrained problem
\begin{equation}\label{main_prob}
\min_{x \in \mathbb{R}^d} f(x),
\end{equation}
where $f:\mathbb{R}^d\to \mathbb{R}$ is strongly convex and its gradient is Lipschitz continuous; see details in Section~\ref{sec:general}. We denote the  unique optimal solution of \eqref{main_prob} by $x_*$.

First-order algorithms, i.e., gradient-based methods, are widely used for solving~\eqref{main_prob}, and it is well-known that their iterates converge to $x_*$ at a linear rate (i.e., the error decays exponentially fast). 
A major advantage of first-order methods is their low computational cost of  $\mathcal{O}(d)$, where $d$ is the problem dimension. However, the convergence rate of these methods depends on the problem curvature and hence they could be slow in ill-conditioned problems. Second-order methods that leverage the objective function Hessian to improve their curvature estimation often arise as a natural alternative to accelerate convergence in ill-posed problems, and they achieve fast local convergence rates \citep{bennett1916newton,ortega1970iterative,conn2000trust,nesterov2006cubic}. Specifically, Newton's method achieves a local quadratic convergence rate when applied to solve \eqref{main_prob} with the additional assumption that the Hessian is Lipschitz \citep[Chapter 9]{boyd04}. A major obstacle in the implementation of Newton's method though is its requirement to solve a linear system at each iteration, which makes its computational cost $\mathcal{O}(d^3)$.

Quasi-Newton (QN) methods serve as a middle ground between first- and second-order methods, as they improve the linear rate of first-order methods and converge superlineraly, and simultaneously their computation cost is $\mathcal{O}(d^2)$ which improves the $\mathcal{O}(d^3)$ cost of Newton-type methods. Their main idea is to construct a positive definite matrix that approximates the Hessian required in Newton's method. Since the update of Hessian approximation matrix in QN methods only requires a set of matrix-vector multiplications, their computational cost per iteration is $\mathcal{O}(d^2)$. There are several types of QN methods that differ in their Hessian approximation updates, including Symmetric Rank-One (SR1) method \citep{conn1991convergence}, the Broyden method \citep{broyden1965class,Broyden,gay1979some}, the Davidon-Fletcher-Powell (DFP) method \citep{davidon1959variable,fletcher1963rapidly}, the Broyden-Fletcher-Goldfarb-Shanno (BFGS) method \citep{broyden1970convergence,fletcher1970new,goldfarb1970family,shanno1970conditioning}, the limited-memory BFGS (L-BFGS) method \citep{nocedal1980updating,liu1989limited}, and the Greedy-QN method \citep{rodomanov2020greedy}. 

\begin{figure}
  \centering
    \includegraphics[width=0.45\linewidth]{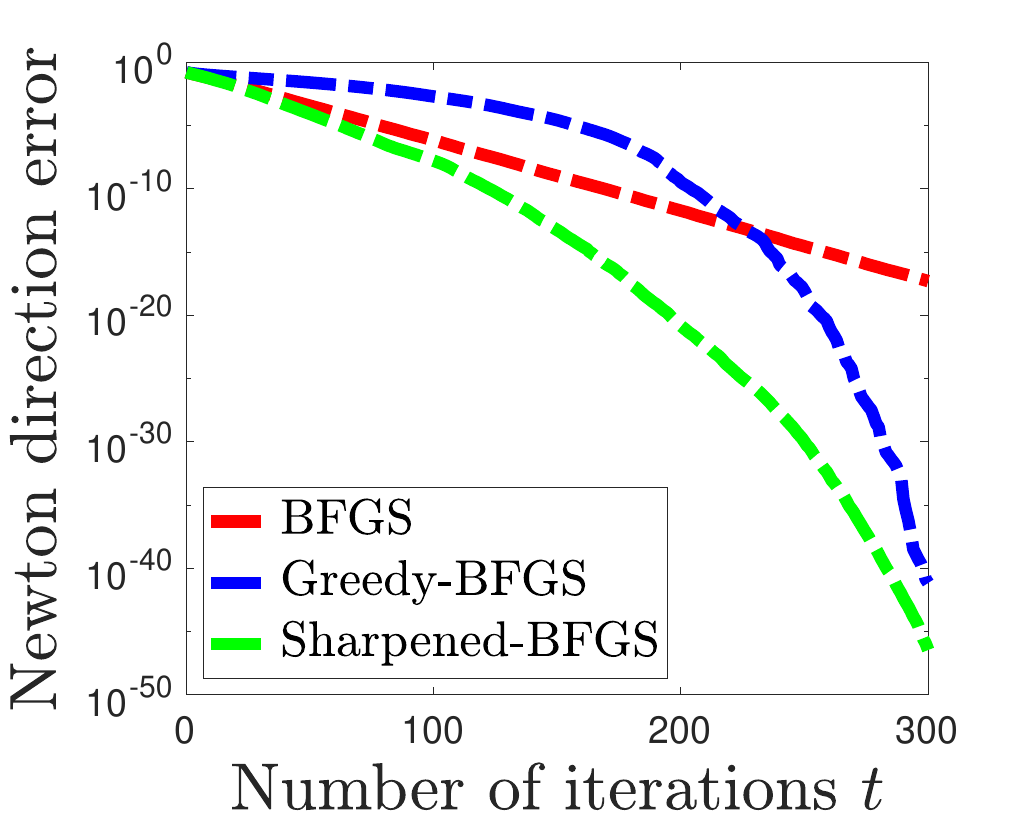}\qquad
    \includegraphics[width=0.45\linewidth]{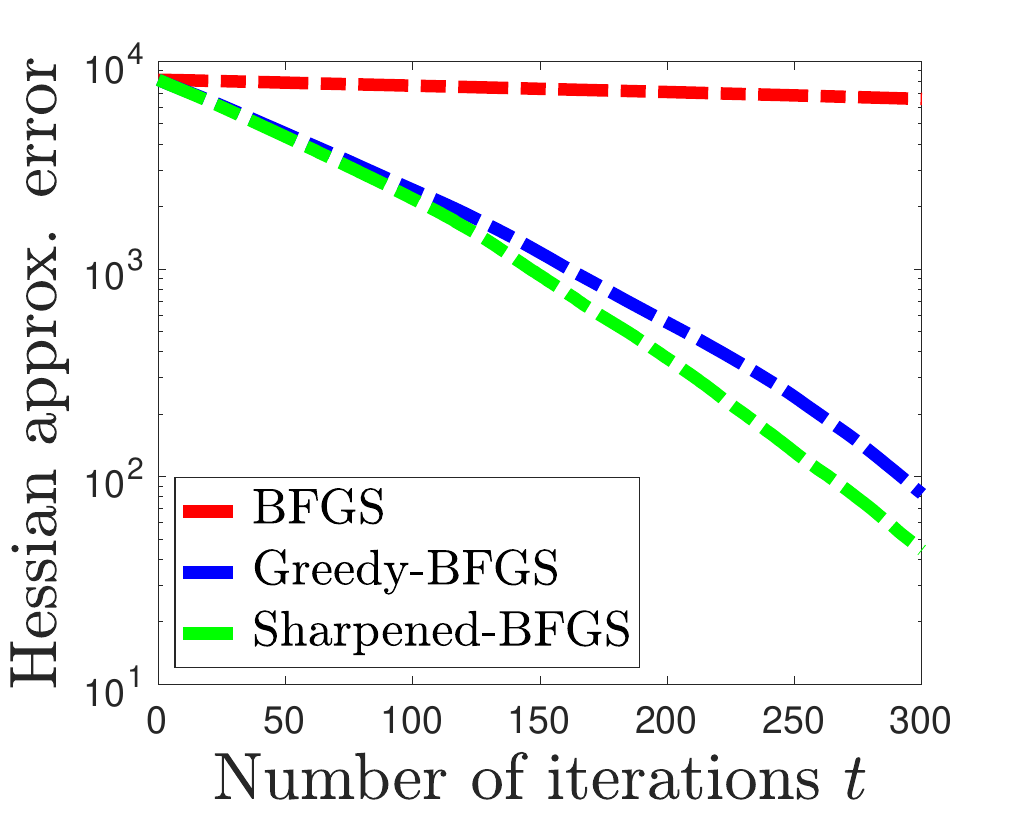}
  \vspace{-2mm}
  \caption{Comparison of BFGS, Greedy-BFGS, and the proposed Sharpened-BFGS algorithms in terms of Newton direction error~(top) and Hessian approximation error~(bottom) for a quadratic problem with dimension $d=400$ and condition number $\kappa=100$.}
  \label{toy_example}
\end{figure}

Perhaps the most important property of QN methods is their local superlinear convergence. Specifically, \citet{rodomanov2020greedy} introduced and analyzed a novel Greedy-QN method which is based on the classical Broyden class of QN methods and uses a greedily-selected vector to maximize certain measure of progress (see Section~\ref{sec:qn_method} for more details). Greedy-QN achieves a non-asymptotic quadratic convergence rate of $(1 - {1}/{d\kappa})^{t^2/2}(d\kappa)^t$, where $\kappa$ is the problem condition number. Note that this bound is equivalent to $((1 - {1}/{d\kappa})^{t/2}(d\kappa))^t$ which
shows that the fast quadratic convergence starts when $t\geq d\kappa \ln (d\kappa)$. It is also worth noting that in comparison with standard QN methods, greedy QN requires more information, including the diagonal elements of the Hessian at each iteration. 
In a follow-up work,  \citet{rodomanov2020rates} proved a non-asymptotic superlinear convergence rate for standard QN methods including the DFP and BFGS methods. They showed that BFGS and DFP achieve a local superlinear convergence rate of $({d\kappa}/{t})^{t/2}$ and $({d\kappa^2}/{t})^{t/2}$, respectively, under the assumptions that the objective function is strongly convex, smooth and strongly self-concordant. Later, \citet{rodomanov2020ratesnew} improved their results to the convergence rates of $({(d\ln{\kappa})}/{t})^{{t}/{2}}$ for BFGS and $({(d\kappa\ln{\kappa})}/{t})^{{t}/{2}}$ for DFP.  As noted in Table~\ref{tab_1}, the convergence rate of BFGS is slower than the one for Greedy-BFGS, but the superlinear rate starts at a smaller time index. 

\textbf{Contributions.} As mentioned above, (standard) BFGS aims at approximating the Newton direction and obtains a fast convergence rate from the beginning, but it fails to perfectly approximate the Hessian. On the other hand, Greedy-BFGS's goal is to directly approximate the Hessian matrix and therefore at first its convergence is slower than BFGS, but once its Hessian approximation improves it converges substantially faster than BFGS; see Figure~\ref{toy_example}. Considering these points, a natural question that arises is:

\vspace{-2mm}
\begin{quote}
\textit{Is it possible to achieve the best of two worlds and develop a QN method that  exhibits faster local convergence by approximating both the Newton direction and the Hessian matrix?}    
\end{quote}
\vspace{-2mm}

In this paper, we address this question by proposing a novel Sharpened-BFGS method which utilizes ideas of the classic BFGS and Greedy-BFGS. The proposed Sharpened-BFGS method exploits the initial fast convergence of BFGS by approximating the Newton direction, while developing an accurate approximation of the Hessian by following the Greedy-BFGS idea which allows for a quadratic convergence rate. As stated in Table~\ref{tab_1}, our method outperforms both BFGS and Greedy-BFGS in terms of convergence rate, while it reaches the superlinear convergence rate with fewer steps compared to the greedy method. We should add that the computational cost per iteration of Sharpened-BFGS is the same as its standard and greedy counterparts.

\noindent{\textbf{Related Work.}} \citet{qiujiang2020quasinewton1} established a non-asymptotic superlinear convergence rate of $({1}/{t})^{t/2}$ for standard QN methods under the assumptions that the objective function is strongly convex, smooth and its Hessian is Lipschitz continuous at the optimal solution. They also established a similar result for self-concordant functions. Their local convergence rate does not depend on the problem parameters such as $d$ or $\kappa$, but their results require both Hessian approximation error and the distance to the optimal solution to be sufficiently small. Moreover, \citet{zhangzhihua2021quasinewton1} obtained the explicit local superlinear convergence rate of the SR1 method. Further, \citet{zhangzhihua2021quasinewton2} extended the non-asymptotic local superlinear convergence rate of the Broyden family QN methods for solving nonlinear equations. It is worth noting that recently, \citet{zhangzhihua2021quasinewton3} proposed a randomized version of Greedy-BFGS which obtains a convergence rate of $(d\kappa(1 \!-\! \frac{1}{d})^{\frac{t}{2}})^{t}$. This randomized technique can be also utilized for our proposed Sharpened BFGS method to improve its convergence rate dependency in terms of $\kappa$. Due to space limitation, we present and analyze randomized Sharpened-BFGS in Appendix~\ref{sec:random}. 

\renewcommand{\arraystretch}{1.2}
\begin{table}[t]
  \vspace{1mm}
  \centering
  \begin{tabular}{ |c|c|c| }
    \hline
    Algorithm & Superlinear Rate & $t_0$ \\
    \hline
    \hline
    Standard BFGS & $(\frac{d\ln{\kappa}}{t})^{\frac{t}{2}}$ & $d\ln{\kappa}$ \\
    \hline
    Greedy-BFGS & $\left(d\kappa(1 \!-\! \frac{1}{d\kappa})^{\frac{t}{2}}\right)^{t}$ & $d\kappa\ln{(d\kappa)}$ \\
    \hline
    Sharpened-BFGS& $(1 - \frac{1}{d\kappa})^{\frac{t(t - 1)}{4}} (\frac{d\kappa}{t})^{\frac{t}{2}}$ & $d\kappa$ \\
    \hline
  \end{tabular}
  \caption{Convergence rate comparison of different variants of BFGS, where $d$ is the dimension, $\kappa$ is the condition number and $t_0$ is the time index at which the  superlinear convergence begins.}
  \label{tab_1}
  \vspace{-2mm}
\end{table}

\section{Preliminaries}\label{sec:qn_method}

In this section, we review some basics of QN methods that we require for developing our method. Consider $x_t\in \mathbb{R}^d$ as the iterate associated with time index $t$ and $\nabla f(x_t)\in \mathbb{R}^d$ as the objective function gradient evaluated at $x_t$. The general form of a QN update is given by
\begin{equation}\label{qn_method}
    x_{t + 1} = x_t - \eta_t G_t^{-1} \nabla{f(x_t)},
\end{equation}
where $\eta_t > 0$ is the step size (learning rate) and $G_t\in \mathbb{R}^{d\times d}$ is the matrix approximating the Hessian $\nabla^2 f(x_t)\in \mathbb{R}^{d\times d}$. In general, $\eta_t$ is determined by some line search algorithms so that the iteration generated converge to the optimal solution globally. In this paper, we focus on the local convergence analysis of QN algorithms, which requires the use of a unit step size $\eta_t = 1$. Hence, in the rest of the paper, we assume that the iterates $\{x_t\}_{t = 1}^{\infty}$ stay in a local neighborhood of $x_*$ and  $\eta_t=1$ is always admissible.

\subsection{BFGS Operator and Algorithm}\label{sec:standard_BFGS}

The essence of a QN method is its update for the Hessian approximation matrix $G_t$. There are various ways for updating $G_t$, but in this paper we focus on the BFGS method. Before stating the BFGS method, we first introduce it as an algorithm for approximating linear operators. This perspective turns out to be advantageous for unifying it with its greedy variant. To do so, consider $ A \in \mathbb{R}^{d \times d}$ as a positive
definite linear operator, and suppose $ G \in \mathbb{R}^{d \times d}$ is the operator that approximates $A$ and is updated according to the BFGS update. Then, the BFGS update rule for approximating operator $A$ along the direction $u \in \mathbb{R}^{d}\backslash\{0\}$ is 
\begin{equation}\label{BFGS_update}
    BFGS(A, G, u) = G_+ := G - \frac{G u u^\top G}{u^\top G u} + \frac{A u u^\top A}{u^\top A u}.
\end{equation}
Note that this update tries to move from $G$ to $G_+$ in a way that operators $A$ and $G_+$ are equal to each other in the direction of vector $u$, i.e., $Au=G_+u$.

\begin{remark}\label{remark_1}
As noted in \eqref{qn_method}, we need to compute the inverse of the Hessian approximation matrix at each step. Hence, we need a direct update for the Hessian inverse approximation matrices. By exploiting the Sherman-Morrison-Woodbury formula, one can show that the Hessian inverse approximation matrix $H = G^{-1}$ update can be written as
\begin{equation}\label{BFGS_inverse_update}
    H_+ = \left(I-\frac{u u^\top A}{u^\top Au}\right) H \left(I-\frac{ Au u^\top}{u^\top Au}\right) +\frac{u u^\top}{u^\top Au}.
\end{equation}
Hence, the computational cost of BFGS is $\mathcal{O}(d^2)$, as it only requires computation of matrix-vector multiplication.
\end{remark}

When we focus on minimizing a function and the ultimate linear operator that we aim to approximate is its curvature, then we select the direction as $u =x_{t+1}-x_t$ and the desired operator as the average Hessian $A = J_t:= \int_{0}^{1}\nabla^2{f(x_t + \tau(x_{t+1} - x_t))}d\tau$. This way we ensure that the new Hessian approximation matrix $G_{t+1}$ satisfies the secant condition, i.e.,  $$G_{t+1} (x_{t+1}-x_t)=J_t(x_{t+1}-x_t) = \nabla f(x_{t+1})-\nabla f(x_t), $$
If we define the variable and gradient differences as 
\begin{equation}\label{secant_condition}
    s_t := x_{t+1} - x_t, \qquad y_{t} := \nabla{f(x_{t+1})} - \nabla{f(x_t)},
\end{equation}
then the classic BFGS update is equivalent to 
\begin{equation}\label{BFGS_standard_update}
    G_{t+1} = G_t - \frac{G_t s_t s_t^\top G_t}{s_t^\top G_t s_t} + \frac{y_t y_t^\top}{s_t^\top y_t}.
\end{equation}

A major advantage of the BFGS update in \eqref{BFGS_standard_update} is that it forces the new Hessian approximation matrix $G_{t+1}$ to satisfy the secant condition, which implies $G_{t+1}s_t = y_t$. This condition ultimately ensures that the BFGS direction $G_t^{-1}\nabla f(x_t)$ approaches the Newton direction $\nabla^2 f(x_t)^{-1}\nabla f(x_t)$; see Chapter 6 of \cite{nocedal2006numerical} for details.

\subsection{Greedy-BFGS Algorithm}\label{sec:greedy_BFGS}

As mentioned in the previous section, BFGS does a good a job in approximating the Newton direction, but its Hessian approximation may not approach the true Hessian. To be precise, consider the following metric which captures the difference between positive definite matrices $A,G\in  \mathbb{R}^{d \times d}$
\begin{equation}\label{sigma}
\sigma(A, G) := Tr(A^{-1}G) - d,
\end{equation}
where 
$Tr(X)$ is the trace of matrix $X$, i.e., the sum of the diagonal elements of $X$. 
Note that if $A \preceq G$, we can use $\sigma(A, G)$ as a  potential function that measures the distance between two matrices $A$ and $G$. Note that $\sigma(A, G) = 0$ if and only if $A = G$. 
Using the above potential function, in the next lemma, we state the error of Hessian approximation for the BFGS operator in \eqref{BFGS_update}. The proof can be found in \cite{rodomanov2020greedy}.

\begin{lemma}\label{lemma_BFGS_general}
Consider positive definite matrices $A, G \in \mathbb{R}^{d \times d}$ and suppose that $G_{+} = BFGS(A, G, u)$ as defined in \eqref{BFGS_update} and $u \in \mathbb{R}^d\backslash\{0\}$. If $A \preceq G$, then we have
\begin{equation}\label{lemma_BFGS_general_1}
    \sigma(A, G) - \sigma(A, G_+) \geq \frac{u^\top G u}{u^\top A u}-1.
\end{equation}
\end{lemma}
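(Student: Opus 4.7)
The plan is to compute $\sigma(A,G)-\sigma(A,G_+)$ directly from the definition $\sigma(A,G)=\mathrm{Tr}(A^{-1}G)-d$, and then reduce the resulting inequality to a Cauchy--Schwarz estimate. Since the $-d$ terms cancel, we get
\begin{equation*}
\sigma(A,G)-\sigma(A,G_+) = \mathrm{Tr}\bigl(A^{-1}(G-G_+)\bigr),
\end{equation*}
and from the definition of the BFGS operator in \eqref{BFGS_update},
\begin{equation*}
G-G_+ = \frac{Guu^\top G}{u^\top G u} - \frac{Auu^\top A}{u^\top A u}.
\end{equation*}

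Next I would evaluate the two traces separately using the cyclic property $\mathrm{Tr}(Mu u^\top) = u^\top M u$. The second term is straightforward and collapses to $1$, because the $A^{-1}$ cancels one of the $A$'s and leaves $\mathrm{Tr}(uu^\top A)/(u^\top A u) = 1$. The first term becomes $u^\top G A^{-1} G u/(u^\top G u)$. Hence, after these routine simplifications,
\begin{equation*}
\sigma(A,G)-\sigma(A,G_+) = \frac{u^\top G A^{-1} G u}{u^\top G u} - 1.
\end{equation*}

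At this point the lemma reduces to proving the clean inequality
\begin{equation*}
\frac{u^\top G A^{-1} G u}{u^\top G u} \;\geq\; \frac{u^\top G u}{u^\top A u},
\end{equation*}
or equivalently $(u^\top G A^{-1} G u)(u^\top A u)\geq (u^\top G u)^2$. I expect this to be the only nontrivial step. My plan is to handle it by Cauchy--Schwarz in the inner product induced by the identity: set $v:=A^{-1/2}Gu$ and $w:=A^{1/2}u$, so that $\|v\|^2 = u^\top G A^{-1} G u$, $\|w\|^2 = u^\top A u$, and $\langle v,w\rangle = u^\top G u$ (using symmetry of $A^{-1/2}$ and $A^{1/2}$). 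Then $\|v\|^2\|w\|^2\geq \langle v,w\rangle^2$ gives exactly the desired inequality, completing the proof. Notice that this Cauchy--Schwarz argument does not itself require $A\preceq G$; the hypothesis $A\preceq G$ is used only implicitly to guarantee that the right-hand side of \eqref{lemma_BFGS_general_1} is nonnegative (so that $\sigma$ genuinely decreases under a BFGS step) and that $\sigma(A,G)$ is a meaningful nonnegative potential.
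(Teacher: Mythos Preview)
Your argument is correct. The paper itself does not give a proof of this lemma---it simply defers to \cite{rodomanov2020greedy}---so there is no in-paper proof to compare against. Your direct trace computation yielding the exact identity
\[
\sigma(A,G)-\sigma(A,G_+)=\frac{u^\top G A^{-1} G u}{u^\top G u}-1,
\]
followed by the Cauchy--Schwarz step $(u^\top G A^{-1} G u)(u^\top A u)\geq (u^\top G u)^2$ via $v=A^{-1/2}Gu$, $w=A^{1/2}u$, is precisely the standard proof from that reference. Your remark that the Cauchy--Schwarz step does not itself use $A\preceq G$ is also accurate; the hypothesis only ensures that $\sigma$ is a nonnegative potential and that the right-hand side of \eqref{lemma_BFGS_general_1} is nonnegative.
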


This result shows how fast the gap between the Hessian approximation and the true Hessian decreases after one step of BFGS.
The result in  Lemma~\ref{lemma_BFGS_general} also shows that the selection of direction $u$ can influence the decrease in the trace potential function $\sigma(A, G)$ after one BFGS update. Note that for an arbitrary direction $u \in \mathbb{R}^{d}\backslash\{0\}$, there is no guarantee that the Hessian approximation matrix converges to the exact Hessian matrix. In fact, if we set $u=x^+-x$ as done in the classic BFGS update, there is no guarantee that $\sigma(A, G)$ converges to $0$. This observation reveals the following question: How can we select $u$ to maximize the progress in decreasing $\sigma(A, G)$ and ensuring that $\sigma(A, G)$ converges to $0$, i.e., $G$ converges to $A$? 

\citet{rodomanov2020greedy} answered this question by proposing a greedy selection scheme for determination of the best choice of $u$. To better explain this concept, consider a quadratic problem, where the objective function Hessian is fixed and denoted by the positive definite matrix $A$. In this case, to maximize the right hand side of \eqref{lemma_BFGS_general_1}, which shows the progress for the BFGS update, one could select $u$ as
\begin{equation}\label{greedy_vector}
    \bar{u}(A, G) := \argmax_{u \in \{e_i\}_{i = 1}^{d}} \frac{u^\top G u}{u^\top A u},
\end{equation}
where $\{e_i\}$ is the vector whose $i$-th element is $1$ and its remaining elements are $0$. If we choose $u = \bar{u}(A, G)$ in each iteration of BFGS update \eqref{BFGS_update}, we obtain the Greedy-BFGS algorithm in  \cite{rodomanov2020greedy}. The advantage of this greedily selected is that it ensures the trace potential function $\sigma(A, G)$ is strictly decreasing and converges to $0$ linearly as specified in the following lemma.
\begin{lemma}[\cite{rodomanov2020greedy}]\label{lemma_BFGS_greedy}
Consider positive definite matrices $A, G \in \mathbb{R}^{d \times d}$ that satisfy $A \preceq G$ and  $ \mu I \preceq A \preceq LI$, where $0 < \mu \leq L$ are two constants. Suppose that $\bar{G}_{+} = BFGS(A, G, \bar{u}(A, G))$ where $\bar{u}(A, G) \in \mathbb{R}^d$ is  greedily selected as defined in \eqref{greedy_vector}. Then,
\begin{equation}\label{lemma_BFGS_greedy_1}
    \sigma(A, \bar{G}_{+}) \leq \left(1 - \frac{\mu}{dL}\right)\sigma(A, G).
\end{equation}
\end{lemma}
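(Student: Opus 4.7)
\textbf{Proof proposal for Lemma~\ref{lemma_BFGS_greedy}.}
The plan is to plug the greedy direction into the general bound of Lemma~\ref{lemma_BFGS_general} and then massage the resulting Rayleigh-type quantity into a constant-factor contraction on the potential $\sigma(A,G)$. Concretely, since $\bar u(A,G)$ is selected from the canonical basis, for $u=e_i$ the ratio $u^\top G u/(u^\top A u)$ equals $G_{ii}/A_{ii}$, so that $\bar u^\top G \bar u/(\bar u^\top A \bar u)=\max_{i}G_{ii}/A_{ii}$. Hence Lemma~\ref{lemma_BFGS_general} (which applies because $A\preceq G$) gives
\begin{equation*}
\sigma(A,G)-\sigma(A,\bar G_+)\;\geq\;\max_{1\leq i\leq d}\frac{G_{ii}}{A_{ii}}-1.
\end{equation*}

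Next I would lower bound the maximum by an aggregate quantity using the mediant inequality: if $\max_i a_i/b_i=M$ with $b_i>0$, then $a_i\leq M b_i$ for each $i$, so $\sum_i a_i\leq M\sum_i b_i$, i.e.\ $M\geq \sum_i a_i/\sum_i b_i$. Applying this with $a_i=G_{ii}$ and $b_i=A_{ii}$ yields $\max_i G_{ii}/A_{ii}\geq \mathrm{Tr}(G)/\mathrm{Tr}(A)$, and subtracting $1$ gives
\begin{equation*}
\max_i\frac{G_{ii}}{A_{ii}}-1\;\geq\;\frac{\mathrm{Tr}(G-A)}{\mathrm{Tr}(A)}.
\end{equation*}
The numerator and denominator will now be controlled separately: the denominator by $\mathrm{Tr}(A)\leq dL$ (since $A\preceq LI$ implies $A_{ii}\leq L$), and the numerator by relating $\mathrm{Tr}(G-A)$ to $\sigma(A,G)$.

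The key identity for the numerator is $\sigma(A,G)=\mathrm{Tr}(A^{-1}G)-d=\mathrm{Tr}(A^{-1}(G-A))$. Because $A\succeq \mu I$, we have $A^{-1}\preceq (1/\mu)I$, hence $(1/\mu)I-A^{-1}\succeq 0$; since also $G-A\succeq 0$ (the hypothesis $A\preceq G$), the standard matrix fact that $\mathrm{Tr}(XY)\geq 0$ for symmetric positive semidefinite $X,Y$ (which one sees by writing $X=Z^2$ with $Z\succeq 0$ and noting $\mathrm{Tr}(Z^2 Y)=\mathrm{Tr}(ZYZ)\geq 0$) yields $\mathrm{Tr}\bigl(((1/\mu)I-A^{-1})(G-A)\bigr)\geq 0$, i.e.\ $\mathrm{Tr}(A^{-1}(G-A))\leq (1/\mu)\,\mathrm{Tr}(G-A)$. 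Rearranging gives $\mathrm{Tr}(G-A)\geq \mu\,\sigma(A,G)$. Chaining the three bounds yields $\sigma(A,G)-\sigma(A,\bar G_+)\geq \mu\,\sigma(A,G)/(dL)$, which is exactly \eqref{lemma_BFGS_greedy_1}.

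The only real obstacle is the passage from the pointwise Rayleigh-quotient lower bound in Lemma~\ref{lemma_BFGS_general} to a multiplicative contraction on $\sigma(A,G)$: a naive attempt to bound $\max_i G_{ii}/A_{ii}$ by $\mathrm{Tr}(G)/(dL)$ and then by $\mu\,\mathrm{Tr}(A^{-1}G)/(dL)$ loses the additive term and fails (it would require $\mu\geq L$). Replacing $G$ by $G-A$ before trading traces (equivalently, using the mediant form $\mathrm{Tr}(G-A)/\mathrm{Tr}(A)$ rather than $\mathrm{Tr}(G)/(dL)-1$) is what makes the constants line up, and this is the step I would be most careful about.
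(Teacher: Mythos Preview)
Your argument is correct. The paper itself does not prove Lemma~\ref{lemma_BFGS_greedy}; it simply cites \cite{rodomanov2020greedy}, so there is no in-paper proof to compare against. Your line of reasoning---applying Lemma~\ref{lemma_BFGS_general} to the greedy coordinate, using the mediant bound $\max_i G_{ii}/A_{ii}-1\geq \mathrm{Tr}(G-A)/\mathrm{Tr}(A)$, and then controlling numerator and denominator via $\mathrm{Tr}(G-A)\geq \mu\,\sigma(A,G)$ and $\mathrm{Tr}(A)\leq dL$---is precisely the standard argument from the cited reference, and each step is justified as you wrote it (including the PSD trace inequality $\mathrm{Tr}(XY)\geq 0$ used to pass from $A^{-1}\preceq \mu^{-1}I$ to the trace bound). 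Your closing remark about why one must work with $G-A$ rather than $G$ alone is also on point.
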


This result shows that by following the Greedy-BFGS update the error of Hessian approximation, in terms of the metric $\sigma(.,.)$ defined in \eqref{sigma}, converges to zero linearly and eventually the sequence of Hessian approximations approaches the true Hessian. Note that, for the non-quadratic case, a similar argument holds, but the algorithm should be slightly modified as the computation of the average Hessian $J_t$ is costly and instead one might use the current Hessian $\nabla^2 f(x_t)$. We discuss this point in detail in the following section, when we present our Sharpened-BFGS method.

\section{Sharpened-BFGS}\label{sec:main}

In this section, we propose the Sharpened-BFGS algorithm which benefits from the update of BFGS for Newton direction approximation and the Greedy-BFGS update to approximate the Hessian matrix. In a nutshell, the update of Sharpened-BFGS first adjusts the Hessian approximation according to the BFGS update by setting $u=x_{t+1}-x_t$, and then improves the Hessian approximation by following the greedy update, and selecting the vector $u$ in a greedy fashion. To introduce our method, we first focus on a quadratic program where the Hessian is fixed. We then build on our intuition from the quadratic case to develop the general version of our method for the problem in \eqref{main_prob}.

\subsection{Quadratic Programming}\label{sec:quadratic}

Consider a special case of \eqref{main_prob} where the objective function is quadratic and given by
\begin{equation}\label{quadratic}
    \min_{x \in \mathbb{R}^d} f(x) = \frac{1}{2}x^\top A x + b^\top x,
\end{equation}
where $A \in \mathbb{R}^{d \times d}$ is a symmetric positive definite matrix satisfying $\mu I \preceq A \preceq LI$ and $b \in \mathbb{R}^d$. The Sharpened-BFGS algorithm applied to \eqref{quadratic} is shown in Algorithm~\ref{algo_quadratic}. We observe that the proposed algorithm involves two BFGS updates per iteration. Intuitively, we improve the Hessian approximation along the classical BFGS direction and subsequently along the Greedy-BFGS direction. Notice that the initial Hessian approximation matrix is $G_0 = LI$. Hence, the initial Hessian inverse approximation matrix is simply $H_0 = ({1}/{L})I$. For the quadratic problem, the sequence generated by Sharpened-BFGS converges to the optimal solution globally, as we show in Theorems~\ref{thm_quadratic_linear} and \ref{thm_quadratic_superlinear}. Hence, the initial point $x_0$ can be any vector in $\mathbb{R}^d$.

To formally show how Sharpened-BFGS exploits the fast properties of both BFGS and Greedy-BFGS, we first define the Newton decrement as $\lambda_{f}(x) := \sqrt{\nabla{f(x)}^\top \nabla^2 f(x)^{-1} \nabla{f(x)}}$. In our results, we report convergence in terms of $   \lambda_{f}(x)$ and we use the notation $\lambda_t := \lambda_{f}(x_t)$. We next state the following intermediate result  that shows for the class of quasi-Newton updates defined in \eqref{qn_method} (with step size $\eta=1$) on a quadratic program, how fast  $\lambda_{f}(x)$ converges to zero. The proof of this result can be found in \cite{rodomanov2020rates}.

\begin{algorithm}[t]
\caption{Sharpened-BFGS applied to \eqref{quadratic}.}\label{algo_quadratic}
\begin{algorithmic}[1] 
{\REQUIRE Initial point $x_0$ and initial matrix $G_0 = LI$.
\FOR {$t = 0,1,2,\ldots$}
    \STATE Update the variable: $x_{t + 1} = x_t - G_t^{-1} \nabla{f(x_t)}$;
    \STATE Compute  $s_t = x_{t + 1} - x_t$;
    \STATE Compute $\bar{G_t} = BFGS(A, G_t, s_t)$;
    \STATE Compute  $\bar{u} = \bar{u}(A, \bar{G_t})$ according to \eqref{greedy_vector};
    \STATE Compute $G_{t + 1} = BFGS(A, \bar{G_t}, \bar{u})$;
\ENDFOR}
\end{algorithmic}\end{algorithm}

\begin{lemma}\label{lemma_quadratic_theta}
Consider the quadratic function in \eqref{quadratic} and the sequence of iterates generated according to the update in \eqref{qn_method} with step size $\eta_t=1$. 
 Then, we have that
\begin{equation}\label{lemma_quadratic_theta_1}
    \lambda_{t+1} = \theta(A, G_t, x_{t + 1} - x_{t})\lambda_{t},
\end{equation}
where 
\begin{equation}\label{theta}
\theta(A,G,u) := \left(\frac{u^\top (G - A) A^{-1} (G - A) u}{u^\top G A^{-1} G u}\right)^{\frac{1}{2}}.
\end{equation}
\end{lemma}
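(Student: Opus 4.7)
The plan is to unpack the definitions and perform a direct computation using the quadratic structure. Since $f(x) = \frac12 x^\top A x + b^\top x$, we have $\nabla f(x) = Ax + b$ and $\nabla^2 f(x) = A$, so the Newton decrement simplifies to $\lambda_f(x)^2 = \nabla f(x)^\top A^{-1} \nabla f(x)$. Writing $g_t := \nabla f(x_t)$, the strategy is to express both $g_t$ and $g_{t+1}$ in terms of the step $s_t = x_{t+1} - x_t$, then form the ratio $\lambda_{t+1}^2 / \lambda_t^2$ and recognize it as $\theta(A,G_t,s_t)^2$.

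First I would use the update $x_{t+1} = x_t - G_t^{-1} g_t$, which rewrites as $s_t = -G_t^{-1} g_t$, equivalently $g_t = -G_t s_t$. Substituting this into $\lambda_t^2 = g_t^\top A^{-1} g_t$ immediately yields
\begin{equation*}
\lambda_t^2 = s_t^\top G_t A^{-1} G_t s_t,
\end{equation*}
which matches the denominator inside $\theta(A,G_t,s_t)^2$.

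Next I would compute $g_{t+1}$ in closed form. By the quadratic form of $\nabla f$,
\begin{equation*}
g_{t+1} = A x_{t+1} + b = A x_t + b + A s_t = g_t + A s_t = -G_t s_t + A s_t = -(G_t - A) s_t.
\end{equation*}
Plugging this into $\lambda_{t+1}^2 = g_{t+1}^\top A^{-1} g_{t+1}$ gives
\begin{equation*}
\lambda_{t+1}^2 = s_t^\top (G_t - A) A^{-1} (G_t - A) s_t,
\end{equation*}
which is exactly the numerator inside $\theta(A,G_t,s_t)^2$.

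Taking the ratio and extracting a square root then yields $\lambda_{t+1} = \theta(A, G_t, s_t)\lambda_t$, as claimed. The only subtlety to flag is that the square root is well defined and unambiguous: $\lambda_t, \lambda_{t+1} \geq 0$ by definition, and the ratio is the ratio of two quadratic forms in $A^{-1}$-like matrices which are nonnegative, so pulling out the square root preserves the sign. There is no real obstacle here — the lemma is essentially a restatement of the quadratic recursion $g_{t+1} = -(G_t - A)s_t$ through the lens of the Newton decrement — so the proof amounts to careful bookkeeping rather than any new estimate.
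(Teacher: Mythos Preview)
Your argument is correct: the direct computation expressing $g_t = -G_t s_t$ and $g_{t+1} = -(G_t - A)s_t$, then substituting into $\lambda^2 = g^\top A^{-1} g$, is exactly the standard derivation. The paper itself does not supply a proof but defers to \cite{rodomanov2020rates}, where the same identity is obtained by this very manipulation, so your approach coincides with the intended one.
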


First, note that $\theta(A,G,u)$ captures the closeness of $G$ and $A$ along the direction of $u$, where $u \in \mathbb{R}^{d}\backslash\{0\}$. The above result shows that the contraction factor for the convergence of the Newton decrement is related to the gap between $G_t(x_{t+1}-x_t)$ and $A(x_{t+1}-x_t)$. In the following theorem, we characterize a global upper bound on $\theta(A, G_t, x_{t + 1} - x_{t})  $ for the Sharpened-BFGS method.

\begin{theorem}\label{thm_quadratic_linear}
Consider the Sharpened-BFGS method in Algorithm~\ref{algo_quadratic} applied to the quadratic problem~\eqref{quadratic}. Then,
\begin{equation}\label{thm_quadratic_linear_1}
\theta(A, G_t, x_{t + 1} - x_{t})\leq 1 - \frac{\mu}{L}, \qquad \forall t \geq 0,
\end{equation}
and therefore
\begin{equation}\label{thm_quadratic_linear_2}
    \lambda_t \leq \left(1 - \frac{\mu}{L}\right)^{t}\lambda_0, \qquad \forall t \geq 0.
\end{equation}
\end{theorem}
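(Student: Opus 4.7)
The plan is to reduce \eqref{thm_quadratic_linear_2} to \eqref{thm_quadratic_linear_1} and then prove the latter via a spectral-invariant argument. Lemma~\ref{lemma_quadratic_theta} gives $\lambda_{t+1} = \theta(A, G_t, s_t)\lambda_t$ with $s_t := x_{t+1}-x_t$, so a one-line induction on $t$ turns \eqref{thm_quadratic_linear_1} into \eqref{thm_quadratic_linear_2}. I therefore focus entirely on the uniform bound $\theta(A, G_t, s_t) \leq 1 - \mu/L$ for every $t \geq 0$.

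First, I rewrite $\theta$ in whitened coordinates. Setting $M_t := A^{-1/2} G_t A^{-1/2}$ and $v := A^{1/2} s_t$, substitution into \eqref{theta} yields
\[
\theta(A, G_t, s_t)^2 \;=\; \frac{v^\top (M_t - I)^2 v}{v^\top M_t^2 v}.
\]
Writing the spectral decomposition $M_t = \sum_i \lambda_i w_i w_i^\top$, this expression is a convex combination of $(1 - 1/\lambda_i)^2$ with nonnegative weights $\pi_i \propto \lambda_i^2 (w_i^\top v)^2$. Consequently, if every eigenvalue of $M_t$ lies in $[1,\kappa]$ with $\kappa := L/\mu$ (equivalently, $A \preceq G_t \preceq \kappa A$), then $\theta(A, G_t, s_t) \leq 1 - 1/\kappa = 1 - \mu/L$, as required.

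It remains to establish the operator sandwich $A \preceq G_t \preceq \kappa A$ for all $t \geq 0$, which I prove by induction on $t$. The base case is immediate: $G_0 = LI \succeq A$ from $A \preceq LI$, and $\kappa A - G_0 = (L/\mu)(A - \mu I) \succeq 0$ from $A \succeq \mu I$. Each iteration of Algorithm~\ref{algo_quadratic} produces $G_{t+1}$ from $G_t$ by two applications of the $BFGS$ operator (first along $s_t$, then along the greedy direction $\bar u$), so it suffices to show that $G \mapsto BFGS(A, G, u)$ preserves both halves of the sandwich for an arbitrary $u \neq 0$. Preservation of the lower bound $A \preceq BFGS(A, G, u)$ is the classical hereditary property of BFGS. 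For the upper bound, which I expect to be the main technical obstacle, I would pass to whitened coordinates $\tilde M := A^{-1/2} G A^{-1/2}$ and $\tilde v := A^{1/2} u$ and rewrite the BFGS formula as
\[
\tilde M_+ \;=\; \tilde M^{1/2}\,\Pi\,\tilde M^{1/2} + \frac{\tilde v\,\tilde v^\top}{\tilde v^\top \tilde v},
\qquad \Pi := I - \frac{\tilde M^{1/2}\tilde v\,\tilde v^\top \tilde M^{1/2}}{\tilde v^\top \tilde M \tilde v},
\]
where $\Pi$ is the orthogonal projection onto $(\tilde M^{1/2}\tilde v)^\perp$. For any $w \in \mathbb{R}^d$, decomposing $w = \alpha \tilde v + w_\perp$ with $w_\perp \perp \tilde v$ and using $\Pi \tilde M^{1/2}\tilde v = 0$ gives
\[
w^\top \tilde M_+ w \;=\; \|\Pi \tilde M^{1/2} w_\perp\|^2 + \alpha^2\,\tilde v^\top \tilde v \;\leq\; w_\perp^\top \tilde M w_\perp + \alpha^2\,\tilde v^\top \tilde v \;\leq\; \kappa\,\|w\|^2,
\]
where the final inequality combines $\tilde M \preceq \kappa I$, $\kappa \geq 1$, and the Pythagorean identity $\|w\|^2 = \alpha^2\|\tilde v\|^2 + \|w_\perp\|^2$. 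This confirms $BFGS(A, G, u) \preceq \kappa A$ and closes the induction.

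Assembling the pieces: the invariant $A \preceq G_t \preceq \kappa A$ holds for every $t$, the spectral computation then delivers \eqref{thm_quadratic_linear_1}, and the opening reduction through Lemma~\ref{lemma_quadratic_theta} upgrades this to the linear rate \eqref{thm_quadratic_linear_2}. The hereditary upper bound above is the only step I expect to require real care; the rest is routine algebra.
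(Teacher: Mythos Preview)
Your proposal is correct and follows essentially the same strategy as the paper: establish the invariant $A \preceq G_t \preceq (L/\mu)A$ by induction (using that the BFGS operator preserves both sides of the sandwich), deduce $\theta(A,G_t,s_t)\le 1-\mu/L$ from the sandwich, and then invoke Lemma~\ref{lemma_quadratic_theta}. The only cosmetic differences are that the paper cites the sandwich-preservation property from \cite{rodomanov2020rates} (its Lemma~\ref{lemma_1}\eqref{lemma_1_1}) rather than reproving it, and bounds $\theta$ via the operator identity $(G-A)A^{-1}(G-A)=G(A^{-1}-G^{-1})A(A^{-1}-G^{-1})G$ together with $0\preceq A^{-1}-G_t^{-1}\preceq(1-\mu/L)A^{-1}$, which is equivalent to your whitened spectral computation.
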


\begin{proof}
Check Appendix~\ref{sec:proof_of_thm_quadratic_linear}.
\end{proof}

The above result shows that the iterates generated by Sharpened-BFGS converge to the solution at a linear rate of $1 - {\mu}/{L}$. However, this is not a tight bound and simply follows from the fact that eigenvalues of $G_t$ and $A$ are uniformly bounded. In the next lemma, we present that the sequence $\theta(A, G_t, x_{t + 1} - x_{t})$ eventually approaches zero and hence the iterates of Sharpened-BFGS converge superlinearly.

\begin{lemma}\label{lemma_quadratic_potential}
Consider Sharpened-BFGS in Algorithm~\ref{algo_quadratic} applied to the quadratic function \eqref{quadratic}. Further, define $\theta_t:=\theta(A, G_t, x_{t + 1} - x_{t})$ and $\sigma_t := \sigma(A, G_t)$. Then, 
\begin{equation}\label{lemma_quadratic_potential_1}
    \sigma_{t+1} \leq \left(1 - \frac{\mu}{dL}\right)\left(\sigma_t - \theta_t^2\right)
\end{equation}
for any $t \geq 0$. Moreover, we have 
\begin{equation}\label{lemma_quadratic_potential_2}
 \sum_{i = 0}^{t - 1}\frac{\theta^2_i}{(1 - \frac{\mu}{d L})^{i}}  \leq \sigma_0, \qquad \forall t \geq 1. 
\end{equation}
\end{lemma}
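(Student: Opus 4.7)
The plan is to exploit the fact that one iteration of Sharpened-BFGS consists of two consecutive BFGS operator updates: first with $u = s_t$ producing $\bar{G}_t$, then with $u = \bar{u}(A,\bar{G}_t)$ producing $G_{t+1}$. The second (greedy) update is already controlled by Lemma~\ref{lemma_BFGS_greedy}, giving $\sigma(A,G_{t+1}) \le (1-\mu/(dL))\,\sigma(A,\bar{G}_t)$, so the only new work is showing that the first (classical) BFGS update buys a drop of at least $\theta_t^2$ in $\sigma$, namely $\sigma(A,\bar{G}_t) \le \sigma_t - \theta_t^2$. I will also need the invariant $A \preceq G_t$ along the trajectory; since $G_0 = LI \succeq A$ and a standard property of the BFGS operator is that $A \preceq G$ implies $A \preceq G_+$, this is automatic by induction.

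The core technical step is the bound on the classical update. Rather than appealing directly to Lemma~\ref{lemma_BFGS_general}, which only yields the weaker decrement $u^\top G u / u^\top A u - 1$, I would compute the trace decrement exactly. A direct computation using the cyclic property of the trace gives
\begin{equation*}
\sigma(A,G) - \sigma(A,G_+) \;=\; \frac{u^\top G A^{-1} G u}{u^\top G u} - 1,
\end{equation*}
for any BFGS update along $u$. Setting $u = s_t$ and introducing the shorthand $a = s_t^\top A s_t$, $b = s_t^\top G_t s_t$, $c = s_t^\top G_t A^{-1} G_t s_t$, this decrement equals $(c-b)/b$, while unwinding the definition \eqref{theta} gives $\theta_t^2 = 1 - (2b - a)/c$. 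The desired inequality $(c-b)/b \ge 1 - (2b-a)/c$ simplifies algebraically to $(c-b)^2 + b(b-a) \ge 0$, which is manifest once I observe that $b \ge a$ (because $A \preceq G_t$) and that $c \ge 0$. This is where I expect the only real friction: being careful with the rearrangement and with verifying $b \ge a$ from the preserved invariant $A \preceq G_t$.

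Chaining the two bounds then yields the first claim: $\sigma_{t+1} \le (1-\mu/(dL))(\sigma_t - \theta_t^2)$. For the second claim I would set $\rho := 1 - \mu/(dL)$ and divide the recursion by $\rho^{t+1}$ to get $\sigma_{t+1}/\rho^{t+1} \le \sigma_t/\rho^t - \theta_t^2/\rho^t$. Telescoping $i = 0, \dots, t-1$ gives
\begin{equation*}
\sum_{i=0}^{t-1} \frac{\theta_i^2}{\rho^i} \;\le\; \sigma_0 - \frac{\sigma_t}{\rho^t} \;\le\; \sigma_0,
\end{equation*}
where the last step uses $\sigma_t \ge 0$, which follows from $A \preceq G_t$ (so $\mathrm{Tr}(A^{-1}G_t) \ge d$). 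This completes both statements with the algebraic inequality above as the only non-routine ingredient.
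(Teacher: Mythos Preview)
Your proposal is correct and follows the same overall structure as the paper's proof: establish the invariant $A\preceq G_t$ (and hence $A\preceq\bar G_t$) by induction, use the classical BFGS step to drop $\sigma$ by at least $\theta_t^2$, then apply Lemma~\ref{lemma_BFGS_greedy} to the greedy step, and finally telescope after dividing by $(1-\mu/(dL))^t$. The only difference is that for the decrement $\sigma(A,G_t)-\sigma(A,\bar G_t)\ge\theta_t^2$ the paper simply invokes \eqref{lemma_1_2} of Lemma~\ref{lemma_1} (borrowed from \cite{rodomanov2020rates}), whereas you reprove that inequality from scratch via the exact trace identity $\sigma(A,G)-\sigma(A,G_+)=\tfrac{u^\top GA^{-1}Gu}{u^\top Gu}-1$ and the elementary algebra $(c-b)^2+b(b-a)\ge 0$; your derivation is correct and self-contained, but not a genuinely different route.
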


\begin{proof}
Check Appendix~\ref{sec:proof_of_lemma_quadratic_potential}.
\end{proof}

First, note that \eqref{lemma_quadratic_potential_1} shows that in Sharpened-BFGS $\sigma_t$ converges to zero as in the Greedy-BFGS algorithm. Moreover comparing the bound in \eqref{lemma_quadratic_potential_1} with the one in \eqref{lemma_BFGS_greedy_1} shows that in Sharpened-BFGS $\sigma_t$ converges faster than Greedy-BFGS, as $\theta^2_t > 0$. Second, the result in \eqref{lemma_quadratic_potential_2} shows that the sequence $\theta_t$ converges to zero. Hence, one can leverage this result to show a tighter upper bound for  $\theta_t$ compared to the one in \eqref{thm_quadratic_linear_1} and show  a faster rate than the one in \eqref{thm_quadratic_linear_2} for the Sharpened-BFGS. This goal is accomplished in the following Theorem. 

\begin{theorem}\label{thm_quadratic_superlinear}
Consider Sharpened-BFGS described in Algorithm~\ref{algo_quadratic} applied to the quadratic function \eqref{quadratic}. Then, for $t\geq 1$ we have
\begin{equation}\label{thm_quadratic_superlinear_1}
    \lambda_t \leq \left(1 - \frac{\mu}{dL}\right)^{\frac{t(t - 1)}{4}} \left(\frac{dL}{t\mu}\right)^{\frac{t}{2}}\lambda_0.
\end{equation}
\end{theorem}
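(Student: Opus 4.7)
The plan is to reduce the theorem to a clean AM-GM estimate on the sequence $\theta_t$. First, by iterating the one-step contraction in Lemma~\ref{lemma_quadratic_theta}, I would write
\begin{equation*}
\lambda_t = \lambda_0 \prod_{i=0}^{t-1} \theta_i,
\end{equation*}
so the whole task reduces to upper bounding the product $\prod_{i=0}^{t-1}\theta_i$. The key tool for this is the summability estimate \eqref{lemma_quadratic_potential_2} from Lemma~\ref{lemma_quadratic_potential}.

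Next, I would apply the (weighted) AM--GM inequality to the weighted squares $q^{-i}\theta_i^2$, where $q := 1 - \tfrac{\mu}{dL}$. Writing $q^{-i}\theta_i^2$ as $t$ nonnegative terms, AM--GM gives
\begin{equation*}
\Bigl(\prod_{i=0}^{t-1} q^{-i}\theta_i^2\Bigr)^{1/t} \le \frac{1}{t}\sum_{i=0}^{t-1} \frac{\theta_i^2}{(1-\tfrac{\mu}{dL})^i} \le \frac{\sigma_0}{t},
\end{equation*}
where the last inequality uses \eqref{lemma_quadratic_potential_2}. Since $\sum_{i=0}^{t-1} i = t(t-1)/2$, rearranging yields
\begin{equation*}
\prod_{i=0}^{t-1}\theta_i^2 \le \Bigl(1-\tfrac{\mu}{dL}\Bigr)^{t(t-1)/2} \Bigl(\tfrac{\sigma_0}{t}\Bigr)^t,
\end{equation*}
and hence $\prod_{i=0}^{t-1}\theta_i \le \bigl(1-\tfrac{\mu}{dL}\bigr)^{t(t-1)/4}(\sigma_0/t)^{t/2}$.

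Finally, I need a clean bound on the initial potential $\sigma_0$. Since $G_0 = LI$ and $\mu I \preceq A$, we have $\sigma_0 = L\,\mathrm{Tr}(A^{-1}) - d \le dL/\mu - d \le dL/\mu$. Substituting this into the previous display and multiplying by $\lambda_0$ delivers the claimed bound
\begin{equation*}
\lambda_t \le \Bigl(1-\tfrac{\mu}{dL}\Bigr)^{t(t-1)/4}\Bigl(\tfrac{dL}{t\mu}\Bigr)^{t/2}\lambda_0.
\end{equation*}
The only slightly nontrivial step is recognizing that weighting the $\theta_i^2$ by the inverse geometric factors $q^{-i}$ is exactly what makes AM--GM produce the extra superlinear factor $(1-\mu/(dL))^{t(t-1)/4}$; the rest is bookkeeping with $\sum i = t(t-1)/2$ and the initial trace estimate.
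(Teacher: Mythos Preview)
Your proof is correct and matches the paper's argument essentially step for step: the paper also writes $\lambda_t/\lambda_0=\prod_{i=0}^{t-1}\theta_i$, applies AM--GM to the weighted terms $\theta_i^2/(1-\mu/(dL))^i$ together with the summability bound \eqref{lemma_quadratic_potential_2}, and closes with the same trace estimate $\sigma_0\le dL/\mu$. The only cosmetic difference is that the paper factors out the geometric weights before applying AM--GM whereas you factor them out after, which is the same computation.
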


\begin{proof}
Check Appendix~\ref{sec:proof_of_thm_quadratic_superlinear}.
\end{proof}

If we analyze the superlinear convergence rate in \eqref{thm_quadratic_superlinear_1}, we observe that there are two terms that contribute to the  rate. The first is the quadratic rate $(1 - \frac{\mu}{dL})^{\frac{t(t - 1)}{4}}$ and the second is $(\frac{dL}{t\mu})^{\frac{t}{2}}$. Notice that for the second term $(\frac{dL}{t\mu})^{\frac{t}{2}}$, the superlinear convergence kicks in only after $t \geq d\frac{L}{\mu}$. Hence, by combining the results of Theorem~\ref{thm_quadratic_linear} and \ref{thm_quadratic_superlinear}, we obtain that during the initial iterations $t<d\frac{L}{\mu}$ Sharpened-BFGS converges linearly and for $t>d\frac{L}{\mu}$ the rate becomes faster than quadratic rate and $\lambda_t$ approaches zero at a rate of  $\mathcal{O}((1-\frac{\mu}{dL})^{t^2} (\frac{dL}{\mu t})^t)$.

\subsection{General Strongly-Convex and Smooth Setting}\label{sec:general}

\begin{algorithm}[t]
\caption{General Sharpened-BFGS }\label{algo_general} 
\begin{algorithmic}[1] 
{\REQUIRE Initial point $x_0$ and initial matrix $G_0 = LI$.
\FOR {$t = 0,1,2,\ldots$}
    \STATE Update $x_{t + 1} = x_t - G_t^{-1} \nabla{f(x_t)}$;
    \STATE Compute  $s_t = x_{t + 1} - x_t$;
    \STATE Set  $J_t = \int_{0}^{1}\nabla^2{f(x_t + \tau s_t)}d\tau$;
    \STATE Compute $\bar{G_t} = BFGS(J_t, G_t, s_t)$;
    \STATE Compute $r_t = \|x_{t + 1} - x_{t}\|_{x_t}$;
    \STATE Compute  $\hat{G_t} = (1 + {Mr_t}/{2})^2\bar{G_t}$;
    \STATE Compute  $\bar{u} = \bar{u}(\nabla^2{f(x_{t + 1})}, \hat{G_t})$ according to \eqref{greedy_vector};
    \STATE Compute $G_{t + 1} = BFGS(\nabla^2{f(x_{t + 1})}, \hat{G_t}, \bar{u})$;
\ENDFOR}
\end{algorithmic}\end{algorithm}

In this section, we extend our algorithm and its analysis to non-quadratic convex programs. To do so, We first state the required assumptions on the objective function to establish the superlinear convergence rate of Sharpened-BFGS.

\begin{assumption}\label{ass_str_cvx_smooth}
The objective function $f$ is twice differentiable. It is strongly convex with parameter $\mu > 0$ and its gradient $\nabla f$ is Lipschitz continuous with parameter $L > 0$.
\end{assumption}

\begin{assumption}\label{ass_str_concordant}
The objective function $f$ is strongly self-concordant with $M > 0$, i.e., for any $ x, y, z, w \in \mathbb{R}^{d}$, we have $\nabla^{2}{f(y)} - \nabla^{2}{f(x)} \preceq M\|y - x\|_{z}\nabla^{2}{f(w)}$, where 
$$
\|y - x\|_{z} : = \sqrt{(y - x)^\top \nabla^2{f(z)} (y - x)}.
$$
\end{assumption}

The strongly self-concordant functions form a subclass of the famous self-concordant functions class introduced in \citep{nesterov1989self,nesterov1994interior}, which plays a fundamental role in the local analysis of Newton's method. The concept of strong self-concordance was first proposed by \citet{rodomanov2020greedy} to establish the explicit quadratic convergence rate of the greedy QN method. Note that a strongly convex function with Lipschitz continuous Hessian is strongly self-concordant; see Example 4.1 in \cite{rodomanov2020greedy}. 

The general Sharpened-BFGS method is presented in Algorithm~\ref{algo_general}. We observe that Algorithm~\ref{algo_general} is fundamentally similar to the Algorithm~\ref{algo_quadratic} for the quadratic case, but there are still some differences between them. In general, similar to Algorithm~\ref{algo_quadratic}, we first update the Hessian approximation matrix along the standard BFGS direction and then along the Greedy-BFGS direction. The only difference between Algorithm~\ref{algo_quadratic} and \ref{algo_general} is that we add the correction term $r_t = \|x_{t + 1} - x_{t}\|_{x_t}$ in Steps $6$ and $7$ of Algorithm~\ref{algo_general}. The reason for this modification is that the trace potential function $\sigma(A, G)$ is only well-defined under the condition $A \preceq G$. Suppose currently the condition $\nabla^2{f(x)} \preceq G$ holds. We add the correction term to ensure that after one BFGS update, the new point $x_+$ and the new Hessian approximation matrix $G_+$ still satisfy the condition $\nabla^2{f(x_+)} \preceq G_+$. Since the Hessian of the general convex function is not fixed, there is no guarantee that the quasi-Newton update can preserve the property of $\nabla^2{f(x)} \preceq G$ without that correction term. The initial Hessian approximation matrix is still $G_0 = LI$. We should also add that Step $4$ does not require computing $J_t = \int_{0}^{1}\nabla^2{f(x_t + \tau s_t)}d\tau$ explicitly. As we discussed in Section~\ref{sec:standard_BFGS}, we can compute the standard BFGS update in Step $5$ according to \eqref{BFGS_standard_update}.

\begin{remark}\label{remark_5}
The computational cost per iteration of Algorithm~\ref{algo_quadratic} is $\mathcal{O}(d^2)$. The difference between Algorithm~\ref{algo_general} and \ref{algo_quadratic} is in Steps $6$ and $7$ of Algorithm~\ref{algo_general}. The computational cost of calculating the vector $r_t = \|x_{t + 1} - x_{t}\|_{x_t}$ and the matrix $\hat{G_t} = (1 + {Mr_t}/{2})^2\bar{G_t}$ is also $\mathcal{O}(d^2)$. Hence, the computational cost per iteration of Algorithm~\ref{algo_general} is also $\mathcal{O}(d^2)$.
\end{remark}

The convergence rate analysis of the Sharpened-BFGS method is inspired by the counterpart of the quadratic function, but there are still some differences between these two analyses as we need to take into account the variation of the Hessian for the non-quadratic case. Most importantly, for the general (non-quadratic) case, we can only obtain  local convergence results as we state in  Theorems~\ref{thm_general_linear} and \ref{thm_general_superlinear}. In other words, the initial point $x_0$ should be within a local neighborhood of the optimal solution $x_*$ to guarantee the  convergence of Sharpened-BFGS.
Similar to Section~\ref{sec:quadratic}, we first establish the relationship between $\theta$ defined in \eqref{theta} and the Newton decrement $\lambda_f(x)$ after one iteration of quasi-Newton update for minimizing a general convex function. The proof can be found in \cite{rodomanov2020rates}.

\begin{lemma}\label{lemma_general_theta}
Consider problem \eqref{main_prob} and suppose Assumptions~\ref{ass_str_cvx_smooth}--\ref{ass_str_concordant} are satisfied. Then, the iterates $x_t$ generated according to the update in \eqref{qn_method} with step size $\eta_t=1$ satisfy
\begin{equation}\label{lemma_general_theta_1}
    \lambda_{t+1} \leq \left(1 + \frac{Mr_t}{2}\right)\theta(J_t, G_t, x_{t + 1} - x_{t})\lambda_t,
\end{equation}
where $J_t := \int_{0}^{1}\nabla^2{f(x_t + \tau(x_{t + 1} - x_{t}))d\tau}$ and $r_t := \|x_{t+1} - x_t\|_{x_t}$. 
\end{lemma}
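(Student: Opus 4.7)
The plan is to express both Newton decrements $\lambda_{t+1}^2$ and $\lambda_t^2$ as quadratic forms in $s_t$ whose middle matrices are $\nabla^2 f(x_{t+1})^{-1}$ and $\nabla^2 f(x_t)^{-1}$, then reroute both through the averaged Hessian $J_t$ so that $\theta(J_t,G_t,s_t)^2$ falls out directly from its definition in \eqref{theta}. First I would apply the fundamental theorem of calculus, $\nabla f(x_{t+1}) - \nabla f(x_t) = J_t s_t$, together with the quasi-Newton identity $\nabla f(x_t) = -G_t s_t$ (using $\eta_t = 1$), to obtain the key relation $\nabla f(x_{t+1}) = (J_t - G_t)\,s_t$. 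Substituting into the definition of the Newton decrement gives
\[
\lambda_{t+1}^2 \;=\; s_t^{\top}(J_t - G_t)\,\nabla^2 f(x_{t+1})^{-1}\,(J_t - G_t)\,s_t, \qquad \lambda_t^2 \;=\; s_t^{\top}G_t\,\nabla^2 f(x_t)^{-1}\,G_t\,s_t.
\]

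The next step is to pay a small multiplicative price to swap $\nabla^2 f(x_{t+1})^{-1}$ for $J_t^{-1}$ in the first expression, at which point the definition \eqref{theta} immediately factors it as $\theta(J_t,G_t,s_t)^2 \cdot s_t^{\top} G_t J_t^{-1} G_t s_t$; a second price then swaps the trailing $J_t^{-1}$ for $\nabla^2 f(x_t)^{-1}$, turning the tail into $\lambda_t^2$. Concretely, I expect the two sandwich bounds
\[
\nabla^2 f(x_{t+1})^{-1} \;\preceq\; \bigl(1 + M r_t/2\bigr)\,J_t^{-1}, \qquad J_t^{-1} \;\preceq\; \bigl(1 + M r_t/2\bigr)\,\nabla^2 f(x_t)^{-1},
\]
which together yield $\lambda_{t+1}^2 \leq (1 + Mr_t/2)^2\,\theta(J_t,G_t,s_t)^2\,\lambda_t^2$, and the claim follows by taking square roots.

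The real work is extracting each price as exactly $(1 + Mr_t/2)$ from Assumption~\ref{ass_str_concordant}. For the upper bound I would pick $y = x_t + \tau s_t$, $x = w = x_{t+1}$, $z = x_t$ in the strong self-concordance inequality, giving $\nabla^2 f(x_t + \tau s_t) \preceq (1 + M(1-\tau) r_t)\,\nabla^2 f(x_{t+1})$; integrating $\tau$ over $[0,1]$ produces $J_t \preceq (1 + Mr_t/2)\,\nabla^2 f(x_{t+1})$, which inverts to the first sandwich bound. For the lower bound, the symmetric version of strong self-concordance with $w$ set to the intermediate point $x_t + \tau s_t$ yields $\nabla^2 f(x_t + \tau s_t) \succeq (1 + M\tau r_t)^{-1}\,\nabla^2 f(x_t)$; integrating gives $J_t \succeq \bigl(\ln(1+Mr_t)/(Mr_t)\bigr)\,\nabla^2 f(x_t)$, and the elementary scalar inequality $\ln(1+x)/x \geq 1/(1 + x/2)$ for $x \geq 0$ upgrades this to the second sandwich bound. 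The main obstacle is precisely this lower bound: a careless choice of the free variables $z,w$ in Assumption~\ref{ass_str_concordant} gives only the weaker factor $(1 + Mr_t)^{-1}$, so one must use the symmetric direction and invoke the reciprocal-log inequality to recover the sharp $(1 + Mr_t/2)^{-1}$ that matches the upper bound and produces the clean squared factor in \eqref{lemma_general_theta_1}.
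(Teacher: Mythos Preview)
Your proposal is correct and follows essentially the same route as the paper (which defers to \cite{rodomanov2020rates} for this lemma): write $\nabla f(x_{t+1})=(J_t-G_t)s_t$, express both $\lambda_{t+1}^2$ and $\lambda_t^2$ as quadratic forms in $s_t$, and use the two-sided sandwich bounds between $J_t$ and $\nabla^2 f(x_t)$, $\nabla^2 f(x_{t+1})$ to pass through $J_t^{-1}$ and read off $\theta(J_t,G_t,s_t)^2$. The sandwich bounds you derive are exactly the content of the paper's Lemma~\ref{lemma_2} (equations~\eqref{lemma_2_2}--\eqref{lemma_2_3}), so rather than re-deriving them via the $\ln(1+x)/x\ge 1/(1+x/2)$ argument you can simply cite that lemma; but your direct derivation is also fine and the scalar inequality you invoke is valid.
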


Notice that the above lemma is in parallel to Lemma~\ref{lemma_quadratic_theta} for the quadratic case. Now, we establish a local upper bound for the measurement $\theta(J_t, G_t, x_{t + 1} - x_{t})$ and prove the local linear convergence rate of the general Sharpened-BFGS method, which is similar to the results in Theorem~\ref{thm_quadratic_linear}.

\begin{theorem}\label{thm_general_linear}
Consider Sharpened-BFGS in Algorithm~\ref{algo_general} applied to the objective function $f$ satisfying Assumption~\ref{ass_str_cvx_smooth} and \ref{ass_str_concordant}. Moreover, suppose that the initial point $x_0$ satisfies
\begin{equation}\label{thm_general_linear_1}
\lambda_0 \leq \frac{C_0\mu}{ML}, 
\end{equation}
where  $C_0 = \frac{1}{4}\ln{\frac{3}{2}}$.
Then, for any $t \geq 0$ we have
\begin{equation}\label{thm_general_linear_2}
    \theta(J_t, G_t, x_{t + 1} - x_{t}) \leq 1 - \frac{2\mu}{3L},
\end{equation}
which leads to
\begin{equation}\label{thm_general_linear_3}
    \lambda_t \leq (1 - \frac{\mu}{2L})^{t}\lambda_0.
\end{equation}
\end{theorem}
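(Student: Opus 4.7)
The plan is to mirror the quadratic-case proof of Theorem~\ref{thm_quadratic_linear}, handling the varying Hessian via strong self-concordance and obtaining local convergence via a strong induction on $t$ that keeps $\lambda_t$ inside the neighborhood specified by \eqref{thm_general_linear_1}. Concretely, I would prove three invariants simultaneously: (I1) $\nabla^{2} f(x_t) \preceq G_t$ and $J_t \preceq G_t$, so that the trace potential is well-defined; (I2) the uniform upper bound $G_t \preceq \tfrac{3}{2} L\, I$, which delivers the stated $\theta$-bound; and (I3) the contraction $\lambda_t \leq (1 - \mu/(2L))^t \lambda_0$, which keeps the iterates local. The base case $t=0$ is immediate from $G_0 = L I$ and $\nabla^{2} f(x_0) \preceq L I$.

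For the inductive step, the first observation is that (I1) implies the estimate $r_t \leq \lambda_t$: from $s_t = -G_t^{-1}\nabla f(x_t)$ and $\nabla^{2} f(x_t) \preceq G_t$, one gets $r_t^2 = \nabla f(x_t)^\top G_t^{-1} \nabla^{2} f(x_t) G_t^{-1} \nabla f(x_t) \leq \nabla f(x_t)^\top \nabla^{2} f(x_t)^{-1} \nabla f(x_t) = \lambda_t^2$. Then, using $\mu I \preceq J_t$ together with (I2), simultaneous diagonalization in the expression \eqref{theta} yields the elementary bound $\theta_t \leq 1 - 1/\lambda_{\max}(J_t^{-1} G_t) \leq 1 - 2\mu/(3L)$, which is the first conclusion of the theorem. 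Plugging this into Lemma~\ref{lemma_general_theta} and using that $M r_t \leq C_0\,\mu/L$ with the calibration $C_0 = \tfrac{1}{4}\ln(3/2)$ being small enough to ensure $(1 + Mr_t/2)(1 - 2\mu/(3L)) \leq 1 - \mu/(2L)$, one obtains (I3) at time $t+1$. Re-establishing (I1) at $t+1$ is straightforward: BFGS preserves the PSD ordering, so $J_t \preceq \bar{G}_t$; strong self-concordance provides $\nabla^{2} f(x_{t+1}) \preceq (1 + Mr_t/2)^2 J_t \preceq \hat{G}_t$ (this is precisely the reason for the correction in Step~7 of Algorithm~\ref{algo_general}); and the greedy BFGS step again preserves the ordering to yield $\nabla^{2} f(x_{t+1}) \preceq G_{t+1}$.

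The main obstacle is invariant (I2), since neither BFGS nor the multiplicative correction $(1 + Mr_t/2)^2$ is monotone in $\lambda_{\max}(G)$, so a direct step-by-step argument fails. I would control $G_t$ indirectly via the trace potential $\sigma_t := \sigma(\nabla^{2} f(x_t), G_t)$: because $\nabla^{2} f(x_t) \preceq G_t$, one has $\lambda_{\max}(\nabla^{2} f(x_t)^{-1} G_t) \leq 1 + \sigma_t$, hence $G_t \preceq (1 + \sigma_t)\, L I$, and it suffices to keep $\sigma_t \leq 1/2$ along the entire trajectory. Combining Lemma~\ref{lemma_BFGS_general} with the two-sided self-concordance estimates relating $\nabla^{2} f(x_{t+1})$, $J_t$, and $\nabla^{2} f(x_t)$ yields a recursion of the form $\sigma_{t+1} \leq (1 + c\, Mr_t)\sigma_t + c'\, Mr_t$ for absolute constants $c, c'$; telescoping this, together with the bound $\sum_t r_t \leq 2L\lambda_0/\mu$ coming from the geometric decay in (I3), is precisely what the hypothesis $\lambda_0 \leq \tfrac{1}{4}\ln(3/2)\,\mu/(ML)$ is calibrated for, so $\sigma_t \leq 1/2$ holds uniformly and the induction closes.
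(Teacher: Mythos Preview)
There is a genuine gap in your handling of invariant (I2). You propose to control $G_t$ via the trace potential and assert that ``it suffices to keep $\sigma_t \leq 1/2$ along the entire trajectory.'' But this fails already at $t=0$: with $G_0 = LI$ and $\nabla^2 f(x_0) \succeq \mu I$, one has
\[
\sigma_0 \;=\; \mathrm{Tr}\bigl(\nabla^2 f(x_0)^{-1} L I\bigr) - d \;\leq\; d\Bigl(\tfrac{L}{\mu}-1\Bigr),
\]
and this upper bound is attained when the Hessian has an eigenvalue near $\mu$, so $\sigma_0$ is generically of order $d\kappa$, not $1/2$. Your telescoped recursion $\sigma_{t+1} \leq (1+cMr_t)\sigma_t + c'Mr_t$ then gives only $\sigma_t \leq e^{cM\sum r_i}\sigma_0 + O(M\!\sum r_i)$, which is essentially $\sigma_0$ up to a constant factor; even if you insert the greedy contraction $(1-\tfrac{\mu}{dL})$, it takes $\Theta(d\kappa\ln(d\kappa))$ steps before $\sigma_t$ drops below $1/2$. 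Hence the induction cannot close through $\sigma_t \leq 1/2$, and the absolute bound $G_t \preceq \tfrac{3}{2}LI$ is never established.

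The fix---and this is what the paper does---is to carry the unavoidable factor $L/\mu$ in the upper bound from the start. One maintains
\[
\nabla^2 f(x_t) \preceq G_t \preceq \xi_t\,\tfrac{L}{\mu}\,\nabla^2 f(x_t), \qquad \xi_t := e^{2M\sum_{i<t} r_i},
\]
which holds at $t=0$ with $\xi_0=1$ because $G_0 = LI \preceq \tfrac{L}{\mu}\nabla^2 f(x_0)$. The point is that \eqref{lemma_1_1} preserves the two-sided bound $\tfrac{1}{\xi}A \preceq G \preceq \eta A$ through each BFGS step (both the secant step with $A=J_t$ and the greedy step with $A=\nabla^2 f(x_{t+1})$), so the ratio $L/\mu$ persists automatically; only the drift $\xi_t$ coming from the self-concordance corrections and the scaling in Step~7 needs to be controlled, and that is exactly what $C_0 = \tfrac{1}{4}\ln\tfrac{3}{2}$ is calibrated for, yielding $\xi_t \leq \tfrac{3}{2}$. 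Plugging $\eta = \xi_t L/\mu$ into the $\theta$-bound (your simultaneous-diagonalization argument, or the paper's Lemma~A.3) then gives $\theta_t \leq 1 - \tfrac{1}{\xi_t}\tfrac{\mu}{L}(1-\tfrac{M\lambda_t}{2}) \leq 1 - \tfrac{2\mu}{3L}$, and the rest of your argument (the contraction of $\lambda_t$ and the propagation of the lower bound via Step~7) goes through as you outlined.
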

\vspace{-2mm}
\begin{proof}
Check Appendix~\ref{sec:proof_of_thm_general_linear}.
\end{proof}

The above theorem presents that in a local neighborhood of the optimal solution, the iterates generated by Sharpened-BFGS achieve a linear convergence rate of $1 - {\mu}/{2L}$, which is obtained by a loose bound on $\theta_t$. As mentioned in Section~\ref{sec:quadratic}, our ultimate target is to improve this to the superlinear rate. In the following lemma, we establish inequalities similar to \eqref{lemma_quadratic_potential_1} and \eqref{lemma_quadratic_potential_2} to establish a superlinear convergence rate for Sharpened-BFGS.

\begin{lemma}\label{lemma_general_potential}
Consider Sharpened-BFGS in Algorithm~\ref{algo_general} applied to the objective function $f$ satisfying Assumptions~\ref{ass_str_cvx_smooth} and \ref{ass_str_concordant}. Moreover, suppose that the initial point $x_0$ satisfies 
\begin{equation}\label{lemma_general_potential_1}
\lambda_0 \leq \frac{C_0\mu}{ML}, 
\end{equation}
where $ C_0 = \frac{1}{4}\ln{\frac{3}{2}}$.
Further, consider the definitions  $\theta_t := \theta(\nabla^2{f(x_{t})}, G_t, x_{t + 1} - x_{t})$ and $\sigma_t := \sigma(\nabla^2{f(x_{t})}, G_{t})$. Then, for any $t \geq 0$ it holds that
\begin{equation}\label{lemma_general_potential_2}
    \sigma_{t+1} \leq (1 - \frac{\mu}{2dL})\left[(1 + \frac{M\lambda_t}{2})^4(\sigma_t + 4Md\lambda_t) - \frac{1}{4}\theta^2_t\right].
\end{equation}
Moreover, we have 
\begin{equation}\label{lemma_general_potential_3}
    \sum_{i = 0}^{t - 1}\frac{\theta^2_i}{(1 - \frac{\mu}{2d L})^{i}}  \leq 8(\sigma_0 + 4Md\lambda_0), \qquad \forall t \geq 1. 
\end{equation}
\end{lemma}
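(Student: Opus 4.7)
The plan is to run the argument of Lemma~\ref{lemma_quadratic_potential} for the quadratic case, but to insert strong self-concordance corrections at every step. The key conceptual complication compared to the quadratic setting is that three different reference Hessians appear inside the updates: $\nabla^{2}f(x_t)$ is what defines $\sigma_t$ and $\theta_t$, the averaged Hessian $J_t$ is the matrix that the first BFGS update $\bar G_t = \mathrm{BFGS}(J_t, G_t, s_t)$ approximates, and $\nabla^{2}f(x_{t+1})$ is the target of the greedy step that yields $G_{t+1}$. Assumption~\ref{ass_str_concordant} relates these three matrices multiplicatively by factors of the form $1 + M r_t/2$, and these factors must be propagated through the lemmas without losing cleanness. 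Throughout, I would also carry the inductive invariant $\nabla^{2}f(x_t) \preceq G_t$, which is needed both for $\sigma_t$ to be a valid potential and for Lemmas~\ref{lemma_BFGS_general} and \ref{lemma_BFGS_greedy} to apply; the correction $\hat G_t = (1+Mr_t/2)^2 \bar G_t$ in Step~7 of Algorithm~\ref{algo_general} is precisely what preserves this invariant from step $t$ to $t+1$.

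For \eqref{lemma_general_potential_2}, I would first apply Lemma~\ref{lemma_BFGS_greedy} to the greedy step, obtaining $\sigma_{t+1} \leq (1 - \mu/(2dL))\,\sigma(\nabla^{2}f(x_{t+1}), \hat G_t)$, where the $2L$ in place of $L$ absorbs the self-concordant blow-up of the smoothness constant near $x_{t+1}$. Next, since $\hat G_t$ is $\bar G_t$ scaled by $(1+Mr_t/2)^2$, and $\nabla^{2}f(x_{t+1})^{-1} \preceq (1+Mr_t/2)^{2} J_t^{-1}$ by Assumption~\ref{ass_str_concordant}, a direct trace computation yields $\sigma(\nabla^{2}f(x_{t+1}), \hat G_t) \leq (1+Mr_t/2)^{4}\big(\sigma(J_t, \bar G_t) + d\big) - d$. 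For the inner BFGS step the same identity used in the proof of Lemma~\ref{lemma_quadratic_potential} (expand the trace and apply Cauchy--Schwarz to relate $s_t^{\top} G_t J_t^{-1} G_t s_t / s_t^{\top} G_t s_t - 1$ to $\theta(J_t, G_t, s_t)^{2}$) gives $\sigma(J_t, \bar G_t) \leq \sigma(J_t, G_t) - \theta(J_t, G_t, s_t)^{2}$. Finally, a second use of strong self-concordance swaps $J_t$ for $\nabla^{2}f(x_t)$ in both $\sigma$ and $\theta$, picking up further $(1+Mr_t/2)^{\bullet}$ factors and additive terms of order $Md\lambda_t$. Collecting all of these and using $r_t \leq \lambda_t$ (which follows from $\nabla^{2}f(x_t) \preceq G_t$ together with the definition of the Newton decrement) consolidates everything into the displayed $(1+M\lambda_t/2)^{4}(\sigma_t + 4Md\lambda_t) - \tfrac{1}{4}\theta_t^{2}$.

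For \eqref{lemma_general_potential_3}, the strategy is the telescoping trick from the quadratic case. Rearranging \eqref{lemma_general_potential_2} gives
\[
\tfrac{1}{4}\theta_t^{2} \leq (1+M\lambda_t/2)^{4}(\sigma_t + 4Md\lambda_t) - \frac{\sigma_{t+1}}{1 - \mu/(2dL)}.
\]
Dividing by $(1 - \mu/(2dL))^{i}$ and summing $i = 0$ to $t-1$ produces a telescoping sum on the right-hand side. To close it, I would invoke Theorem~\ref{thm_general_linear} (which applies under exactly the hypothesis \eqref{lemma_general_potential_1}) to get $\lambda_i \leq (1 - \mu/(2L))^{i}\lambda_0$; since this linear decay is strictly faster than the geometric growth of $(1-\mu/(2dL))^{-i}$, the products $\prod_{i}(1+M\lambda_i/2)^{4}$ and sums $\sum_i Md\lambda_i (1-\mu/(2dL))^{-i}$ stay bounded by absolute constants times $\sigma_0 + 4Md\lambda_0$. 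The constant~$8$ appears from combining the factor of $4$ that removes $\tfrac14$ from $\theta_t^{2}$ with the factor of $2$ coming from the bounded product of self-concordance correction terms.

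I expect the main obstacle to lie in the self-concordance bookkeeping in the first inequality: each swap between $\nabla^{2}f(x_t)$, $J_t$, and $\nabla^{2}f(x_{t+1})$ introduces multiplicative distortions that appear differently in the numerator and denominator of $\theta$ and in the linear-in-$G$ and nonlinear-in-$A$ parts of $\sigma$, and the delicate point is verifying that all these distortions consolidate into exactly $(1+M\lambda_t/2)^{4}$ with the clean coefficient $\tfrac14$ on $\theta_t^{2}$, rather than producing a messier polynomial in $M\lambda_t$ that would prevent the telescoping sum in the second part from closing cleanly. A subordinate but essential obstacle is the inductive verification that $\nabla^{2}f(x_{t+1}) \preceq \hat G_t$ at every iteration, which is where the specific form $(1+Mr_t/2)^{2}$ of the correction factor (as opposed to any other constant of comparable size) is forced by the proof.
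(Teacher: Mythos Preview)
Your overall strategy matches the paper's closely, but there is a genuine gap in the step handling the inner BFGS update. You write that ``the same identity used in the proof of Lemma~\ref{lemma_quadratic_potential}\ldots gives $\sigma(J_t, \bar G_t) \leq \sigma(J_t, G_t) - \theta(J_t, G_t, s_t)^2$.'' This is inequality~\eqref{lemma_1_2} of Lemma~\ref{lemma_1}, and it requires $J_t \preceq G_t$. That ordering does \emph{not} hold: the inductive invariant is only $\nabla^{2}f(x_t) \preceq G_t$, and strong self-concordance gives $J_t \preceq (1 + Mr_t/2)\nabla^{2}f(x_t)$, so the best available relation is $\tfrac{1}{\rho_t} J_t \preceq G_t$ with $\rho_t = 1 + M\lambda_t/2$. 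A direct trace computation reduces the desired inequality to $(p-q)^2 + q(q-s) \geq 0$ with $p = s_t^\top G_t J_t^{-1} G_t s_t$, $q = s_t^\top G_t s_t$, $s = s_t^\top J_t s_t$; the sign of $q - s$ is precisely the condition $J_t \preceq G_t$ along $s_t$, which can fail. The paper instead invokes the weaker inequality~\eqref{lemma_1_3}, valid under $\tfrac{1}{\xi}J_t \preceq G_t$ and $\theta \leq \xi$ (both verified with $\xi = \rho_t$), yielding $\sigma(J_t, G_t) - \sigma(J_t, \bar G_t) \geq \tfrac{1}{4\rho_t^2}\theta^2 - \ln\rho_t$. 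This is the true origin of the coefficient $\tfrac14$ on $\theta_t^2$---not, as you speculate in your last paragraph, a by-product of consolidating self-concordance distortions---and the extra $-\ln\rho_t$ is one of the terms absorbed into $4Md\lambda_t$.

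For \eqref{lemma_general_potential_3}, your telescoping outline is in the right spirit, though the paper organizes it differently: rather than separately bounding $\sum_i Md\lambda_i(1-c)^{-i}$, it sets $\alpha_t = \sigma_t + 4Md\lambda_t$, first proves the auxiliary bound $\lambda_{t} \leq \rho_{t-1}\alpha_{t-1}\lambda_{t-1}$ (via $\theta_t \leq \sigma_t + \tfrac{M\lambda_t}{2}$, which comes from $G_t \preceq (1+\sigma_t)\nabla^2 f(x_t)$ and Lemma~\ref{lemma_3}), and folds this into the recursion to get $\alpha_t \leq (1-c)\beta_{t-1}\alpha_{t-1} - \tfrac14(1-c)\theta_{t-1}^2$ with $\beta_t = \rho_t^4(1+8Md\lambda_t)$. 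Unrolling produces a single product $\prod_j \beta_j \leq 2$ on the right, which is where the factor of $8$ comes from.
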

\vspace{-2mm}
\begin{proof}
Check Appendix~\ref{sec:proof_of_lemma_general_potential}.
\end{proof}

The conclusions of the above lemma are similar to the ones for the quadratic case in Lemma~\ref{lemma_quadratic_potential}, except that a local condition is required.
Specifically, the result in \eqref{lemma_general_potential_2} implies that in Sharpened-BFGS $\sigma_t$ converges to zero as long as $\lambda_0$ is sufficiently small. Similarly this rate is faster than the one for Greedy-BFGS as  $\theta^2_t > 0$. Second, the result in \eqref{lemma_general_potential_3} implies that the sequence $\theta_t$ converges to zero at a fast rate as its weighted sum by a factor larger than $1$ that is exponentially growing is finite. Hence, locally this result provides a tighter upper bound on $\theta_t$ compared to the one in \eqref{thm_general_linear_2} and shows  a faster rate than the one in \eqref{thm_general_linear_3} for Sharpened-BFGS. We leverage these points to establish the convergence rate of Sharpened-BFGS for non-quadratic problems.

\begin{theorem}\label{thm_general_superlinear}
Consider the Sharpened-BFGS method in Algorithm~\ref{algo_general} applied to the objective function $f$ satisfying Assumptions~\ref{ass_str_cvx_smooth}-\ref{ass_str_concordant}. Suppose the initial point $x_0$ satisfies
\begin{equation}\label{thm_general_superlinear_1}
\lambda_0 \leq \frac{C_1\mu}{dML}, 
\end{equation}
where $ C_1 = \frac{\ln{2}}{20}$. 
Then, $\forall t \geq 1$, we have
\begin{equation}\label{thm_general_superlinear_2}
    \lambda_t \leq 2\left(1 - \frac{\mu}{2dL}\right)^{\frac{t(t - 1)}{4}} \left(\frac{8dL}{t\mu}\right)^{\frac{t}{2}}\lambda_0.
\end{equation}

\end{theorem}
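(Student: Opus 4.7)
The plan is to mirror the quadratic argument of Theorem~\ref{thm_quadratic_superlinear} while absorbing the two extra error terms that appear in the non-quadratic setting: the $(1 + Mr_t/2)$ factor from Lemma~\ref{lemma_general_theta} and the mismatch between $\theta(J_t, G_t, s_t)$ used in Lemma~\ref{lemma_general_theta} and $\theta_t := \theta(\nabla^2 f(x_t), G_t, s_t)$ which is what Lemma~\ref{lemma_general_potential} actually controls. I would first verify that condition \eqref{thm_general_superlinear_1} is strictly stronger than \eqref{thm_general_linear_1} and \eqref{lemma_general_potential_1} (since $C_1/d \leq C_1 < C_0$ for $d \geq 1$), so both Theorem~\ref{thm_general_linear} and Lemma~\ref{lemma_general_potential} may be invoked throughout.

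Iterating Lemma~\ref{lemma_general_theta} gives $\lambda_t \leq \lambda_0 \prod_{i=0}^{t-1} (1 + Mr_i/2)\, \theta(J_i, G_i, s_i)$. Using strong self-concordance to relate $J_i$ to $\nabla^2 f(x_i)$, I would derive an estimate of the form $\theta(J_i, G_i, s_i) \leq (1 + CMr_i)\,\theta_i$ for a constant $C$. Then Theorem~\ref{thm_general_linear} ensures the geometric decay $\lambda_i \leq (1 - \mu/(2L))^i \lambda_0$, and since $r_i$ is locally comparable to $\lambda_i$ (using that $G_i$ stays spectrally close to $\nabla^2 f(x_i)$ in the neighborhood), the series $\sum_i Mr_i$ is bounded by a constant times $ML\lambda_0/\mu \leq C_1/d$, which is small. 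Hence $\prod_i (1 + Mr_i/2)(1 + CMr_i) \leq \exp(\sum_i O(Mr_i)) \leq 2$, possibly after adjusting $C_1$.

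For the main superlinear term, I would apply the weighted AM-GM inequality exactly as in the quadratic proof: setting $a_i := \theta_i^2/(1 - \mu/(2dL))^i$, the bound \eqref{lemma_general_potential_3} yields $\sum_{i=0}^{t-1} a_i \leq 8(\sigma_0 + 4Md\lambda_0)$, so by AM-GM $\prod_{i=0}^{t-1} a_i \leq \bigl(8(\sigma_0 + 4Md\lambda_0)/t\bigr)^t$, and unpacking the weights gives
\begin{equation*}
\prod_{i=0}^{t-1} \theta_i \leq \left(1 - \frac{\mu}{2dL}\right)^{\frac{t(t-1)}{4}} \left(\frac{8(\sigma_0 + 4Md\lambda_0)}{t}\right)^{\frac{t}{2}}.
\end{equation*}
I would then use $\sigma_0 = L\,\mathrm{Tr}(\nabla^2 f(x_0)^{-1}) - d \leq dL/\mu - d$ together with $4Md\lambda_0 \leq 4C_1\mu/L$ to conclude $\sigma_0 + 4Md\lambda_0 \leq dL/\mu$, replacing the factor by $(8dL/(t\mu))^{t/2}$ and producing the stated bound.

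The main obstacle will lie in the second step: quantifying exactly how close $\theta(J_t, G_t, s_t)$ is to $\theta_t$ via strong self-concordance, establishing the local proportionality $r_t \leq O(\lambda_t)$ through spectral control of $G_t$ (which must be maintained along the iterations), and tightening $C_1 = \ln(2)/20$ enough so that the cumulative product of correction factors is at most $2$. Everything else is a local adaptation of the clean AM-GM argument from the quadratic case, with Theorem~\ref{thm_general_linear} supplying the geometric decay needed to make all correction sums finite.
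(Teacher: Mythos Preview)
Your plan is essentially the paper's proof: check that \eqref{thm_general_superlinear_1} implies \eqref{thm_general_linear_1} (since $C_1/d \le C_1 < C_0$), iterate Lemma~\ref{lemma_general_theta}, bound $\prod_i(1+Mr_i/2) \le 2$ using $r_i \le \lambda_i$ from \eqref{lemma_3_4} together with the geometric decay of $\lambda_i$, apply AM--GM to $\prod_i \theta_i$ after pulling out the weights $(1-\mu/(2dL))^{i/2}$ and invoking \eqref{lemma_general_potential_3}, and conclude with $\sigma_0 + 4Md\lambda_0 \le dL/\mu$. All of this is exactly what the paper does.

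The ``main obstacle'' you anticipate does not actually arise. The definition $\theta_t := \theta(\nabla^2 f(x_t), G_t, s_t)$ in the statement of Lemma~\ref{lemma_general_potential} is a typo: if you inspect the proof of that lemma (and the paper's proof of Theorem~\ref{thm_general_superlinear}), $\theta_t$ is used throughout to mean $\theta(J_t, G_t, s_t)$, precisely the quantity appearing in Lemma~\ref{lemma_general_theta}. So no transfer estimate of the form $\theta(J_i, G_i, s_i) \le (1+CMr_i)\,\theta(\nabla^2 f(x_i), G_i, s_i)$ is needed, and the argument is cleaner than you expect. You can also sharpen ``$r_i$ is locally comparable to $\lambda_i$'' to the exact inequality $r_i \le \lambda_i$ via \eqref{lemma_3_4}, which is all the paper uses.
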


\begin{proof}
Check Appendix~\ref{sec:proof_of_thm_general_superlinear}.
\end{proof}

We observe that the superlinear convergence rate of Theorem~\ref{thm_general_superlinear} is very similar to the result of Theorem~\ref{thm_quadratic_superlinear}. As we discussed in the last paragraph of Section~\ref{sec:quadratic}, we can summarize the Theorem~\ref{thm_general_linear} and \ref{thm_general_superlinear} into one convergence result. Hence, the iteration generated by the Sharpened-BFGS method applied to the unconstrained optimization problem as specified in Algorithm~\ref{algo_general} satisfies the following local convergence rate. When the iteration number $t \leq \Theta(d\frac{L}{\mu})$, the linear convergence rate in \eqref{thm_general_linear_2} holds. When $t \geq \Theta(d\frac{L}{\mu})$, we can reach the superlinear convergence rate in \eqref{thm_general_superlinear_2}.

\section{Discussions}\label{sec:discussion}

In this section, we compare the convergence results of Sharpened-BFGS with the  ones for Greedy-BFGS and standard BFGS. We specifically focus on the case that the objective function satisfies Assumption~\ref{ass_str_cvx_smooth} and \ref{ass_str_concordant}. 
To simplify the comparisons, we replace all the universal constants with $1$ in the convergence results and only compare the parameters $\mu$, $L$, $M$ and $d$ defined in Assumption~\ref{ass_str_cvx_smooth} and \ref{ass_str_concordant}. We denote the condition number by $\kappa = {L}/{\mu} \geq 1$. 

\noindent \textbf{Sharpened-BFGS.} According to our result, if we set $G_0 = LI$ and the initial point $x_0$ satisfies
\begin{equation*}
    \lambda_f(x_0) = \mathcal{O}\left(\frac{1}{dM\kappa}\right),
\end{equation*}
then the iterates generated by Sharpened-BFGS satisfy:
\begin{equation*}
\frac{\lambda_f(x_t)}{\lambda_f(x_0)} \leq
\min\left\{\left(1 \!-\! \frac{1}{\kappa}\right)^t ,\  \left(1 \!-\! \frac{1}{d\kappa}\right)^{\!\frac{t(t - 1)}{4}}\!\! \left(\frac{d\kappa}{t}\right)^{\frac{t}{2}} \right\}.
\end{equation*}
Hence, for $t< d\kappa$, the first upper bound is smaller and the Newton decrement converges at a linear rate of $(1-\frac{1}{\kappa})^t$, and for $t\geq d\kappa $ the second term becomes smaller and we observe a superlinear rate of $(1 \!-\! \frac{1}{d\kappa})^{\frac{t(t - 1)}{4}} (\frac{d\kappa}{t})^{\frac{t}{2}}$, which is faster than quadratic rate.

\noindent \textbf{Greedy-BFGS.} Next, we present the convergence result for Greedy-BFGS in  \cite{rodomanov2020greedy}. If we set $G_0 = LI$ and the initial point $x_0$ satisfies
\begin{equation*}
    \lambda_f(x_0) = \mathcal{O}\left(\frac{1}{dM\kappa}\right),
\end{equation*}
then the iterates of Greedy-BFGS satisfy:
\begin{equation*}
\frac{\lambda_f(x_t)}{\lambda_f(x_0)} \leq
\min\left\{\left(1 \!-\! \frac{1}{\kappa}\right)^t ,\  \left(1 \!-\! \frac{1}{d\kappa}\right)^{\!\frac{t(t - 1)}{2}}\!\! \left(\frac{1}{2}\right)^{t} \right\}.
\end{equation*}

We observe that the superlinear convergence appears after $d\kappa\ln{(d\kappa)}$ iterations for Greedy-BFGS, while for Sharpened-BFGS it takes $d\kappa$ steps to reach the superlinear convergence. Hence, Sharpened-BFGS achieves the superlinear rate with fewer iterations compared to Greedy-BFGS. Moreover, eventually the superlinear convergence rate of Sharpened-BFGS is faster than the one for Greedy-BFGS. This is because both of the methods achieve a quadratic convergence rate of the form $(1 - \frac{1}{d\kappa})^{t^2}$. However, when $t$ is sufficiently large, we have $
(\frac{d\kappa}{t})^{\frac{t}{2}} \ll (\frac{1}{2})^{t}$.

\noindent \textbf{BFGS.} Now, we present the convergence result for BFGS provided in \cite{rodomanov2020ratesnew}. If we set $G_0 = LI$ and the initial point $x_0$ satisfies
\begin{equation*}
    \lambda_f(x_0) = \max\left\{\mathcal{O}\left(\frac{1}{M\kappa}\right), \mathcal{O}\left(\frac{1}{Md\ln{\kappa}}\right)\right\},
\end{equation*}
then the iterates of BFGS satisfy:
\begin{equation*}
\frac{\lambda_f(x_t)}{\lambda_f(x_0)} \leq
\min\left\{\left(1 \!-\! \frac{1}{\kappa}\right)^t ,\ \left(\frac{d\ln{\kappa}}{t}\right)^{\frac{t}{2}} \right\}.
\end{equation*}
We observe that the superlinear convergence of BFGS starts after $d\ln{\kappa}$ steps, while it takes $d\kappa$ iterations for the appearance of the superlinear convergence of Sharpened-BFGS. However, the superlinear convergence rate of Sharpened-BFGS is faster than BFGS as for large $t$ we have
\begin{equation*}
\left(1 \!-\! \frac{1}{d\kappa}\right)^{\!\frac{t(t - 1)}{4}}\!\! \left(\frac{d\kappa}{t}\right)^{\frac{t}{2}}
\ll 
\left(\frac{d\ln{\kappa}}{t}\right)^{\frac{t}{2}}.
\end{equation*}

For the broad strokes, see the quantitative comparisons summarized in Table \ref{tab_1}.

\section{Numerical Experiments}\label{sec:numerical}

\begin{figure}
\centering
\begin{subfigure}{0.32\textwidth}
    \includegraphics[width=\textwidth]{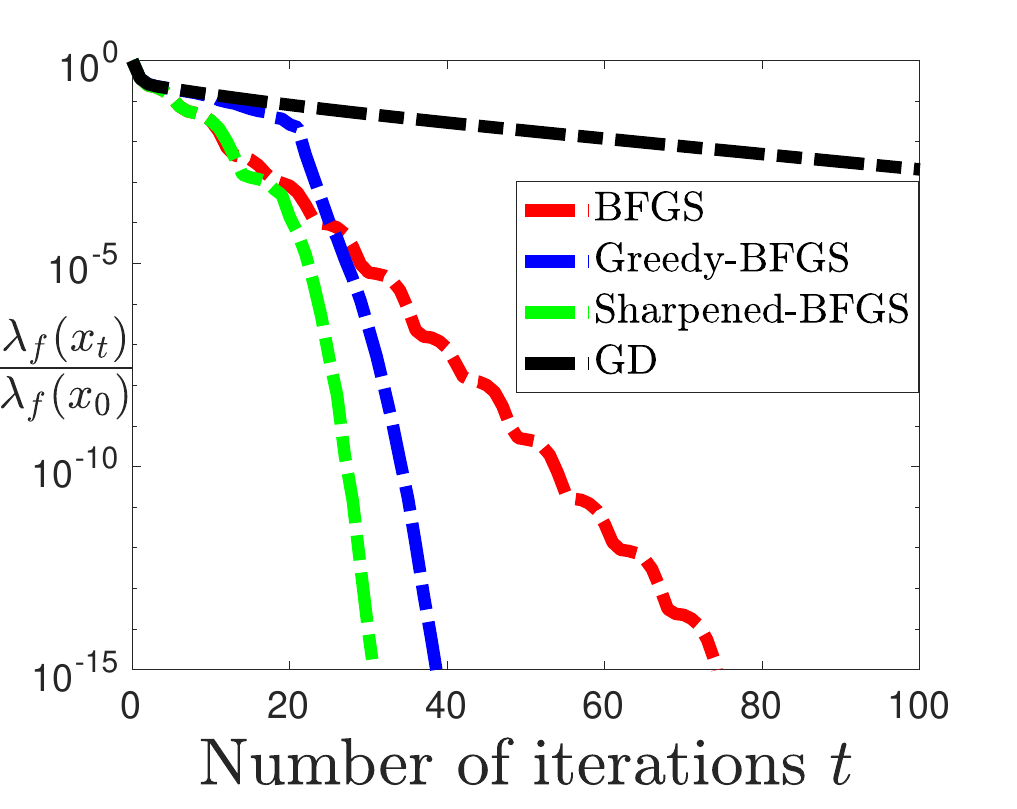}
    \caption{[Dataset svmguide3.}
\end{subfigure}
\hfill
\begin{subfigure}{0.32\textwidth}
    \includegraphics[width=\textwidth]{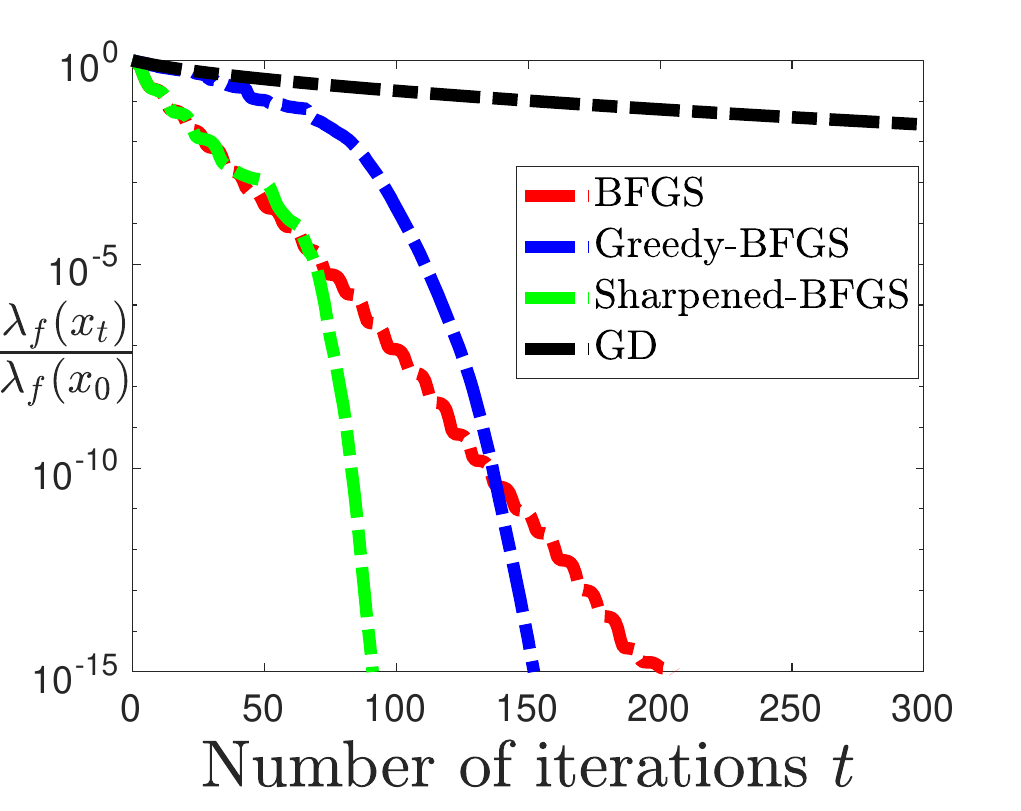}
    \caption{Dataset phishing.}
\end{subfigure}
\hfill
\begin{subfigure}{0.32\textwidth}
    \includegraphics[width=\textwidth]{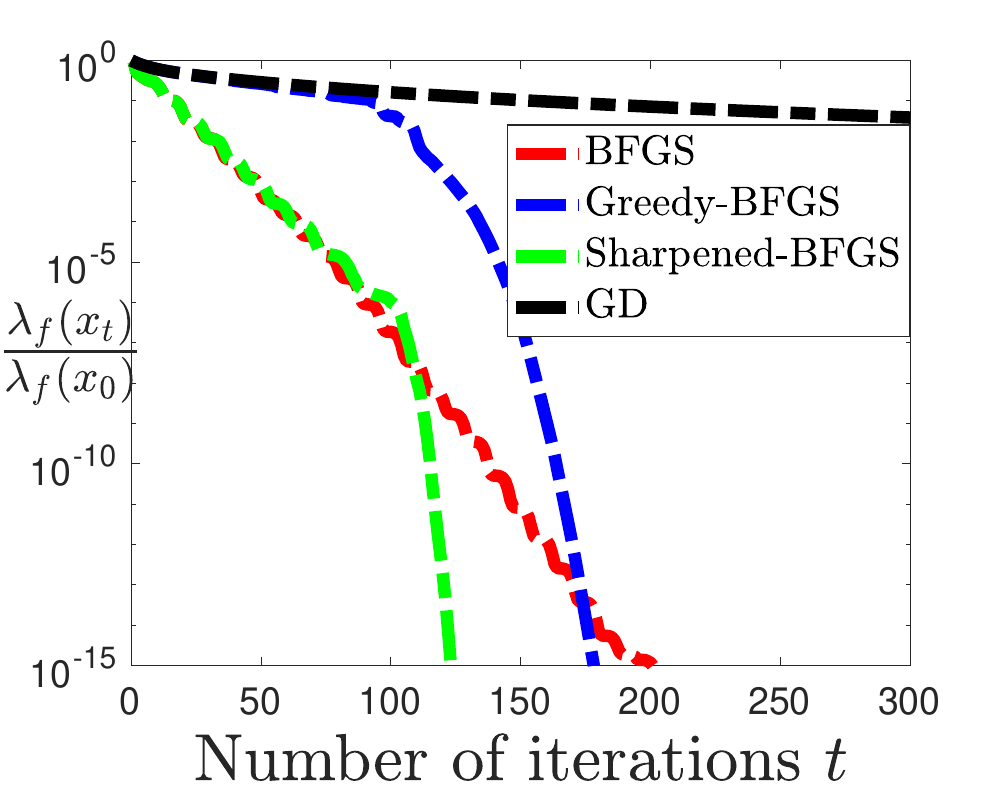}
    \caption{Dataset mushrooms.}
\end{subfigure}
\begin{subfigure}{0.32\textwidth}
    \includegraphics[width=\textwidth]{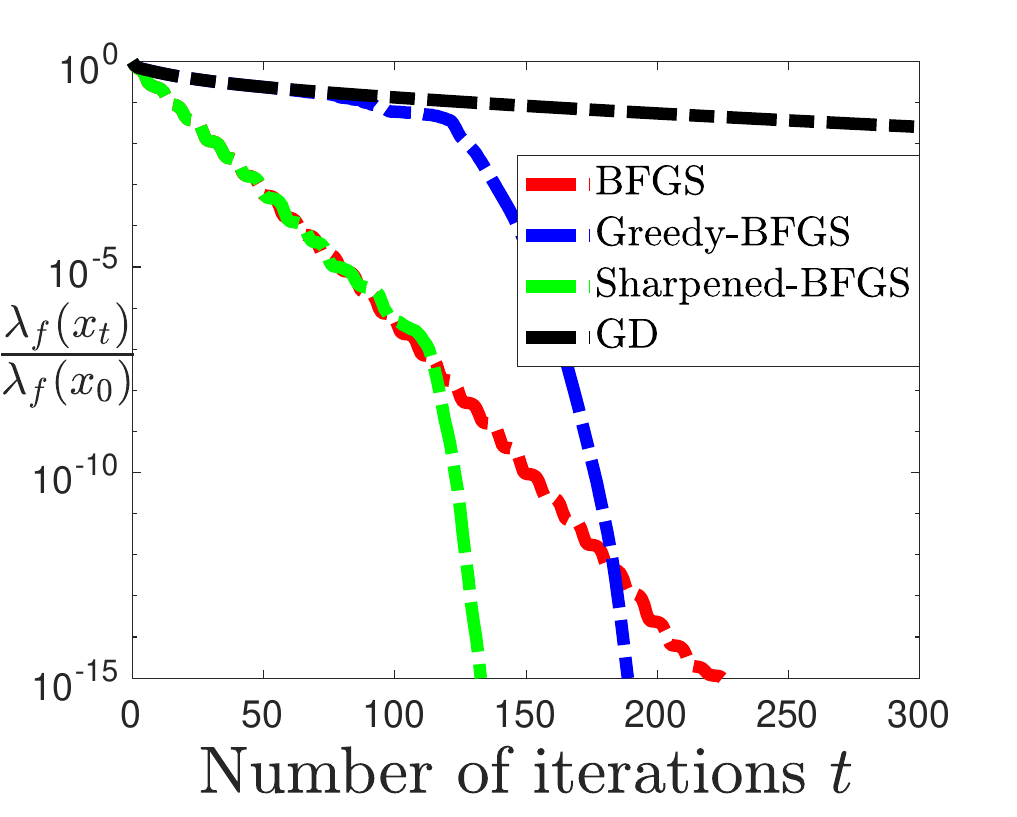}
    \caption{Dataset a9a.}
\end{subfigure}
\hfill
\begin{subfigure}{0.32\textwidth}
    \includegraphics[width=\textwidth]{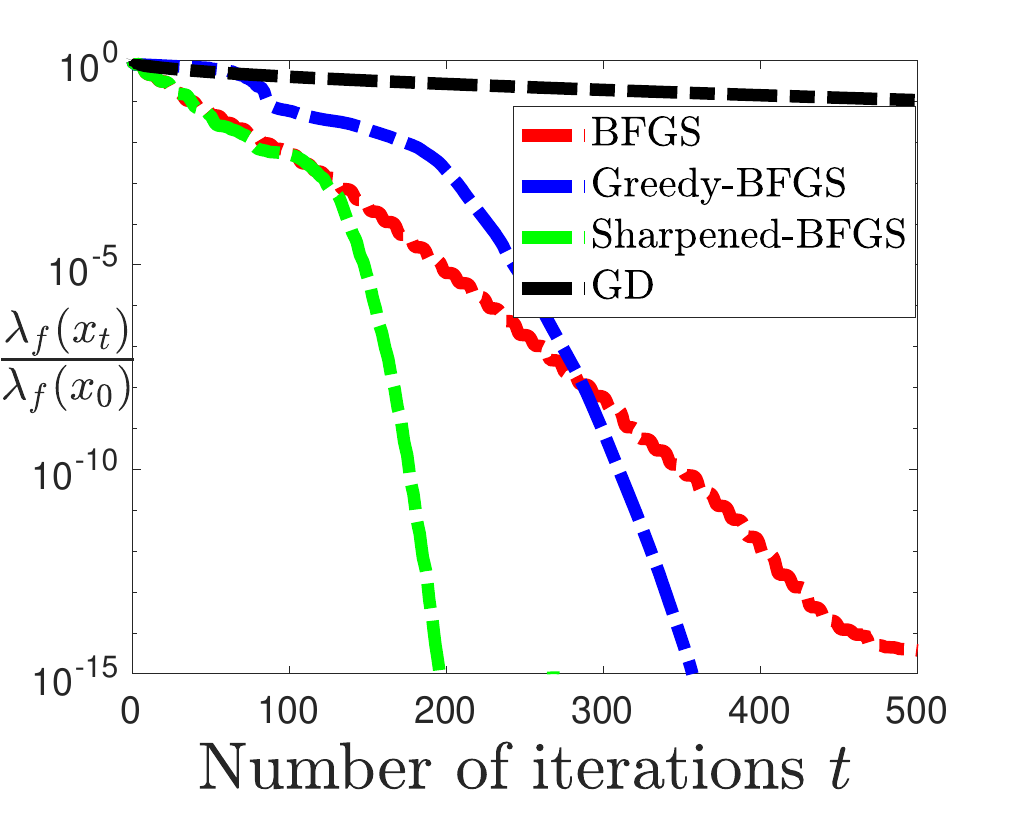}
    \caption{Dataset connect-4.}
\end{subfigure}
\hfill
\begin{subfigure}{0.32\textwidth}
    \includegraphics[width=\textwidth]{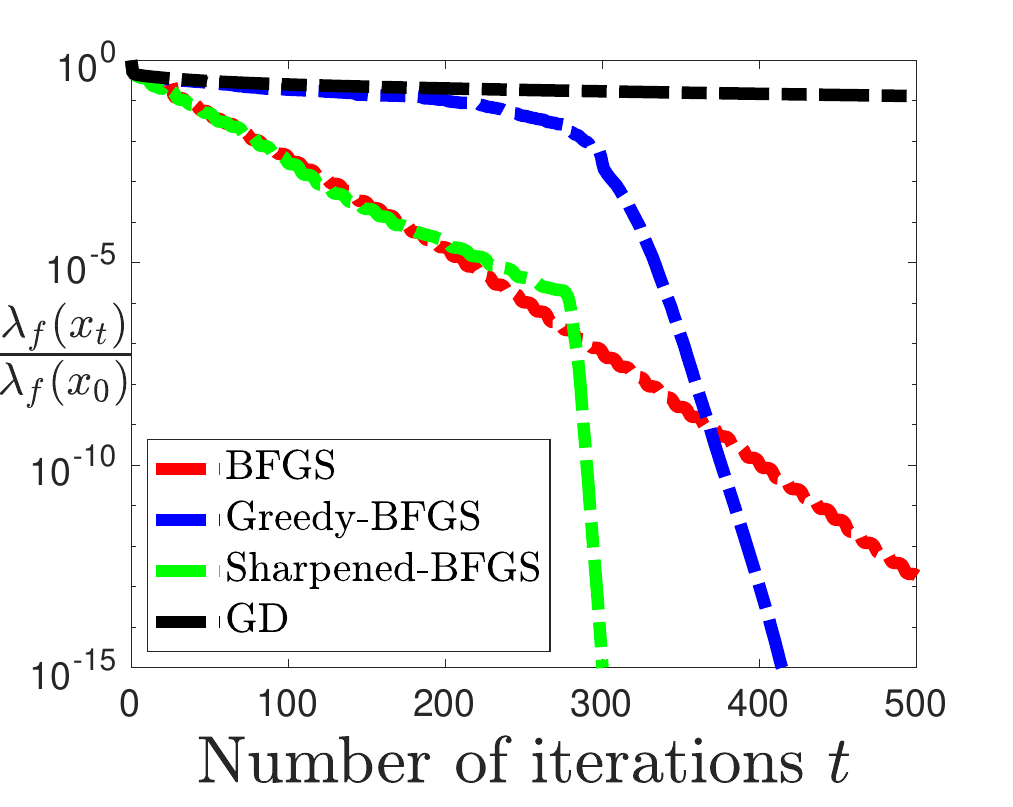}
    \caption{Dataset w8a.}
\end{subfigure}
\begin{subfigure}{0.32\textwidth}
    \includegraphics[width=\textwidth]{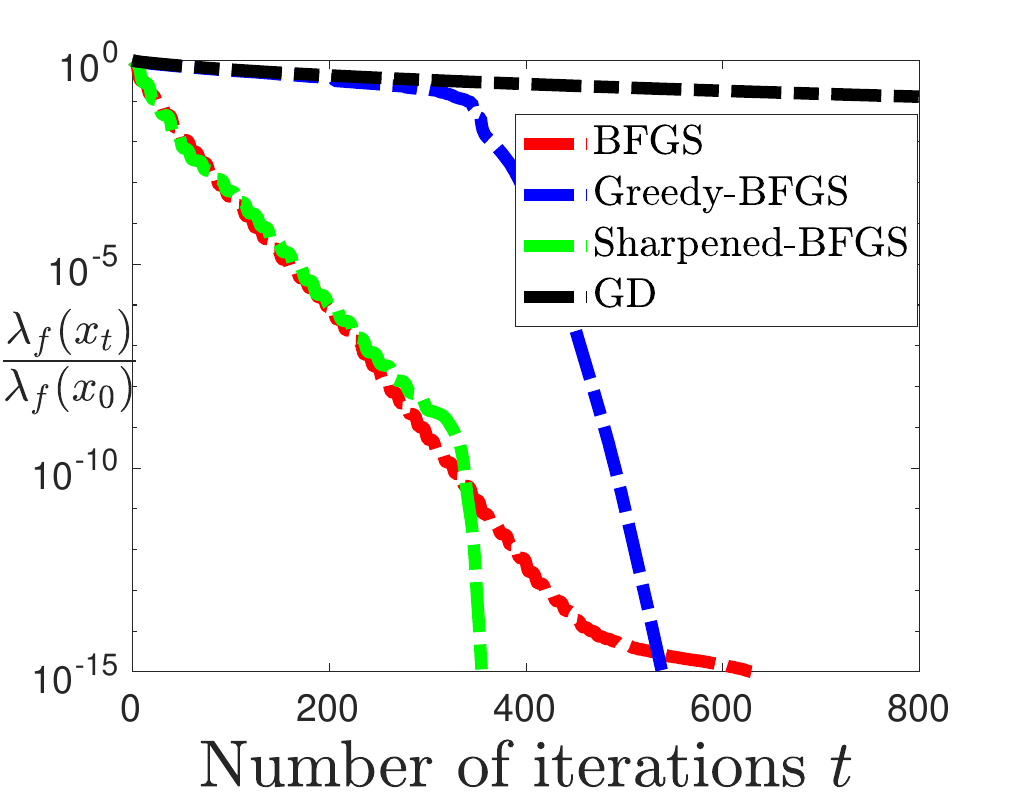}
    \caption{Dataset protein.}
\end{subfigure}
\hfill
\begin{subfigure}{0.32\textwidth}
    \includegraphics[width=\textwidth]{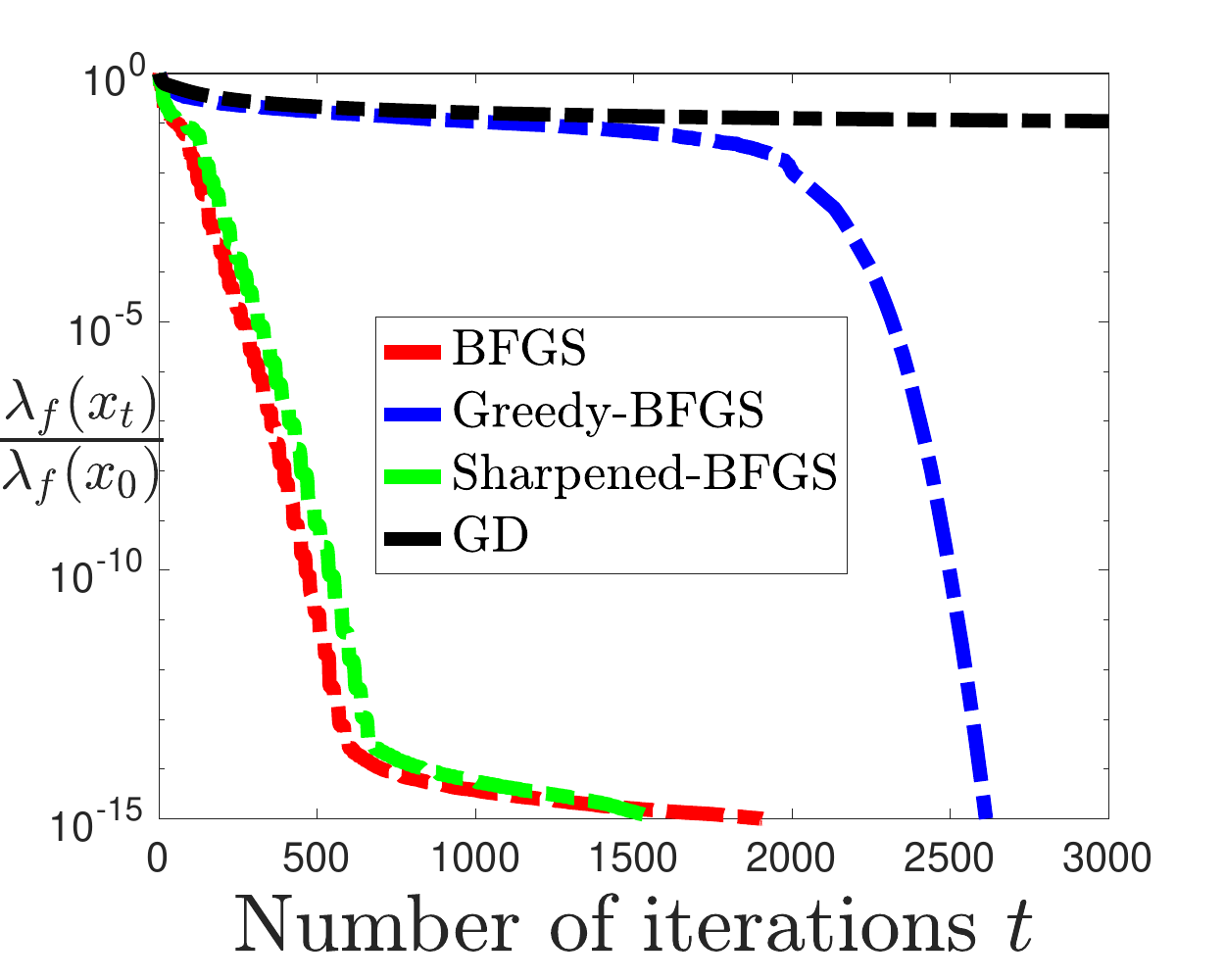}
    \caption{Dataset colon-cancer.}
\end{subfigure}
\hfill
\begin{subfigure}{0.32\textwidth}
    \includegraphics[width=\textwidth]{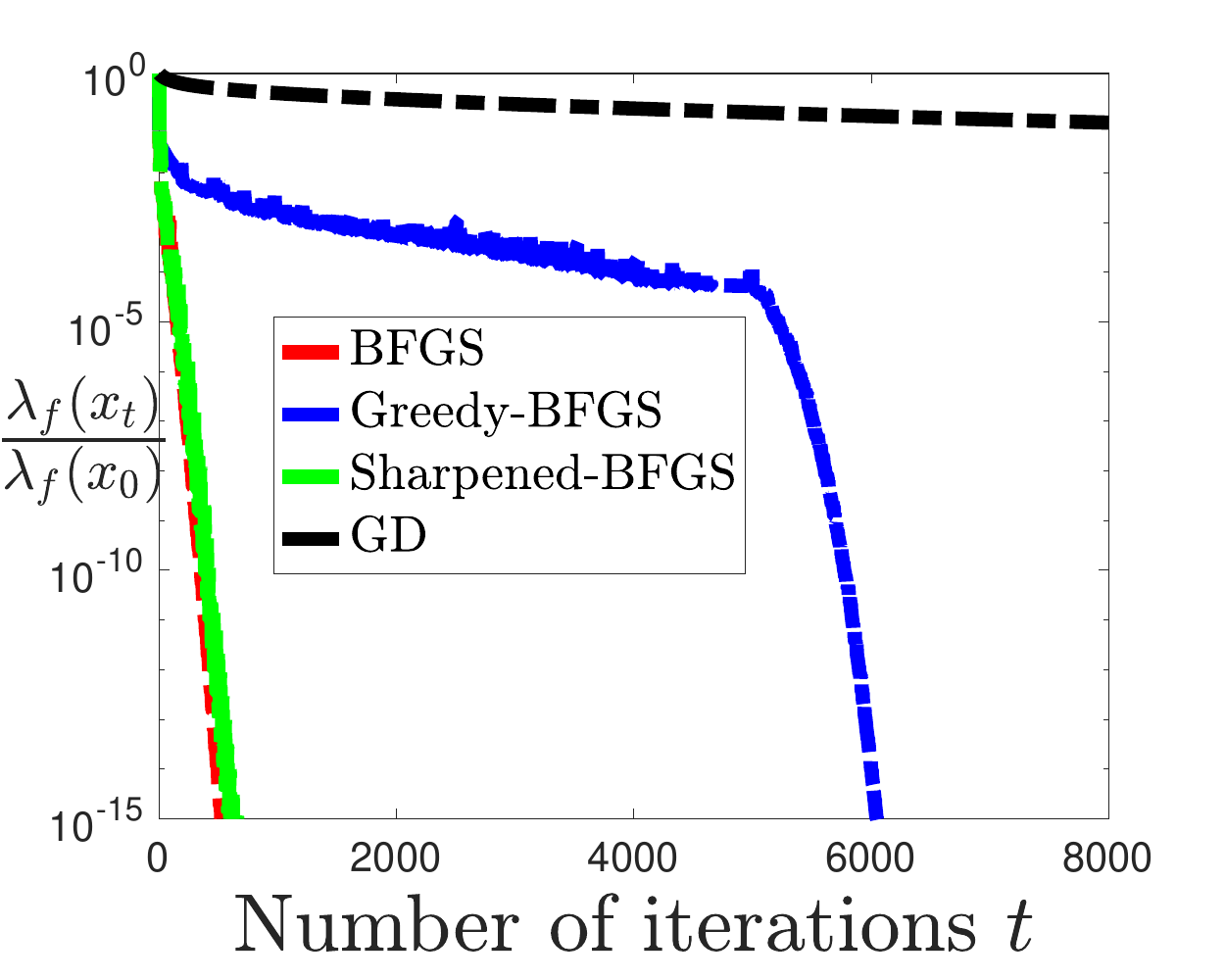}
    \caption{Dataset gisette.}
\end{subfigure}
\caption{Comparison of BFGS, Greedy-BFGS, Sharpened-BFGS and gradient descent (GD) on different datasets.}
\label{all_plots}
\end{figure}

In this section, we present our numerical experiments on different datasets to compare the performance of Sharpened-BFGS with BFGS and Greedy-BFGS. We focus on the following logistic regression problem with $l_2$ regularization
\begin{equation}\label{eq_numerical_experiment}
    \min_{x \in \mathbb{R}^d} f(x) = \frac{1}{N}\sum_{i = 1}^{N}\ln{(1 + e^{-y_i z_i^\top x})} + \frac{\mu}{2}\|x\|^2,
\end{equation}
where $\{z_i\}_{i = 1}^{N}$ are the data points and $\{y_i\}_{i = 1}^{N}$ are their corresponding labels. We assume that $z_i \in \mathbb{R}^d$ and $y_i \in \{-1, 1\}$ for all $1 \leq i \leq N$. This objective function is strongly convex with parameter $\mu > 0$. We normalize all data points such that $\|z_i\| = 1$ for all $1 \leq i \leq N$. Hence, the gradient of the function $f(x)$ is smooth with parameter $L = 1/4 + \mu$. The logistic regression objective function is also strongly self-concordant; check Section 5.1 in \cite{rodomanov2020greedy}. Therefore, the objective function $f(x)$ defined in \eqref{eq_numerical_experiment} satisfies Assumptions~\ref{ass_str_cvx_smooth}-\ref{ass_str_concordant}.

\begin{table}[t]
  \centering
  \begin{tabular}{ |c|c|c|c| }
    \hline
    Dataset & $N$ & $d$ & $\mu$ \\
    \hline
    \hline
    svmguide3 & 1243 & 21 & 0.01 \\
    \hline
    phishing & 11055 & 68 & 0.001 \\
    \hline
    mushrooms & 8124 & 112 & 0.001 \\
    \hline
    a9a & 32561 & 123 & 0.001 \\
    \hline
    connect-4 & 67557 & 126 & 0.0001 \\
    \hline
    w8a & 49749& 300 & 0.0001 \\
    \hline
    protein & 17766 & 357 & 0.0001 \\
    \hline
    colon-cancer & 62 & 2000 & 0.00001 \\
    \hline
    gisette & 6000 & 5000 & 0.00001 \\
    \hline
  \end{tabular}
  \caption{Sample size~$N$, dimension~$d$, regularization parameter~$\mu$.}
  \label{tab_2}
\end{table}

We conduct our experiments on eight datasets. All the parameters (sample size $N$, dimension $d$ and regularization parameter $\mu$) of these different datasets are summarized in Table~\ref{tab_2}. The regularization parameter $\mu$ is chosen from the set $
\mathcal{A} = \{10^{-5}, 10^{-4}, 10^{-3}, 10^{-2}, 10^{-1}, 1, 10\}$
to achieve the best performance. The algorithms that we study are (i) Sharpened-BFGS, (ii) standard BFGS, (iii) Greedy-BFGS, and (iv) gradient descent (GD). We initialize all the algorithms with the same initial point $x_0 = (1/d^{3/2})*\vec{\mathbf{1}}$, where $\vec{\mathbf{1}} \in \mathbb{R}^d$ is the one vector.  We set the initial Hessian approximation matrix as $LI$ and set the stepsize to 1 for all QN methods.  The step size of gradient descent is set as $1/L$ to achieve its linear convergence rate on each dataset. In practice, we found it is better not to apply the correction strategy in the hybrid and greedy methods (i.e., simply set $\hat{G}_t = \bar{G}_t$ in step $7$ of Algorithm~\ref{algo_general}). The convergence rates of the ratio ${\lambda_{f}(x_t)}/{\lambda_{f}(x_0)}$ versus the number of iterations $t$ are presented in Figure~\ref{all_plots}.

We observe that Sharpened-BFGS outperforms both the classical and greedy 
BFGS methods. More specifically, in the initial phase of the convergence process, Sharpened-BFGS exploits the Newton direction approximation of BFGS and has a fast convergence like BFGS, while Greedy-BFGS has a slower convergence at the beginning as it Hessian approximation is not accurate yet. Then, once the time index increases and almost approaches $d$, and $d$ greedy updates are accomplished, the Hessian approximation of Greedy-BFGS becomes more accurate. As a result, Greedy-BFGS achieves a very fast convergence rate at this turning point. Similarly, Sharpened-BFGS follows the same fast convergence of Greedy-BFGS almost at the same time index, as it also exploits the Hessian approximation update rule in Greedy-BFGS. This behavior is consistent over all the considered datasets in our experiments, as illustrated in Figure~\ref{all_plots}. We should also add that 
these empirical observations are consistent with our theoretical findings and the performance comparisons of these algorithms in Section~\ref{sec:discussion}. 

\vspace{-8mm}

\section{Conclusions}\label{sec:conclusion}

\vspace{-5mm}

In this paper, we proposed a novel quasi-Newton method called Sharpened-BFGS for solving unconstrained convex optimization problems, where the objective function is strongly convex with $\mu$, its gradient is smooth with $L$, and it is strongly self-concordant with $M$. Sharpened-BFGS benefits from the Newton direction approximation of BFGS as well as Hessian approximation of Greedy-BFGS. Using these properties, we proved that the proposed Sharpened-BFGS achieves a superlinear convergence rate of $\mathcal{O}((1 - \frac{\mu}{dL})^{\frac{t(t - 1)}{4}} (\frac{dL}{t\mu})^{\frac{t}{2}})$, which is faster than quadratic rate. We also compared the convergence results of our method with the classical BFGS and Greedy-BFGS methods and highlighted how Sharpened-BFGS takes advantage of the Newton direction approximation in BFGS and the Hessian approximation in Greedy-BFGS. We also numerically illustrated the advantages of our proposed method  against BFGS and Greedy-BFGS.

\appendix
\section*{Appendix}

\section{Preliminary Lemmas}
In this subsection, we develop some technical preliminaries which are critical in the path towards establishing our main convergence results. We begin with the following lemma regarding the BFGS operator defined in Sec. \ref{sec:standard_BFGS} 
\begin{lemma}\label{lemma_1}
Consider positive definite matrices $A, G \in \mathbb{R}^{d \times d}$ and suppose that $G_{+} = BFGS(A, G, u)$ as defined in \eqref{BFGS_update} for any $u \in \mathbb{R}^d\backslash\{0\}$. Then, the following results hold:

\begin{enumerate}

    \item
    For any constants $\xi, \eta \geq 1$, we have
    \begin{equation}\label{lemma_1_1}
        \frac{1}{\xi}A \preceq G \preceq \eta A \quad \Rightarrow \quad \frac{1}{\xi}A \preceq G_{+} \preceq \eta A.
    \end{equation}
    
    \item
    If $A \preceq G$, then we have
    \begin{equation}\label{lemma_1_2}
        \sigma(A, G) - \sigma(A, G_+) \geq \theta^2(A,G,u).
    \end{equation}

    \item
    If $\frac{1}{\xi}A \preceq G$  and $ \theta(A, G, u) \leq \xi$, where $\xi \geq 1$ is a constant, then
    \begin{equation}\label{lemma_1_3}
        \sigma(A, G) - \sigma(A, G_+) \geq \frac{1}{4\xi^2}\theta^2(A, G, u) - \ln{\xi}.
    \end{equation}
    
\end{enumerate}

\end{lemma}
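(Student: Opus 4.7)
My plan is to prove all three parts from a common starting point: the trace identity
\[
\sigma(A,G) - \sigma(A,G_+) \;=\; \mathrm{Tr}\bigl(A^{-1}(G - G_+)\bigr) \;=\; \frac{u^\top G A^{-1} G u}{u^\top G u} - 1,
\]
obtained by taking the trace of the BFGS update \eqref{BFGS_update} against $A^{-1}$ (the second rank-one term in \eqref{BFGS_update} contributes exactly $1$ to the trace). Abbreviating $a := u^\top G A^{-1} G u$, $b := u^\top G u$, $c := u^\top A u$ and expanding $(G-A)A^{-1}(G-A) = G A^{-1} G - 2G + A$, one also obtains $\theta^2(A,G,u) = (a - 2b + c)/a$, and Cauchy--Schwarz yields $b^2 \le a c$.

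For Part 1, I would reduce to the case $A = I$ via the congruence $\tilde G := A^{-1/2} G A^{-1/2}$, $\tilde u := A^{1/2} u$, under which $\tilde G_+$ is the BFGS update of $\tilde G$ targeting $I$ and the sandwich $\frac{1}{\xi}A \preceq G \preceq \eta A$ becomes $\frac{1}{\xi}I \preceq \tilde G \preceq \eta I$. The upper bound $\tilde G_+ \preceq \eta I$ follows most cleanly from the inverse form in Remark~\ref{remark_1}: letting $P := I - \tilde u \tilde u^\top/\|\tilde u\|^2$ be the orthogonal projector onto $\tilde u^\perp$, one has $\tilde G_+^{-1} = P \tilde G^{-1} P + \tilde u \tilde u^\top/\|\tilde u\|^2$, and the Pythagorean identity $\|Pv\|^2 + (v^\top \tilde u)^2/\|\tilde u\|^2 = \|v\|^2$ combined with $\tilde G^{-1} \succeq (1/\eta) I$ gives $v^\top \tilde G_+^{-1} v \ge (1/\eta)\|v\|^2$. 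The lower bound $\tilde G_+ \succeq (1/\xi) I$ I would argue directly: decomposing an arbitrary $v$ as $v = \beta \tilde u + w$ with $w^\top \tilde G \tilde u = 0$ collapses the subtracted rank-one term, and the residual $v^\top \tilde G_+ v - (1/\xi) v^\top v$ becomes a quadratic form in $\beta$ whose discriminant is non-positive for every $\xi \ge 1$.

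For Part 2, the target inequality $a/b - 1 \ge (a - 2b + c)/a$ rearranges to $a^2 - 2ab + 2b^2 - bc \ge 0$. The hypothesis $A \preceq G$ gives $c \le b$, so $bc \le b^2$, and the left-hand side is bounded below by $a^2 - 2ab + b^2 = (a-b)^2 \ge 0$.

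Part 3 is the main obstacle. The hypothesis $A \preceq G$ is relaxed to $A/\xi \preceq G$, which only yields $c \le \xi b$; combined with Cauchy--Schwarz $b^2 \le ac$ this gives $m := a/b \ge 1/\xi$. The target becomes $m - 1 + \ln \xi \ge \theta^2/(4\xi^2)$, and using $c \le \xi b$ I would upper bound $\theta^2 = 1 - (2b - c)/a \le 1 + (\xi - 2)/m$, reducing the claim to the one-variable inequality $m - 1 + \ln \xi \ge \bigl(1 + (\xi - 2)/m\bigr)/(4\xi^2)$ on $m \ge 1/\xi$. I would then show the difference is non-decreasing in $m$ on $[1/\xi, \infty)$ (splitting on the sign of $\xi - 2$), so the minimum is at the boundary $m = 1/\xi$, and verify that $1/\xi - 1 + \ln \xi - (\xi - 1)^2/(4\xi^2) \ge 0$ for $\xi \ge 1$ by computing its derivative, which factors as $(2\xi - 1)(\xi - 1)/(2\xi^3) \ge 0$, together with the boundary value at $\xi = 1$. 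The $-\ln \xi$ term on the right-hand side is precisely what absorbs the possibility that $\sigma$ increases (i.e., $m < 1$), a situation ruled out in Part~2 but permitted once $A \preceq G$ is relaxed; the hypothesis $\theta \le \xi$ is what keeps this potential increase quantitatively controlled.
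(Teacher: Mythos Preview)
Your argument is correct on all three parts; the route differs substantially from the paper's, especially in Part~3.

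For Parts~1 and~2 the paper simply defers to Lemmas~2.1 and~2.2 of \cite{rodomanov2020rates}, whereas you supply direct self-contained arguments via the congruence $A\mapsto I$ and the trace identity $\sigma(A,G)-\sigma(A,G_+)=a/b-1$; both are fine, and your reduction of Part~2 to $(a-b)^2\ge 0$ after using $c\le b$ is exactly the clean route.

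For Part~3 the approaches genuinely diverge. The paper passes through the log-det-corrected potential $\psi(A,G)=\sigma(A,G)-\ln\det(A^{-1}G)$ and invokes Lemma~2.4 of \cite{rodomanov2020rates} to get $\psi(A,G)-\psi(A,G_+)\ge\omega(\theta/\xi)$ with $\omega(t)=t-\ln(1+t)$; it then converts back to $\sigma$ via $\det(G^{-1}G_+)=c/b\le\xi$ and lower-bounds $\omega(\theta/\xi)\ge\theta^2/(4\xi^2)$ \emph{using the hypothesis} $\theta\le\xi$. You instead stay entirely with the trace identity, bound $\theta^2\le 1+(\xi-2)/m$ from $c\le\xi b$, and reduce to the one-variable inequality $m-1+\ln\xi\ge\bigl(1+(\xi-2)/m\bigr)/(4\xi^2)$ on $m\ge1/\xi$, which you dispatch by monotonicity in $m$ and the boundary check (your derivative factorisation $(2\xi-1)(\xi-1)/(2\xi^3)$ is correct). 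This is more elementary, avoids the $\psi$ machinery altogether, and --- as a bonus --- never touches the hypothesis $\theta\le\xi$, so you have in fact proved a slightly stronger statement than \eqref{lemma_1_3}. The paper's route, by contrast, makes transparent \emph{why} the $-\ln\xi$ term appears (it is exactly the determinant correction $\ln(c/b)$), at the cost of importing an external lemma.
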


\begin{proof}

Check Lemma 2.1 in \cite{rodomanov2020rates} for the proof of \eqref{lemma_1_1} and check Lemma 2.2 in \cite{rodomanov2020rates} for the proof of \eqref{lemma_1_2}. Now we prove \eqref{lemma_1_3}. We denote $Det(A)$ as the determinant of the matrix $A \in \mathbb{R}^{d \times d}$. Applying results of Lemma 2.4 in \cite{rodomanov2020rates}, we obtain that
\begin{equation}\label{proof_lemma_1_1}
    \psi(A, G) - \psi(A, G_{+}) \geq \omega(\frac{\theta(A, G, u)}{\xi}),
\end{equation}
where
\begin{equation}
\psi(A, G) := Tr(A^{-1}(G - A)) - \ln{Det(A^{-1}G)} = \sigma(A, G) - \ln{Det(A^{-1}G)},
\end{equation}
and
\begin{equation}
\omega(t) := t - \ln(1 + t), \quad \forall t \geq -1.
\end{equation}
Thus, we obtain that
\begin{equation}
\begin{split}
    \psi(A, G) - \psi(A, G_{+}) & =  \sigma(A, G) - \sigma(A, G_{+}) - \ln{Det(A^{-1}G)} + \ln{Det(A^{-1}G_{+})}\\
    & = \sigma(A, G) -  \sigma(A, G_{+}) + \ln{Det(G^{-1}G_{+})}.
\end{split}
\end{equation}
From Lemma 6.2 of \cite{rodomanov2020rates}, we have that
\begin{equation}
    Det(G^{-1}G_{+}) = \frac{u^\top Au}{u^\top Gu}.
\end{equation}
Hence, we get that
\begin{equation}\label{proof_lemma_1_2}
    \psi(A, G) - \psi(A, G_{+})  =  \sigma(A, G) - \sigma(A, G_{+}) + \ln{\frac{u^\top Au}{u^\top Gu}}.
\end{equation}
Substituting \eqref{proof_lemma_1_2} into the \eqref{proof_lemma_1_1}, we obtain that
\begin{equation}\label{proof_lemma_1_3}
\sigma(A, G) - \sigma(A, G_{+}) \geq \omega(\frac{\theta(A, G, u)}{\xi}) - \ln{\frac{u^\top Au}{u^\top Gu}}.
\end{equation}
Notice that the function $\omega(t)$ satisfies the following property,
\begin{equation}
\omega(t) \geq \frac{t^2}{2(t + 1)}, \quad \forall t \geq 0.
\end{equation}
Thus, we derive that
\begin{equation}\label{proof_lemma_1_4}
\omega(\frac{\theta(A, G, u)}{\xi}) \geq \frac{\theta^2(A, G, u)/\xi^2}{2(\theta(A, G, u)/\xi + 1)} = \frac{\theta^2(A, G, u)}{2\xi(\theta(A, G, u) + \xi)} \geq \frac{\theta^2(A, G, u)}{4\xi^2},
\end{equation}
where the second inequality is due to the condition $\theta(A, G, u) \leq \xi$. From condition $\frac{1}{\xi}A \preceq G$, we know that for any $u \in \mathbb{R}^d\backslash\{0\}$
\begin{equation}\label{proof_lemma_1_5}
\frac{u^\top Au}{u^\top Gu} \leq \xi.
\end{equation}
Substituting \eqref{proof_lemma_1_4} and \eqref{proof_lemma_1_5} into \eqref{proof_lemma_1_3}, we achieve conclusion \eqref{lemma_1_3}.

\end{proof}

In the following lemma, we show that the Hessians of the strongly self-concordant function at two different points.

\begin{lemma}\label{lemma_2}
Suppose the objective function $f(x)$ is strongly self-concordant with constant $M > 0$. Consider $x, y \in \mathbb{R}^d$, $r = \|y - x\|_x$ and $J = \int_{0}^{1}\nabla^2{f(x + \tau(y - x))}d\tau$. Then, we have that
\begin{equation}\label{lemma_2_1}
    \frac{\nabla^2{f(x)}}{1 + Mr} \preceq \nabla^2{f(y)} \preceq (1 + Mr)\nabla^2{f(x)}.
\end{equation}
\begin{equation}\label{lemma_2_2}
    \frac{\nabla^2{f(x)}}{1 + \frac{Mr}{2}} \preceq J \preceq (1 + \frac{Mr}{2})\nabla^2{f(x)}.
\end{equation}
\begin{equation}\label{lemma_2_3}
    \frac{\nabla^2{f(y)}}{1 + \frac{Mr}{2}} \preceq J \preceq (1 + \frac{Mr}{2})\nabla^2{f(y)}.
\end{equation}
\end{lemma}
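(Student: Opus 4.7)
The plan is to derive all three inequalities directly from the strong self-concordance definition (Assumption \ref{ass_str_concordant}), which gives $\nabla^2 f(y) - \nabla^2 f(x) \preceq M\|y-x\|_z \nabla^2 f(w)$ for \emph{any} choice of the auxiliary points $z, w$. The flexibility in choosing $z$ and $w$ is the main lever: by picking $z = x$ we can control the norm $\|y-x\|_z = r$ uniformly along the segment, while picking $w$ to match the point whose Hessian we want on the right-hand side produces clean scaled inequalities.

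First I would prove \eqref{lemma_2_1}. The upper bound follows by setting $z = w = x$ in the strong self-concordance inequality, yielding $\nabla^2 f(y) - \nabla^2 f(x) \preceq Mr\nabla^2 f(x)$, i.e., $\nabla^2 f(y) \preceq (1+Mr)\nabla^2 f(x)$. For the lower bound I swap the roles of $x$ and $y$ in the definition and choose $z = x$ (so that $\|x - y\|_x = r$) together with $w = y$, which gives $\nabla^2 f(x) - \nabla^2 f(y) \preceq Mr\nabla^2 f(y)$ and therefore $\nabla^2 f(x) \preceq (1+Mr)\nabla^2 f(y)$.

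Next I would prove \eqref{lemma_2_2} by integrating \eqref{lemma_2_1} along the segment. For $\tau \in [0,1]$, the point $y_\tau := x + \tau(y - x)$ satisfies $\|y_\tau - x\|_x = \tau r$, so applying \eqref{lemma_2_1} with $y_\tau$ in place of $y$ gives
\begin{equation*}
\frac{\nabla^2 f(x)}{1 + M\tau r} \preceq \nabla^2 f(y_\tau) \preceq (1 + M\tau r)\nabla^2 f(x).
\end{equation*}
Integrating the upper inequality over $\tau \in [0,1]$ yields $J \preceq (1 + Mr/2)\nabla^2 f(x)$ immediately. The lower inequality integrates to $J \succeq \frac{\ln(1+Mr)}{Mr}\nabla^2 f(x)$, so the main analytic step is the scalar inequality
\begin{equation*}
\frac{\ln(1+a)}{a} \;\geq\; \frac{1}{1 + a/2}, \qquad a \geq 0,
\end{equation*}
which I would verify by observing that its difference vanishes at $a=0$ and has derivative proportional to $a^2/((1+a)(2+a)^2) \geq 0$, so the inequality is monotone in $a$. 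For \eqref{lemma_2_3}, I would again integrate along the segment, but this time using the strong self-concordance definition with $z = x$ (rather than $z = y$) so that $\|y - y_\tau\|_x = (1-\tau)r$ remains a uniformly controlled quantity. Choosing $w = y$ or $w = y_\tau$ as appropriate yields the two-sided bound $\frac{\nabla^2 f(y)}{1 + M(1-\tau)r} \preceq \nabla^2 f(y_\tau) \preceq (1 + M(1-\tau)r)\nabla^2 f(y)$, and integrating with the substitution $u = 1-\tau$ brings us back to the same log-vs-linear inequality used in \eqref{lemma_2_2}.

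The main obstacle is the mild technical care needed in \eqref{lemma_2_3}: the naive choice $z = y$ would replace $r$ by $\|y - x\|_y$, which is \emph{not} equal to $r$ and would give the wrong constant. Exploiting that $z$ in Assumption \ref{ass_str_concordant} is independent of $x, y, w$ is what allows the symmetric constant $Mr/2$ in both \eqref{lemma_2_2} and \eqref{lemma_2_3}. Apart from this and the scalar log inequality, the proof is routine integration.
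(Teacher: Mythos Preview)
Your proof is correct. The paper does not supply its own argument for this lemma; it simply cites Lemma~4.2 of \cite{rodomanov2020greedy}, so there is no in-paper proof to compare against, and your derivation is exactly the natural one. One small correction: the explicit derivative formula you quote for the scalar inequality is not quite right as stated, but the inequality $(1+a/2)\ln(1+a)\ge a$ follows cleanly anyway by observing that $g(a)=(2+a)\ln(1+a)-2a$ satisfies $g(0)=g'(0)=0$ and $g''(a)=a/(1+a)^2\ge 0$. Your remark that in \eqref{lemma_2_3} one must take $z=x$ (not $z=y$) in Assumption~\ref{ass_str_concordant} so that the norm stays equal to $r$ is precisely the point that makes the constant $Mr/2$ come out symmetrically.
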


\begin{proof}
Check Lemma 4.2 in \cite{rodomanov2020greedy}.
\end{proof}

\begin{lemma}\label{lemma_3}
Suppose the objective function $f(x)$ satisfies the Assumption~\ref{ass_str_cvx_smooth} and \ref{ass_str_concordant}. Consider the following update
\begin{equation}\label{lemma_3_1}
x_{t + 1} = x_t - G_t^{-1}\nabla{f(x_t)},
\end{equation}
where $G_{t} \in \mathbb{R}^{d \times d}$ is the s.p.d. Hessian approximation matrix satisfying that
\begin{equation}\label{lemma_3_2}
\nabla^2{f(x_t)} \preceq G_t \preceq \eta \nabla^2{f(x_t)},
\end{equation}
where $\eta \geq 1$ is some constant. Suppose the following condition holds
\begin{equation}\label{lemma_3_3}
    M\lambda_{t} \leq 2.
\end{equation}
Denote that $r_t = \|x_{t + 1} - x_t\|_{x_t}$ and $J_t = \int_{0}^{1}\nabla^2{f(x_t + \tau(x_{t + 1} - x_{t}))d\tau}$. Then, we have
\begin{equation}\label{lemma_3_4}
    r_t \leq \lambda_t,
\end{equation}
\begin{equation}\label{lemma_3_5}
    \theta(J_t, G_t, x_{t + 1} - x_{t}) \leq \frac{\eta - 1 + \frac{M\lambda_{t}}{2}}{\eta}.
\end{equation}
\end{lemma}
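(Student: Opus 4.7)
For part (1), the plan is to unpack the definition: since $s_t = -G_t^{-1}\nabla f(x_t)$, we have $r_t^2 = s_t^\top \nabla^2 f(x_t) s_t = \nabla f(x_t)^\top G_t^{-1}\nabla^2 f(x_t) G_t^{-1}\nabla f(x_t)$. The hypothesis $\nabla^2 f(x_t) \preceq G_t$ gives $G_t^{-1} \preceq \nabla^2 f(x_t)^{-1}$, and from this I would derive the stronger sandwich $G_t^{-1}\nabla^2 f(x_t) G_t^{-1} \preceq \nabla^2 f(x_t)^{-1}$ by conjugating with $\nabla^2 f(x_t)^{1/2}$ (the condition becomes $(\nabla^2 f(x_t)^{1/2} G_t^{-1}\nabla^2 f(x_t)^{1/2})^2 \preceq I$, which follows from $\nabla^2 f(x_t)^{1/2} G_t^{-1}\nabla^2 f(x_t)^{1/2} \preceq I$). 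Substituting back yields $r_t^2 \leq \nabla f(x_t)^\top \nabla^2 f(x_t)^{-1}\nabla f(x_t) = \lambda_t^2$.

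For part (2), I would first combine Lemma~\ref{lemma_2} with the hypothesis on $G_t$. Writing $c := Mr_t/2$, the bounds $\tfrac{1}{1+c}\nabla^2 f(x_t) \preceq J_t \preceq (1+c)\nabla^2 f(x_t)$ and $\nabla^2 f(x_t) \preceq G_t \preceq \eta\nabla^2 f(x_t)$ combine to give $\tfrac{1}{1+c} J_t \preceq G_t \preceq \eta(1+c) J_t$, i.e., the whitened operator $P := J_t^{-1/2} G_t J_t^{-1/2}$ has spectrum in $[\tfrac{1}{1+c},\,\eta(1+c)]$. Next I would perform the change of variables $w = J_t^{1/2} s_t$, which rewrites the ratio defining $\theta^2$ as $\frac{w^\top (P-I)^2 w}{w^\top P^2 w}$; a further substitution $v = Pw$ converts this to $\frac{v^\top (I - P^{-1})^2 v}{v^\top v}$, bounded above by $\|I - P^{-1}\|^2$. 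Since the eigenvalues of $P^{-1}$ lie in $[\tfrac{1}{\eta(1+c)},\,1+c]$, those of $I-P^{-1}$ lie in $[-c,\,1-\tfrac{1}{\eta(1+c)}]$, so $\|I - P^{-1}\| \leq \max\{c,\,1 - \tfrac{1}{\eta(1+c)}\}$.

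The final step is to verify that both quantities inside this maximum are bounded by $\tfrac{\eta - 1 + c}{\eta}$. For the second, a direct subtraction gives $\tfrac{\eta-1+c}{\eta} - \bigl(1 - \tfrac{1}{\eta(1+c)}\bigr) = \tfrac{c^2}{\eta(1+c)} \geq 0$. For the first, the inequality $c \leq \tfrac{\eta - 1 + c}{\eta}$ rearranges to $c(\eta-1) \leq \eta - 1$, which holds whenever $c \leq 1$; this in turn follows from part~(1) together with the assumption $M\lambda_t \leq 2$, since $c = Mr_t/2 \leq M\lambda_t/2 \leq 1$. Combining these bounds with $r_t \leq \lambda_t$ yields the claimed $\theta(J_t, G_t, s_t) \leq \tfrac{\eta - 1 + M\lambda_t/2}{\eta}$. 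I expect the only subtle point to be recognizing the right whitening change of variables; after that the argument reduces to a short scalar optimization, with the hypothesis $M\lambda_t \leq 2$ playing the role of ensuring that the ``$-c$'' eigenvalue branch of $I - P^{-1}$ is dominated by the ``$1 - 1/(\eta(1+c))$'' branch (after relaxation).
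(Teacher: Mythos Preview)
Your proposal is correct and follows essentially the same route as the paper: both arguments sandwich $G_t$ between multiples of $J_t$ via Lemma~\ref{lemma_2}, pass to the inverse to control the spectrum of $I - P^{-1}$ (equivalently, $G_t^{-1} - J_t^{-1}$ relative to $J_t^{-1}$), and then verify that each endpoint of that spectrum is dominated by $(\eta - 1 + M\lambda_t/2)/\eta$, using $M\lambda_t \leq 2$ for the lower endpoint. The only cosmetic differences are that you phrase everything through the whitened operator $P = J_t^{-1/2} G_t J_t^{-1/2}$ and keep $c = Mr_t/2$ until the last line, whereas the paper works directly with matrix inequalities and substitutes $r_t \leq \lambda_t$ immediately after invoking Lemma~\ref{lemma_2}.
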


\begin{proof}
From \eqref{lemma_3_1}, we have that
\begin{equation}
\begin{split}
    r_t & = \|x_{t + 1} - x_t\|_{x_t} = \left(\nabla{f(x_t)}^\top G^{-1}_t\nabla^2{f(x_t)}G^{-1}_t\nabla{f(x_t)}\right)^\frac{1}{2}\\
    & \leq  \left(\nabla{f(x_t)}^\top G^{-1}_t\nabla{f(x_t)}\right)^\frac{1}{2} \leq  \left(\nabla{f(x_t)}^\top \nabla^2{f(x_t)}^{-1}\nabla{f(x_t)}\right)^\frac{1}{2} = \lambda_t,
\end{split}
\end{equation}
where the inequalities hold due to \eqref{lemma_3_2}. Therefore, \eqref{lemma_3_4} holds. Now we condition \eqref{lemma_3_5}. Using \eqref{lemma_3_2} and \eqref{lemma_2_2} of Lemma~\ref{lemma_2}, we obtain that
\begin{equation}
    \frac{1}{1 + \frac{Mr_t}{2}}J_t \preceq \nabla^2{f(x_t)} \preceq G_t \preceq \eta \nabla^2{f(x_t)} \preceq \eta(1 + \frac{Mr_t}{2})J_t.
\end{equation}
Using $r_t \leq \lambda_t$ from \eqref{lemma_3_4}, we get that
\begin{equation}
    \frac{1}{1 + \frac{M\lambda_t}{2}}J_t \preceq G_t \preceq \eta(1 + \frac{M\lambda_t}{2})J_t.
\end{equation}
Hence, we have
\begin{equation}
    -\left(1 - \frac{1}{\eta(1 + \frac{M\lambda_t}{2})}\right)J^{-1}_t \preceq G^{-1}_t - J^{-1}_t \preceq \frac{M\lambda_t}{2}J^{-1}_t.
\end{equation}
Notice that
\begin{equation}
    \left(1 - \frac{1}{\eta(1 + \frac{M\lambda_t}{2})}\right) \leq 1 - \frac{1 - \frac{M\lambda_t}{2}}{\eta} = \frac{\eta - 1 + \frac{M\lambda_t}{2}}{\eta}.
\end{equation}
Since $M\lambda_t \leq 2$ and $\eta \geq 1$, we have
\begin{equation}
    \frac{M\lambda_t}{2} = 1 - (1 - \frac{M\lambda_t}{2}) \leq 1 - \frac{1 - \frac{M\lambda_t}{2}}{\eta} = \frac{\eta - 1 + \frac{M\lambda_t}{2}}{\eta}.
\end{equation}
Therefore, we have
\begin{equation}
    -\frac{\eta - 1 + \frac{M\lambda_t}{2}}{\eta}J^{-1}_t \preceq G^{-1}_t - J^{-1}_t \preceq \frac{\eta - 1 + \frac{M\lambda_t}{2}}{\eta}J^{-1}_t.
\end{equation}
Hence, we get
\begin{equation}
    (G^{-1}_t - J^{-1}_t)J_t(G^{-1}_t - J^{-1}_t) \preceq \left(\frac{\eta - 1 + \frac{M\lambda_t}{2}}{\eta}\right)^2 J^{-1}_t,
\end{equation}
\begin{equation}
    s_t^\top G_t(G^{-1}_t - J^{-1}_t)J_t(G^{-1}_t - J^{-1}_t)G_t s_t \leq \left(\frac{\eta - 1 + \frac{M\lambda_t}{2}}{\eta}\right)^2 s_t^\top G_tJ ^{-1}_t G_t s_t,
\end{equation}
where $s_t = x_{t + 1} - x_t$ is the variable difference. Therefore, by the definition of $\theta$ in \eqref{theta}, we prove conclusion \eqref{lemma_3_5},
\begin{equation}
\begin{split}
    \theta(J_t, G_t, x_{t + 1} - x_{t}) & = \left(\frac{s_t^\top (G_t - J_t) J_t^{-1} (G_t - J_t) s_t}{s_t^\top G_t J_t^{-1} G_t s_t}\right)^{\frac{1}{2}}\\
    & = \left(\frac{s_t^\top G_t(J_t^{-1} - G_t^{-1})J_t(J_t^{-1} - G_t^{-1}) G_ts_t}{s_t^\top G_t J_t^{-1} G_t s_t}\right)^{\frac{1}{2}} \leq \frac{\eta - 1 + \frac{M\lambda_t}{2}}{\eta}.
\end{split}
\end{equation}

\end{proof}

\section{Proof of Theorem~\ref{thm_quadratic_linear}}\label{sec:proof_of_thm_quadratic_linear}

First, we use induction to prove the following condition

\begin{equation}\label{proof_thm_quadratic_linear_1}
A \preceq G_t \preceq \frac{L}{\mu}A, \quad \forall t \geq 0.
\end{equation}

From $\mu I \preceq A \preceq LI$, we observe that the initial Hessian approximation matrix $G_0 = LI$ satisfies $A \preceq G_0 \preceq \frac{L}{\mu}A$. Hence, condition \eqref{proof_thm_quadratic_linear_1} holds for $t = 0$. We assume that condition \eqref{proof_thm_quadratic_linear_1} holds for $t = k$, i.e., $A \preceq G_k \preceq \frac{L}{\mu}A$, where $k \geq 0$. Applying \eqref{lemma_1_1} of Lemma~\ref{lemma_1} to the update in step $4$ of Algorithm~\ref{algo_quadratic}, we obtain that $A \preceq \bar{G}_k \preceq \frac{L}{\mu}A$. Applying \eqref{lemma_1_1} of Lemma~\ref{lemma_1} again to the update in step $6$ of Algorithm~\ref{algo_quadratic}, we obtain that $A \preceq G_{k+1} \preceq \frac{L}{\mu}A$. Therefore, condition \eqref{proof_thm_quadratic_linear_1} holds for $t = k + 1$. By induction, we prove that condition \eqref{proof_thm_quadratic_linear_1} holds for any $t \geq 0$. Moreover, this condition implies that for any $t \geq 0$, we have
\begin{equation}
    0 \preceq A^{-1} - G_{t}^{-1} \preceq (1 - \frac{\mu}{L})A^{-1}.
\end{equation}
Hence, we obtain that
\begin{equation}
(G_t - A)A^{-1}(G_t - A) = G_t(A^{-1} - G_t^{-1})A(A^{-1} - G_t^{-1})G_t \preceq (1 - \frac{\mu}{L})^2G_t A^{-1}G_t,
\end{equation}
\begin{equation}
s_t^\top (G_t - A)A^{-1}(G_t - A)s_t \leq (1 - \frac{\mu}{L})^2 s_t G_t A^{-1}G_t s_t,
\end{equation}
where $s_t = x_{t + 1} - x_t$ is the variable difference. By the definition of $\theta$ in \eqref{theta}, we have that
\begin{equation}
\theta(A, G_t, x_{t + 1} - x_{t}) = \left(\frac{s_t^\top (G_t - A)A^{-1}(G_t - A)s_t}{s_t G_t A^{-1}G_t s_t}\right)^{\frac{1}{2}} \leq 1 - \frac{\mu}{L}.
\end{equation}
Therefore, \eqref{thm_quadratic_linear_1} holds for any $t \geq 0$. Applying \eqref{lemma_quadratic_theta_1} of Lemma~\ref{lemma_quadratic_theta}, we prove that
\begin{equation}
 \lambda_{t + 1} = \theta(A, G_t, x_{t + 1} - x_{t})\lambda_{t} \leq (1 - \frac{\mu}{L}) \lambda_{t}, \qquad \forall t \geq 0.
\end{equation}
Hence, we prove the linear convergence rate of \eqref{thm_quadratic_linear_2}. \hfill \qed
\section{Proof of Lemma~\ref{lemma_quadratic_potential}}\label{sec:proof_of_lemma_quadratic_potential}

The initial Hessian approximation matrix $G_0 = LI \succeq A$. Applying the same induction technique used in the proof of Theorem~\ref{thm_quadratic_linear}, we can prove that for any $t \geq 0$
\begin{equation}
G_t \succeq A, \qquad  \bar{G_t} \succeq A,
\end{equation}
where $\bar{G}_t$ is defined in step $4$ of Algorithm~\ref{algo_quadratic}. Using \eqref{lemma_1_2} of Lemma~\ref{lemma_1}, we have that
\begin{equation}
    \sigma(A, G_t) - \sigma(A, \bar{G_t}) \geq \theta^2(A, G_t , x_{t + 1} - x_t), \quad \forall t \geq 0.
\end{equation}
Applying \eqref{lemma_BFGS_greedy_1} of Lemma~\ref{lemma_BFGS_greedy} to the step $6$ of Algorithm~\ref{algo_quadratic}, we obtain that
\begin{equation}
    \sigma(A, G_{t + 1}) \leq (1 - \frac{\mu}{dL})\sigma(A, \bar{G_t}), \quad \forall t \geq 0.
\end{equation}
We prove conclusion \eqref{lemma_quadratic_potential_1} by combining and regrouping the above two inequalities. Now we prove condition \eqref{lemma_quadratic_potential_2}. Recall and define the following shorthanded notations
\begin{equation}
\theta_t = \theta(A, G_t, x_{t + 1} - x_{t}), \qquad \sigma_t = \sigma(A, G_t), \qquad c = \frac{\mu}{dL}.
\end{equation}
Condition \eqref{lemma_quadratic_potential_1} is equivalent to
\begin{equation}
    \sigma_{t} \leq (1 - c)\sigma_{t - 1} - (1 - c)\theta^2_{t - 1}, \quad \forall t \geq 1.
\end{equation}
Applying the above inequality recursively, we can derive that
\begin{equation}
\begin{split}
    \sigma_{t} & \leq (1 - c)\sigma_{t - 1} - (1 - c)\theta^2_{t - 1}\\
    & \leq (1 - c)^2\sigma_{t - 2} - (1 - c)^2\theta^2_{t - 2} - (1 - c)\theta^2_{t - 1}\\
    & \leq (1 - c)^t\sigma_{0} - \sum_{i = 0}^{t - 1}(1 - c)^{t - i}\theta^2_i.
\end{split}
\end{equation}
The above inequality indicates that
\begin{equation}\label{proof_lemma_quadratic_potential_1}
\sum_{i = 0}^{t - 1}(1 - c)^{t - i}\theta^2_i \leq (1 - c)^t\sigma_{0} - \sigma_{t} \leq (1 - c)^t\sigma_{0}.
\end{equation}
Dividing the term $(1 - c)^t$ on both sides of the above inequality, we can obtain that
\begin{equation}
\sum_{i = 0}^{t - 1}\frac{\theta^2_i}{(1 - c)^i} \leq \sigma_{0}.
\end{equation}
Hence, we prove the result \eqref{lemma_quadratic_potential_2} since $c = \frac{\mu}{dL}$. \hfill \qed

\section{Proof of Theorem~\ref{thm_quadratic_superlinear}}\label{sec:proof_of_thm_quadratic_superlinear}

Using the condition $A^{-1} \preceq \frac{1}{\mu}I$ and recalling the notation $c = \frac{\mu}{dL}$, we can upper bound $\sigma_0$ by
\begin{equation}
\sigma_0 = \sigma(A, G_0) = Tr(A^{-1}LI) - d \leq Tr(\frac{L}{\mu}I)- d = d(\frac{L}{\mu} - 1) \leq d\frac{L}{\mu} = \frac{1}{c}.
\end{equation}
Combining the above upper bound and \eqref{lemma_quadratic_potential_2}, we derive that
\begin{equation}\label{proof_thm_quadratic_superlinear_1}
\sum_{i = 0}^{t - 1}\frac{\theta^2_i}{(1 - c)^i} \leq \sigma_{0} \leq \frac{1}{c}.
\end{equation}
From \eqref{lemma_quadratic_theta_1} of Lemma~\ref{lemma_quadratic_theta}, we obtain that
\begin{equation}\label{proof_thm_quadratic_superlinear_2}
    \frac{\lambda_{t}}{\lambda_{0}} = \prod_{i = 0}^{t - 1}\frac{\lambda_{i + 1}}{\lambda_{i}} = \prod_{i = 0}^{t - 1}\theta_i = \prod_{i = 0}^{t - 1}(1 - c)^{\frac{i}{2}}\frac{\theta_i}{(1 - c)^{\frac{i}{2}}} = \prod_{i = 0}^{t - 1}(1 - c)^{\frac{i}{2}}\prod_{i = 0}^{t - 1}\frac{\theta_i}{(1 - c)^{\frac{i}{2}}} = (1 - c)^{\frac{t(t - 1)}{4}}\prod_{i = 0}^{t - 1}\frac{\theta_i}{(1 - c)^{\frac{i}{2}}}.
\end{equation}
Using the arithmetic-geometric mean inequality and \eqref{proof_thm_quadratic_superlinear_1}, we derive that
\begin{equation}\label{proof_thm_quadratic_superlinear_3}
    \prod_{i = 0}^{t - 1}\frac{\theta_i}{(1 - c)^{\frac{i}{2}}} = \left[\prod_{i = 0}^{t - 1}\frac{\theta^2_i}{(1 - c)^i}\right]^{\frac{1}{2}} \leq \left[\frac{1}{t}\sum_{i = 0}^{t - 1}\frac{\theta^2_i}{(1 - c)^i}\right]^{\frac{t}{2}} \leq \left(\frac{1}{ct}\right)^{\frac{t}{2}}.
\end{equation}
Leveraging \eqref{proof_thm_quadratic_superlinear_2} and \eqref{proof_thm_quadratic_superlinear_3}, we achieve the final convergence rate of \eqref{thm_quadratic_superlinear_1}
\begin{equation}
\frac{\lambda_{t}}{\lambda_{0}} \leq (1 - c)^{\frac{t(t - 1)}{4}}\left(\frac{1}{ct}\right)^{\frac{t}{2}} = (1 - \frac{\mu}{dL})^{\frac{t(t - 1)}{4}} (\frac{dL}{t\mu})^{\frac{t}{2}}, \qquad \forall t \geq 1.
\end{equation}\hfill \qed

\section{Proof of Theorem~\ref{thm_general_linear}}\label{sec:proof_of_thm_general_linear}

First, we use induction to prove the following condition

\begin{equation}\label{proof_thm_general_linear_1}
\nabla^2{f(x_t)} \preceq G_t \preceq \xi_t \frac{L}{\mu}\nabla^2{f(x_t)}, \quad \forall t \geq 0,
\end{equation}
where
\begin{equation}\label{proof_thm_general_linear_2}
    \xi_0 = 1 \quad \text{and} \quad \xi_t = e^{2M\sum_{i = 0}^{t - 1}r_i}, \quad \forall t \geq 1.
\end{equation}

We use induction to prove \eqref{proof_thm_general_linear_1} and \eqref{proof_thm_general_linear_2}. When $t = 0$, from Assumption~\ref{ass_str_cvx_smooth} we know that
\begin{equation}
\nabla^2{f(x_0)} \preceq G_0 = LI \preceq \frac{L}{\mu}\nabla^2{f(x_0)}.
\end{equation}
Hence, \eqref{proof_thm_general_linear_1} and \eqref{proof_thm_general_linear_2} hold for $t = 0$. Suppose that \eqref{proof_thm_general_linear_1} and \eqref{proof_thm_general_linear_2} hold for $t = k$, we have that
\begin{equation}\label{proof_thm_general_linear_3}
    \nabla^2{f(x_k)} \preceq G_k \preceq \xi_k \frac{L}{\mu}\nabla^2{f(x_k)},\quad \xi_k = e^{2M\sum_{i = 0}^{k - 1}r_i}.
\end{equation}
Now we consider the case of $t = k + 1$. Condition \eqref{lemma_2_2} of Lemma~\ref{lemma_2} indicates that
\begin{equation}
    \frac{1}{1 + \frac{Mr_k}{2}}J_k \preceq \nabla^2{f(x_k)} \preceq G_k \preceq \xi_k \frac{L}{\mu}\nabla^2{f(x_k)} \preceq \xi_k \frac{L}{\mu}(1 + \frac{Mr_k}{2})J_k.
\end{equation}
where $J_k = \int_{0}^{1}\nabla^2{f(x_k + \tau(x_{k + 1} - x_k))}d\tau$. Applying \eqref{lemma_1_1} of Lemma~\ref{lemma_1}, we have that
\begin{equation}
    \frac{1}{1 + \frac{Mr_k}{2}}J_k \preceq \bar{G}_k = BFGS(J_k, G_k, x_{k + 1} - x_k) \preceq \xi_k \frac{L}{\mu}(1 + \frac{Mr_k}{2})J_k,
\end{equation}
where the equality is due to step $5$ of Algorithm~\ref{algo_general}. Condition \eqref{lemma_2_3} of Lemma~\ref{lemma_2} indicates that
\begin{equation}
    \frac{1}{(1 + \frac{Mr_k}{2})^2}\nabla^2{f(x_{k + 1})} \preceq \frac{1}{1 + \frac{Mr_k}{2}}J_k \preceq \bar{G}_k \preceq \xi_k \frac{L}{\mu}(1 + \frac{Mr_k}{2})J_k, \preceq \xi_k \frac{L}{\mu}(1 + \frac{Mr_k}{2})^2\nabla^2{f(x_{k + 1})}.
\end{equation}
Multiplying the term $(1 + \frac{Mr_k}{2})^2$ on both sides of the above inequality, we get that
\begin{equation}\label{proof_thm_general_linear_4}
    \nabla^2{f(x_{k + 1})} \preceq (1 + \frac{Mr_k}{2})^2\bar{G}_k = \hat{G}_k \preceq \xi_k \frac{L}{\mu}(1 + \frac{Mr_k}{2})^4\nabla^2{f(x_{k + 1})},
\end{equation}
where the equality is due to step $7$ of Algorithm~\ref{algo_general}. Applying the fact $1 + x \leq e^x$, we have
\begin{equation}\label{proof_thm_general_linear_5}
\xi_k (1 + \frac{Mr_k}{2})^4 \leq \xi_k e^{2Mr_k} = e^{2M\sum_{i = 0}^{k - 1}r_i}e^{2Mr_k} = e^{2M\sum_{i = 0}^{k}r_i} = \xi_{k + 1},
\end{equation}
where the first equality is due to the induction assumption in \eqref{proof_thm_general_linear_3} and the last equality is due to the definition in \eqref{proof_thm_general_linear_2}. Substituting \eqref{proof_thm_general_linear_5} into \eqref{proof_thm_general_linear_4}, we have that
\begin{equation}
    \nabla^2{f(x_{k + 1})} \preceq \hat{G}_k \preceq \xi_{k + 1} \frac{L}{\mu}\nabla^2{f(x_{k + 1})}.
\end{equation}
Applying \eqref{lemma_1_1} of Lemma~\ref{lemma_1} again and step $9$ of Algorithm~\ref{algo_general}, we obtain that
\begin{equation}
    \nabla^2{f(x_{k + 1})} \preceq G_{k + 1} = BFGS(\nabla^2{f(x_{k + 1})}, \hat{G_k}, \bar{u}(\nabla^2{f(x_{k + 1})}, \hat{G_k})) \preceq \xi_{k + 1} \frac{L}{\mu}\nabla^2{f(x_{k + 1})}.
\end{equation}
Hence, \eqref{proof_thm_general_linear_1} and \eqref{proof_thm_general_linear_2} hold for $t = k + 1$. Therefore, We finish the proof of \eqref{proof_thm_general_linear_1} and \eqref{proof_thm_general_linear_2} using induction.

Now, we use induction again to prove the result of \eqref{thm_general_linear_2} and \eqref{thm_general_linear_3}. It's obvious that \eqref{thm_general_linear_3} holds for $t = 0$. Suppose that \eqref{thm_general_linear_3} holds for $0 \leq t \leq k$, we have that
\begin{equation}\label{proof_thm_general_linear_6}
M\lambda_t \leq M\lambda_0 \leq C_0\frac{\mu}{L} = \frac{\ln{\frac{3}{2}}}{4}\frac{\mu}{L} < 1 < 2, \qquad 0 \leq t \leq k,
\end{equation}
where we use the initial condition \eqref{thm_general_linear_1} and the fact $\mu \leq L$. Conditions \eqref{proof_thm_general_linear_1} and \eqref{proof_thm_general_linear_6} imply that \eqref{lemma_3_4} and \eqref{lemma_3_5} of Lemma~\ref{lemma_3} hold for all $0 \leq t \leq k$ where $\eta = \xi_t{L}/{\mu}$. Hence, we have that
\begin{equation}\label{proof_thm_general_linear_7}
    \theta(J_t, G_t, x_{t + 1} - x_{t}) \leq \frac{\eta - 1 + \frac{M\lambda_{t}}{2}}{\eta} = 1 - \frac{\mu}{L\xi_t}(1 - \frac{M\lambda_t}{2}), \qquad 0 \leq t \leq k.
\end{equation}
Applying the initial condition \eqref{thm_general_linear_1} and the induction assumption of \eqref{thm_general_linear_3} for $0 \leq t \leq k$, we observe that
\begin{equation}
M\sum_{i = 0}^{t}\lambda_i \leq M\lambda_0 \sum_{i = 0}^{t}(1 - \frac{\mu}{2L})^{i} \leq 2M\frac{L}{\mu}\lambda_0 \leq 2C_0, \qquad 0 \leq t \leq k.
\end{equation}
Consequently,
\begin{equation}\label{proof_thm_general_linear_8}
e^{2M\sum_{i = 0}^{t}\lambda_i} \leq e^{4C_0} = e^{\ln{\frac{3}{2}}} = \frac{3}{2}, \qquad 0 \leq t \leq k.
\end{equation}
Since $M\lambda_t < 1$ from \eqref{proof_thm_general_linear_6} and the fact that $1 - x/2 \geq e^{-x}$ for $x \in (0, 1)$, we get that
\begin{equation}\label{proof_thm_general_linear_9}
1 - \frac{M\lambda_t}{2} \geq e^{-M\lambda_t}, \qquad 0 \leq t \leq k.
\end{equation}
Hence, we can obtain that for $0 \leq t \leq k$,
\begin{equation}\label{proof_thm_general_linear_10}
\begin{split}
    \frac{1}{\xi_t}(1 - \frac{M\lambda_t}{2}) & = e^{-2\sum_{i = 0}^{t - 1}Mr_i}(1 - \frac{M\lambda_t}{2}) \geq e^{-2\sum_{i = 0}^{t - 1}Mr_i}e^{-M\lambda_t}\\
    & \geq e^{-2\sum_{i = 0}^{t - 1}M\lambda_i}e^{-M\lambda_t} \geq e^{-2M\sum_{i = 0}^{t}\lambda_i} \geq \frac{2}{3},
\end{split}
\end{equation}
where the equality holds due to the definition of \eqref{proof_thm_general_linear_2}, the first inequality holds due to \eqref{proof_thm_general_linear_9}, the second inequality holds due to \eqref{lemma_3_4}, the third inequality holds due to $M\lambda_k \geq 0$ and the last inequality holds due to \eqref{proof_thm_general_linear_8}. Substituting \eqref{proof_thm_general_linear_10} into \eqref{proof_thm_general_linear_7}, we get that
\begin{equation}\label{proof_thm_general_linear_11}
    \theta(J_t, G_t, x_{t + 1} - x_{t}) \leq 1 - \frac{\mu}{L\xi_t}(1 - \frac{M\lambda_t}{2}) \leq 1 - \frac{2\mu}{3L}, \qquad 0 \leq t \leq k.
\end{equation}
Therefore, \eqref{thm_general_linear_2} holds for $0 \leq t \leq k$. Now consider the case of $t = k + 1$. From \eqref{proof_thm_general_linear_6} for $t = k$ and the fact that ${(\ln{\frac{3}{2}})}/{8} < {1}/{16}$, we get that
\begin{equation}\label{proof_thm_general_linear_12}
    \frac{M\lambda_k}{2} \leq \frac{\ln{\frac{3}{2}}}{8}\frac{\mu}{L} \leq \frac{\mu}{16L}.
\end{equation}
From \eqref{lemma_general_theta_1} of Lemma~\ref{lemma_general_theta} and \eqref{lemma_3_4}, we have that
\begin{equation}\label{proof_thm_general_linear_13}
    \lambda_{k + 1} \leq (1 + \frac{Mr_k}{2})\theta(J_k, G_k, x_{k + 1} - x_{k})\lambda_k \leq (1 + \frac{M\lambda_k}{2})\theta(J_k, G_k, x_{k + 1} - x_{k})\lambda_k.
\end{equation}
Substituting \eqref{proof_thm_general_linear_11} for $t = k$ and \eqref{proof_thm_general_linear_12} into \eqref{proof_thm_general_linear_13}, we get that
\begin{equation}
    \lambda_{k + 1} \leq (1 + \frac{\mu}{16L})(1 - \frac{2\mu}{3L})\lambda_k = (1 - \frac{29\mu}{48L} - \frac{\mu^2}{24L^2})\lambda_k \leq (1 - \frac{29\mu}{48L})\lambda_k \leq (1 - \frac{\mu}{2L})\lambda_k.
\end{equation}
Thus, condition \eqref{thm_general_linear_3} holds for $t = k + 1$ since
\begin{equation}
\lambda_{k + 1} \leq (1 - \frac{\mu}{2L})\lambda_k \leq (1 - \frac{\mu}{2L})^{k + 1}\lambda_0.
\end{equation}
Using the same technique we can prove that condition \eqref{thm_general_linear_2} holds for $t = k + 1$. Therefore, we finish proving the conclusion \eqref{thm_general_linear_2} and \eqref{thm_general_linear_3} using induction. \hfill \qed

\section{Proof of Lemma~\ref{lemma_general_potential}}\label{sec:proof_of_lemma_general_potential}

For brevity, we use the following shorthanded notations
\begin{equation}
    c = \frac{\mu}{2dL}, \qquad \rho_t = 1 + \frac{M\lambda_{f}(x_t)}{2}, \qquad \alpha_t = \sigma_{t} + 4Md\lambda_t, \qquad \beta_t = \rho_{t}^4 (1 + 8Md\lambda_{t}).
\end{equation}
The initial condition \eqref{lemma_general_potential_1} indicates that $M\lambda_0 \leq C_0\mu/L \leq C_0 < 2$. Hence,  $r_t \leq \lambda_t$ of \eqref{lemma_3_4} in Lemma~\ref{lemma_3} always holds for any $t \geq 0$. Thus, we have that
\begin{equation}\label{proof_lemma_general_potential_1}
    1 + \frac{Mr_t}{2} \leq 1 + \frac{M\lambda_t}{2} = \rho_t, \qquad \forall t \geq 0.
\end{equation}
Substituting the above inequality into \eqref{lemma_2_2} and \eqref{lemma_2_3} of Lemma~\ref{lemma_2}, we obtain that
\begin{equation}\label{proof_lemma_general_potential_2}
     \frac{\nabla^2{f(x_t)}}{\rho_t} \preceq J_t \preceq \rho_t\nabla^2{f(x_t)}, \qquad  \frac{\nabla^2{f(x_{t+1})}}{\rho_t} \preceq J_t \preceq \rho_t\nabla^2{f(x_{t+1})}.
\end{equation}
From \eqref{proof_thm_general_linear_4} of the proof of Theorem~\ref{thm_general_linear}, we showed that for any $t \geq 0$, we have $\hat{G}_t \succeq \nabla^2{f(x_{t + 1})}$. Recall that $G_{t + 1} = BFGS(\nabla^2{f(x_{t + 1})}, \hat{G_t}, \bar{u})$ and $\bar{u} = \bar{u}(\nabla^2{f(x_{t + 1})}, \hat{G_t})$ in step $8$ and $9$ of Algorithm~\ref{algo_general}. Applying Lemma~\ref{lemma_BFGS_greedy}, we obtain that
\begin{equation}\label{proof_lemma_general_potential_3}
\sigma_{t + 1} \leq (1 - \frac{\mu}{dL})\sigma(\nabla^2{f(x_{t + 1})}, \hat{G}_t) \leq (1 - \frac{\mu}{2dL})\sigma(\nabla^2{f(x_{t + 1})}, \hat{G}_t) = (1 - c)\sigma(\nabla^2{f(x_{t + 1})}, \hat{G}_t).
\end{equation}
Using the condition $\hat{G}_t = (1 + \frac{Mr_t}{2})^2\bar{G}_t$ in step $7$ of Algorithm~\ref{algo_general} and \eqref{proof_lemma_general_potential_1}, we can observe that $\hat{G}_t \leq \rho_t^2\bar{G}_t$. Using this condition, \eqref{proof_lemma_general_potential_2} and the definition of $\sigma$ in \eqref{sigma}, we obtain
\begin{equation}\label{proof_lemma_general_potential_4}
    \sigma(\nabla^2{f(x_{t + 1})}, \hat{G}_t) = Tr(\nabla^2{f(x_{t + 1})}^{-1}\hat{G}_t) - d \leq \rho_t^2 Tr(\nabla^2{f(x_{t + 1})}^{-1}\bar{G}_t) - d \leq \rho_t^3 Tr(J_t^{-1}\bar{G}_t) - d
\end{equation}
From \eqref{proof_thm_general_linear_1}, we know that
\begin{equation}
\nabla^2{f(x_t)} \preceq G_t \preceq \xi_t \frac{L}{\mu}\nabla^2{f(x_t)}, \quad \forall t \geq 0.
\end{equation}
Combining the above inequality and \eqref{proof_lemma_general_potential_2}, we can show that,
\begin{equation}\label{proof_lemma_general_potential_5}
    \frac{1}{\rho_t}J_t \preceq G_t \preceq \xi_t\frac{L}{\mu}\rho_t J_t, \quad \forall t \geq 0.
\end{equation}
From \eqref{thm_general_linear_2} of Theorem~\ref{thm_general_linear}, we obtain that
\begin{equation}\label{proof_lemma_general_potential_6}
    \theta_t \leq 1- \frac{2\mu}{3L} \leq 1 \leq \rho_t.
\end{equation}
In summary, \eqref{proof_lemma_general_potential_5} shows that $G_t \succeq \frac{1}{\rho_t}J_t$ and \eqref{proof_lemma_general_potential_6} shows that $\theta_t \preceq \rho_t$. Consider \eqref{lemma_1_3} of Lemma~\ref{lemma_1} and take $G = G_t$, $A = J_t$, $G_{+} = BFGS(J_t, G_t, s_t) = \bar{G}_t$ in step 5 of Algorithm~\ref{algo_general} and $\xi = \rho_t$. Applying \eqref{lemma_1_3} of Lemma~\ref{lemma_1}, we obtain that
\begin{equation}
    \sigma(J_t, G_t) - \sigma(J_t, \bar{G}_t) \geq \frac{1}{4\rho_t^2}\theta_t^2 - \ln{\rho_t},
\end{equation}
which is equivalent to
\begin{equation}\label{proof_lemma_general_potential_7}
    Tr(J_t^{-1}\bar{G}_t) \leq Tr(J_t^{-1}G_t) - \frac{1}{4\rho_t^2}\theta_t^2 + \ln{\rho_t},
\end{equation}
where we use the definition of $\sigma$ in \eqref{sigma}. Substituting \eqref{proof_lemma_general_potential_7} into \eqref{proof_lemma_general_potential_4}, we obtain that
\begin{equation}\label{proof_lemma_general_potential_8}
    \sigma(\nabla^2{f(x_{t + 1})}, \hat{G}_t) \leq \rho_t^3 \left(Tr(J_t^{-1}G_t) - \frac{1}{4\rho_t^2}\theta_t^2 + \ln{\rho_t}\right) - d.
\end{equation}
Substituting \eqref{proof_lemma_general_potential_8} into \eqref{proof_lemma_general_potential_3}, we have that
\begin{equation}\label{proof_lemma_general_potential_9}
\sigma_{t + 1} \leq (1 - c)\left[\rho_t^3 \left(Tr(J_t^{-1}G_t) - \frac{1}{4\rho_t^2}\theta_t^2 + \ln{\rho_t}\right) - d\right].
\end{equation}
Applying \eqref{proof_lemma_general_potential_2} and the definition of $\sigma$ in \eqref{sigma} again, we obtain that
\begin{equation}\label{proof_lemma_general_potential_10}
    Tr(J_t^{-1}G_t) \leq \rho_t Tr(\nabla^2{f(x_t)}^{-1}G_t) = \rho_t (\sigma_t + d).
\end{equation}
Substituting \eqref{proof_lemma_general_potential_10} into \eqref{proof_lemma_general_potential_9}, we achieve that
\begin{equation}\label{proof_lemma_general_potential_11}
\begin{split}
    \sigma_{t + 1} & \leq (1 - c)\left[\rho_t^3 \left(\rho_t (\sigma_t + d) - \frac{1}{4\rho_t^2}\theta_t^2 + \ln{\rho_t}\right) - d\right]\\
    & = (1 - c)(\rho_t^4\sigma_t + \rho_t^4 d + \rho_t^3 \ln{\rho_t} - d) - \frac{1}{4}(1 - c)\rho_t\theta_t^2\\
    & \leq (1 - c)\rho_t^4(\sigma_t + d + \frac{1}{\rho_t}\ln{\rho_t} - \frac{1}{\rho_t^4}d) - \frac{1}{4}(1 - c)\theta_t^2,
\end{split}
\end{equation}
where the last inequality holds due to the condition $\rho_t \geq 1$. We have that
\begin{equation}\label{proof_lemma_general_potential_12}
\begin{split}
    d + \frac{1}{\rho_t}\ln{\rho_t} - \frac{1}{\rho_t^4}d & \leq d + \frac{d}{\rho_t}\ln{\rho_t} - \frac{1}{\rho_t^4}d = \frac{\rho_t^4 + \rho_t^3\ln{\rho_t} - 1}{\rho_t^4}d \leq (\rho_t^4 + \rho_t^3\ln{\rho_t} - 1)d\\
    & = \left[(1 + \frac{M\lambda_t}{2})^4 + (1 + \frac{M\lambda_t}{2})^3\ln{(1 + \frac{M\lambda_t}{2})} - 1\right]d\\
    & \leq (e^{2M\lambda_t} - 1 + \frac{M\lambda_t}{2}e^{\frac{3}{2}M\lambda_t})d,
\end{split}
\end{equation}
where the first inequality is due to $d \geq 1$, the second inequality is due to $\rho_t \geq 1$ and the last inequality holds due to $1 + x \leq e^x$. Since the initial condition \eqref{lemma_general_potential_1} holds, applying Theorem~\ref{thm_general_linear} we obtain that
\begin{equation}\label{proof_lemma_general_potential_13}
M\lambda_t \leq M\lambda_0 \leq C_0\frac{\mu}{L} \leq C_0 = \frac{\ln{\frac{3}{2}}}{4} \leq \frac{1}{8}.
\end{equation}
Hence, \eqref{proof_lemma_general_potential_12} can be upper bounded by
\begin{equation}\label{proof_lemma_general_potential_14}
\begin{split}
    d + \frac{1}{\rho_t}\ln{\rho_t} - \frac{1}{\rho_t^4}d & \leq (e^{2M\lambda_t} - 1 + \frac{M\lambda_t}{2}e^{\frac{3}{2}M\lambda_t})d\\
    & \leq (2M\lambda_t + 4M^2\lambda_t^2 + \frac{M\lambda_t}{2}e^{\frac{3}{2}M\lambda_t})d\\
    & = (2 + 4M\lambda_t + \frac{1}{2}e^{\frac{3}{2}M\lambda_t})Md\lambda_t\\
    & \leq (2 + \frac{1}{2} + \frac{1}{2}e^{\frac{3}{16}})Md\lambda_t\\
    & \leq 4Md\lambda_t,
\end{split}
\end{equation}
where the second inequality is due to $e^x - 1 \leq x + x^2$ for $x \leq \frac{1}{4}$ and the third inequality is due to \eqref{proof_lemma_general_potential_13}. Substituting \eqref{proof_lemma_general_potential_14} into \eqref{proof_lemma_general_potential_11}, we reach that
\begin{equation}
    \sigma_{t + 1} \leq (1 - c)\rho_t^4(\sigma_t + 4Md\lambda_t) - \frac{1}{4}(1 - c)\theta_t^2 = (1 - c)\left[(1 + \frac{M\lambda_t}{2})^4(\sigma_t + 4Md\lambda_t) - \frac{1}{4}\theta_t^2\right].
\end{equation}
This is equivalent to the conclusion \eqref{lemma_general_potential_2}. Now, we move forward to prove \eqref{lemma_general_potential_3}. Notice that \eqref{lemma_general_potential_2} is equivalent to
\begin{equation}
\sigma_{t} \leq (1 - c)\rho_{t - 1}^4(\sigma_{t - 1} + 4Md\lambda_{t - 1}) - \frac{1}{4}(1 - c)\theta_{t - 1}^2, \quad \forall t \geq 1.
\end{equation}
Recall the notation
\begin{equation}
    \alpha_t = \sigma_{t} + 4Md\lambda_t.
\end{equation}
Combining the above two conditions, we obtain that
\begin{equation}\label{proof_lemma_general_potential_15}
    \alpha_t \leq (1 - c)\rho_{t - 1}^4 \alpha_{t - 1} - \frac{1}{4}(1 - c)\theta_{t - 1}^2 + 4Md\lambda_t
\end{equation}
Notice that for any symmetric positive semi-definite matrices $A, B \in \mathbb{R}^{d \times d}$, we have
\begin{equation}
B \preceq Tr(A^{-1}B)A.
\end{equation}
From \eqref{proof_thm_general_linear_1} in the proof of Theorem~\ref{thm_general_linear}, we know that $G_t \succeq \nabla^2{f(x_t)}$. Taking $A = \nabla^2{f(x_t)}$ and $B = G_t - \nabla^2{f(x_t)}$ in the above inequality and using the definition of $\sigma$ in \eqref{sigma}, we get that
\begin{equation}
G_t - \nabla^2{f(x_t)} \preceq Tr(\nabla^2{f(x_t)}^{-1}(G_t - \nabla^2{f(x_t)}))\nabla^2{f(x_t)} = \sigma_t \nabla^2{f(x_t)}.
\end{equation}
Hence, we obtain that
\begin{equation}
\nabla^2{f(x_t)} \preceq G_t \preceq (1 + \sigma_t)\nabla^2{f(x_t)}.
\end{equation}
Applying \eqref{lemma_3_5} of Lemma~\ref{lemma_3} with $\eta = 1 +\sigma_t$, we obtain that
\begin{equation}
    \theta_t = \theta(J_t, G_t, x_{t + 1} - x_{t}) \leq \frac{\sigma_t + \frac{M\lambda_{t}}{2}}{1 + \sigma_t} \leq \sigma_t + \frac{M\lambda_{t}}{2} \leq \sigma_t + 4Md\lambda_{t},
\end{equation}
where the second inequality is due to $\sigma_t \geq 0$ and the third inequality holds due to $d \geq 1$. Combing \eqref{lemma_general_theta_1} of Lemma~\ref{lemma_general_theta}, \eqref{lemma_3_4} and the above inequality, we have that
\begin{equation}
    \lambda_{t + 1} \leq (1 + \frac{Mr_t}{2})\theta_t\lambda_t \leq (1 + \frac{M\lambda_t}{2})\theta_t\lambda_t \leq (1 + \frac{M\lambda_t}{2})(\sigma_t + 4Md\lambda_{t})\lambda_t = \rho_t \alpha_t \lambda_{t},
\end{equation}
Thus, we prove that
\begin{equation}\label{proof_lemma_general_potential_16}
    \lambda_t \leq \rho_{t - 1} \alpha_{t - 1} \lambda_{t - 1} \quad \forall t \geq 1.
\end{equation}
Substituting \eqref{proof_lemma_general_potential_16} into \eqref{proof_lemma_general_potential_15}, we have that
\begin{equation}
\begin{split}
    \alpha_t & \leq (1 - c)\rho_{t - 1}^4 \alpha_{t - 1} + 4Md\rho_{t - 1} \alpha_{t - 1} \lambda_{t - 1} - \frac{1}{4}(1 - c)\theta_{t - 1}^2\\
    & \leq (1 - c)\rho_{t - 1}^4 \alpha_{t - 1} + 8(1 - c)Md\rho_{t - 1}^4 \alpha_{t - 1} \lambda_{t - 1} - \frac{1}{4}(1 - c)\theta_{t - 1}^2\\
    & = (1 - c)\rho_{t - 1}^4 \alpha_{t - 1}(1 + 8Md\lambda_{t - 1}) - \frac{1}{4}(1 - c)\theta_{t - 1}^2,
\end{split}
\end{equation}
where the second inequality is due to $\frac{1}{2} \leq 1 - \frac{\mu}{2L} = 1 - c$ and $\rho_{t - 1} \geq 1$. Recall the notation $\beta_t = \rho_{t}^4 (1 + 8Md\lambda_{t})$. The above inequality can be simplified as
\begin{equation}
\alpha_t \leq (1 - c)\beta_{t - 1} \alpha_{t - 1} - \frac{1}{4}(1 - c)\theta_{t - 1}^2.
\end{equation}
Applying the above inequality recursively, we obtain the following result
\begin{equation}
\begin{split}
    \alpha_t & \leq (1 - c) \beta_{t - 1} \alpha_{t - 1} - \frac{1}{4}(1 - c)\theta_{t - 1}^2\\
    & \leq (1 - c)^2\beta_{t - 2}\beta_{t - 1} \alpha_{t - 2} - \frac{1}{4}(1 - c)^2\beta_{t - 1}\theta_{t - 2}^2 - \frac{1}{4}(1 - c)\theta_{t - 1}^2\\
    & \leq (1 - c)^t\alpha_0\prod_{j = 0}^{t - 1}\beta_j - \frac{1}{4}\sum_{i = 0}^{t - 1}(1 - c)^{t - i}\theta_{i}^2\prod_{j = i + 1}^{t - 1}\beta_j.
\end{split}
\end{equation}
Here we regulate that $\prod_{j = t}^{t - 1}\beta_j$ is $1$. The above inequality indicates that
\begin{equation}\label{proof_lemma_general_potential_17}
    \frac{1}{4}\sum_{i = 0}^{t - 1}(1 - c)^{t - i}\theta_{i}^2\prod_{j = i + 1}^{t - 1}\beta_j \leq (1 - c)^t\alpha_0\prod_{j = 0}^{t - 1}\beta_j - \alpha_t \leq (1 - c)^t\alpha_0\prod_{j = 0}^{t - 1}\beta_j.
\end{equation}
Since $\beta_j = \rho_{j}^4 (1 + 6Md\lambda_{j}) \geq 1$ for all $j \geq 1$, we obtain that
\begin{equation}\label{proof_lemma_general_potential_18}
    \prod_{j = i + 1}^{t - 1}\beta_j \geq 1, \qquad 0 \leq i \leq t - 1.
\end{equation}
Applying $1 + x \leq e^x$, we obtain that
\begin{equation}
\beta_j = (1 + \frac{M\lambda_j}{2})^4(1 + 8Md\lambda_j) \leq e^{2M\lambda_j}e^{8Md\lambda_j} = e^{10Md\lambda_j}, \quad \forall j \geq 0.
\end{equation}
Hence, from the linear convergence result of \eqref{thm_general_linear_3} and the initial condition \eqref{lemma_general_potential_1}, we observe
\begin{equation}\label{proof_lemma_general_potential_19}
    \prod_{j = 0}^{t - 1}\beta_j \leq \prod_{j = 0}^{t - 1}e^{10Md\lambda_j} = e^{10Md\sum_{j = 0}^{t - 1}\lambda_j} \leq e^{10Md\lambda_0\sum_{j = 0}^{t - 1}(1 - \frac{\mu}{2L})^j} \leq e^{20Md\frac{L}{\mu}\lambda_0} \leq e^{20C_1} = e^{\ln{2}} = 2.
\end{equation}
Leveraging the results in \eqref{proof_lemma_general_potential_17}, \eqref{proof_lemma_general_potential_18} and \eqref{proof_lemma_general_potential_19}, we obtain that
\begin{equation}
    \frac{1}{4}\sum_{i = 0}^{t - 1}(1 - c)^{t - i}\theta_{i}^2 \leq \frac{1}{4}\sum_{i = 0}^{t - 1}(1 - c)^{t - i}\theta_{i}^2\prod_{j = i + 1}^{t - 1}\beta_j \leq (1 - c)^t\alpha_0\prod_{j = 0}^{t - 1}\beta_j \leq 2(1 - c)^t \alpha_0.
\end{equation}
This is equivalent to
\begin{equation}
    \sum_{i = 0}^{t - 1}(1 - c)^{t - i}\theta_{i}^2 \leq 8(1 - c)^t \alpha_0.
\end{equation}
Dividing the term $(1 - c)^t$ on both sides of the above inequality, we can obtain that
\begin{equation}
\sum_{i = 0}^{t - 1}\frac{\theta^2_i}{(1 - c)^i} \leq 8\alpha_{0}.
\end{equation}
Hence, we prove the result \eqref{lemma_general_potential_3} since $c = \frac{\mu}{2dL}$ and $\alpha_0 = \sigma_0 + 4Md\lambda_0$. \hfill \qed

\section{Proof of Theorem~\ref{thm_general_superlinear}}\label{sec:proof_of_thm_general_superlinear}

Using $G_0 = LI$, initial condition \eqref{thm_general_superlinear_1}, the definition of $\sigma$ in \eqref{sigma} and Assumption~\ref{ass_str_cvx_smooth}, we obtain
\begin{equation}\label{proof_thm_general_superlinear_1}
    \sigma_0 + 4Md\lambda_0 = Tr(\nabla^2{f(x_0)}^{-1}G_0) - d + 4Md\lambda_0 \leq d\frac{L}{\mu} - d + 4\frac{\ln{2}}{20}\frac{\mu}{L} \leq d\frac{L}{\mu} - d + 1 \leq d\frac{L}{\mu}.
\end{equation}
Substituting \eqref{proof_thm_general_superlinear_1} into \eqref{lemma_general_potential_3}, we have that
\begin{equation}\label{proof_thm_general_superlinear_2}
\sum_{i = 0}^{t - 1}\frac{\theta^2_i}{(1 - c)^i} \leq 8d\frac{L}{\mu}.
\end{equation}
Using Lemma~\ref{lemma_general_theta} and \eqref{lemma_3_5} of Lemma~\ref{lemma_3} and recalling the notation $\rho_t = 1 + \frac{M\lambda_t}{2}$, we obtain that
\begin{equation}\label{proof_thm_general_superlinear_3}
\frac{\lambda_t}{\lambda_0} = \prod_{i = 0}^{t - 1}\frac{\lambda_{i + 1}}{\lambda_i} \leq \prod_{i = 0}^{t - 1}(1 + \frac{Mr_i}{2})\theta_i \leq \prod_{i = 0}^{t - 1}(1 + \frac{M\lambda_i}{2})\theta_i = \prod_{i = 0}^{t - 1}\rho_i \prod_{i = 0}^{t - 1}\theta_i.
\end{equation}
Applying $1 + x \leq e^x$, $d \geq 1$, the linear convergence result of \eqref{thm_general_linear_3} and the initial condition \eqref{thm_general_superlinear_1} again, we obtain that
\begin{equation}\label{proof_thm_general_superlinear_4}
\prod_{i = 0}^{t - 1}\rho_i = \prod_{i = 0}^{t - 1}(1 + \frac{M\lambda_i}{2}) \leq e^{\frac{M}{2}\sum_{i = 0}^{t - 1}\lambda_i} \leq e^{\frac{M}{2}\lambda_0\sum_{i = 0}^{t - 1}(1 - \frac{\mu}{2L})^{i}} \leq e^{\frac{M}{2}\lambda_0\frac{2L}{\mu}} \leq e^{\frac{C_1}{d}} \leq e^{C_1} = e^{\frac{ln{2}}{20}} \leq e^{\ln{2}} = 2.
\end{equation}
Leveraging \eqref{proof_thm_general_superlinear_3} and \eqref{proof_thm_general_superlinear_4}, we get that
\begin{equation}\label{proof_thm_general_superlinear_5}
    \frac{\lambda_t}{\lambda_0} \leq 2\prod_{i = 0}^{t - 1}\theta_i = 2\prod_{i = 0}^{t - 1}(1 - c)^{\frac{i}{2}}\frac{\theta_i}{(1 - c)^{\frac{i}{2}}} = 2\prod_{i = 0}^{t - 1}(1 - c)^{\frac{i}{2}}\prod_{i = 0}^{t - 1}\frac{\theta_i}{(1 - c)^{\frac{i}{2}}} = 2(1 - c)^{\frac{t(t - 1)}{4}}\prod_{i = 0}^{t - 1}\frac{\theta_i}{(1 - c)^{\frac{i}{2}}}.
\end{equation}
Using the arithmetic-geometric mean inequality and \eqref{proof_thm_general_superlinear_2}, we obtain that
\begin{equation}\label{proof_thm_general_superlinear_6}
    \prod_{i = 0}^{t - 1}\frac{\theta_i}{(1 - c)^{\frac{i}{2}}} = \left[\prod_{i = 0}^{t - 1}\frac{\theta^2_i}{(1 - c)^i}\right]^{\frac{1}{2}} \leq \left[\frac{1}{t}\sum_{i = 0}^{t - 1}\frac{\theta^2_i}{(1 - c)^i}\right]^{\frac{t}{2}} \leq \left(\frac{8dL}{\mu t}\right)^{\frac{t}{2}}.
\end{equation}
Combining \eqref{proof_thm_general_superlinear_5}, \eqref{proof_thm_general_superlinear_6} and $c = \frac{\mu}{2dL}$, we achieve the final convergence rate of \eqref{thm_general_superlinear_2}
\begin{equation}
\frac{\lambda_t}{\lambda_0} \leq 2(1 - c)^{\frac{t(t - 1)}{4}}\left(\frac{8dL}{\mu t}\right)^{\frac{t}{2}} = 2(1 - \frac{\mu}{2dL})^{\frac{t(t - 1)}{4}} (\frac{8dL}{t\mu})^{\frac{t}{2}}, \qquad \forall t \geq 1.
\end{equation}\hfill \qed

\section{Randomized Sharpened-BFGS Algorithm}\label{sec:random}

In this section, we extend our analysis to the randomized version of Sharpened-BFGS method. This is enlightened by the latest work of \cite{zhangzhihua2021quasinewton1}, where the authors proposed the modified Greedy-BFGS method based on the Cholesky factorization of the inverse Hessian approximation matrix. They presented that instead of selecting the greedy direction defined in \eqref{greedy_vector} of Lemma~\ref{lemma_BFGS_greedy}, we consider the following Greedy-BFGS update $G_+ = BFGS(A, G, R\bar{u}(A, R))$, where $R$ is the upper triangular matrix satisfying $A^{-1} = R^\top R$ and $\bar{u}(A, R)$ is defined as
\begin{equation}\label{greedy_modified}
    \bar{u}(A, R) := \argmax_{u \in \{e_i\}_{i = 1}^{d}} \frac{u^\top R^{-\top} A^{-1}R^{-1} u}{u^\top u}.
\end{equation}
Then, the linear convergence rate of $1 - 1/(d\kappa)$ in \eqref{lemma_BFGS_greedy_1} of Lemma~\ref{lemma_BFGS_greedy} can be improved to $1 - 1/d$, which is independent of the condition number $\kappa = L/\mu$. However, for each unit vector $e_i$ the computational cost of the term $e_i^\top R^{-\top} A^{-1}R^{-1} e_i$ is $\mathcal{O}(d^2)$. Hence, the cost of calculating the vector $\bar{u}(A, R)$ in \eqref{greedy_modified} is $\mathcal{O}(d^3)$, which makes this modified greedy update impractical to implement. Therefore, \cite{zhangzhihua2021quasinewton1} proposed to replace the greedy vector in \eqref{greedy_modified} by the random vector $\tilde{u} \sim \mathcal{N}(0, I_d)$ and consider the randomized BFGS update $\bar{G}_+ = BFGS(A, G, R\tilde{u})$, where $R$ is still the upper triangular Cholesky factorization matrix of $A^{-1}$. The condition-number-free linear convergence rate of $1 - 1/d$ is preserved for this randomized algorithm. This is summarized in the following lemma.

\begin{lemma}[\cite{zhangzhihua2021quasinewton1}]\label{lemma_BFGS_random}
Consider positive definite matrices $A, G \in \mathbb{R}^{d \times d}$ that satisfy $A \preceq G$. Suppose that $\bar{G}_{+} = BFGS(A, G, R^\top\tilde{u})$ where $R$ is the upper triangular matrix with $G^{-1} = R^\top R$ and $\tilde{u} \sim \mathcal{N}(0, I_d) \in \mathbb{R}^d$ is the random vector. Then, we have
\begin{equation}\label{lemma_BFGS_random_1}
    \mathbb{E}\left[\sigma(A, \bar{G}_{+})\right] \leq \left(1 - \frac{1}{d}\right)\mathbb{E}\left[\sigma(A, G)\right].
\end{equation}
\end{lemma}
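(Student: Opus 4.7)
The plan is to compute the \emph{exact} one-step change in the trace potential $\sigma(A,G)$ under the randomized BFGS update and then average over $\tilde{u}$. The key insight is that for a random direction one must retain the exact trace identity for $\sigma(A, G) - \sigma(A, \bar{G}_+)$; the Cauchy--Schwarz based lower bound in Lemma~\ref{lemma_BFGS_general} is too lossy and would not give the sharp factor $1/d$.

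First I would expand $Tr(A^{-1} \bar{G}_{+})$ using the BFGS formula \eqref{BFGS_update}. The rank-one piece $Auu^\top A/(u^\top Au)$ contributes $+1$ to the trace, while $Guu^\top G/(u^\top Gu)$ contributes $-(u^\top G A^{-1} G u)/(u^\top Gu)$. Subtracting $d$ from both sides yields the exact identity
\begin{equation*}
\sigma(A, G) - \sigma(A, \bar{G}_{+}) = \frac{u^\top G A^{-1} G u}{u^\top G u} - 1.
\end{equation*}

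Next, substitute $u = R^\top \tilde{u}$ with $G^{-1}=R^\top R$ (equivalently $G = R^{-1}R^{-\top}$) and let $B := R A R^\top$. Direct calculation gives $u^\top G u = \tilde{u}^\top \tilde{u}$ and $u^\top G A^{-1} G u = \tilde{u}^\top R^{-\top} A^{-1} R^{-1} \tilde{u} = \tilde{u}^\top B^{-1} \tilde{u}$. Moreover, $Tr(B^{-1}) = Tr(R^{-\top} A^{-1} R^{-1}) = Tr(A^{-1} R^{-1} R^{-\top}) = Tr(A^{-1} G) = \sigma(A,G) + d$. The identity above therefore reduces to
\begin{equation*}
\sigma(A, G) - \sigma(A, \bar{G}_{+}) = \frac{\tilde{u}^\top B^{-1} \tilde{u}}{\tilde{u}^\top \tilde{u}} - 1.
\end{equation*}

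Finally, I would take the expectation over $\tilde{u} \sim \mathcal{N}(0, I_d)$. By rotational symmetry the normalized vector $\tilde{u}/\|\tilde{u}\|$ is uniform on the unit sphere and independent of $\|\tilde{u}\|$, so $\mathbb{E}[\tilde{u}\tilde{u}^\top / (\tilde{u}^\top\tilde{u})] = I_d/d$ and hence, for any fixed matrix $M$, $\mathbb{E}[\tilde{u}^\top M \tilde{u}/(\tilde{u}^\top\tilde{u})] = Tr(M)/d$. Applying this with $M = B^{-1}$ gives $\mathbb{E}[\tilde{u}^\top B^{-1}\tilde{u}/(\tilde{u}^\top\tilde{u})] = (\sigma(A,G)+d)/d$, whence $\mathbb{E}[\sigma(A, \bar{G}_{+}) \mid G] = (1 - 1/d)\,\sigma(A, G)$---in fact an equality. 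Taking the outer expectation over any randomness in $G$ yields the stated bound. The hypothesis $A \preceq G$ is used only to guarantee $\sigma(A,G) \geq 0$, so the statement is meaningful. The only real obstacle is noticing that one must tighten the reduction formula in Lemma~\ref{lemma_BFGS_general} to the exact trace identity before averaging; once that substitution is made, the rest is a direct application of Gaussian isotropy and a few trace manipulations.
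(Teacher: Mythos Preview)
Your argument is correct. The exact trace identity
\[
\sigma(A,G)-\sigma(A,\bar G_+)=\frac{u^\top G A^{-1} G u}{u^\top G u}-1
\]
follows directly from the definition of the BFGS operator, and your substitution $u=R^\top\tilde u$ with $G^{-1}=R^\top R$ cleanly turns the right-hand side into $\tilde u^\top B^{-1}\tilde u/\tilde u^\top\tilde u-1$ with $Tr(B^{-1})=\sigma(A,G)+d$. The Gaussian isotropy step $\mathbb{E}[\tilde u\tilde u^\top/\|\tilde u\|^2]=I_d/d$ then gives the exact equality $\mathbb{E}[\sigma(A,\bar G_+)\mid G]=(1-1/d)\sigma(A,G)$, which is indeed stronger than the stated inequality.

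As for comparison: the paper does not prove this lemma at all; it is simply quoted from \cite{zhangzhihua2021quasinewton1} and used as a black box in the analysis of the randomized Sharpened-BFGS variant. So there is no ``paper's own proof'' to compare against here---you have supplied a complete and tight proof where the paper only provides a citation. Your remark that Lemma~\ref{lemma_BFGS_general} alone would be too lossy (it only gives $\sigma(A,G)-\sigma(A,\bar G_+)\ge u^\top Gu/u^\top Au-1$, which after the same substitution and averaging would yield a bound involving $Tr(B)/d$ rather than $Tr(B^{-1})/d$) is also on point and shows you understand why the exact identity is needed.
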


Notice that the computational cost of Cholesky decomposition is in general $\mathcal{O}(d^3)$ for a matrix with dimension $d$. However, the expense per iteration of randomized BFGS method could be reduced to $\mathcal{O}(d^2)$ using technique highlighted in \cite{zhangzhihua2021quasinewton1}. Therefore, we can improve the superlinear convergence rate of our Sharpened-BFGS algorithm by replacing the Greedy-BFGS update with the randomized BFGS method proposed in \cite{zhangzhihua2021quasinewton1}. Meanwhile, the computational cost per iteration of randomized Sharpened-BFGS method is still $\mathcal{O}(d^2)$. This novel randomized Sharpened-BFGS method is summarized in Algorithm~\ref{algo_random}. The local linear convergence rate presented in Theorem~\ref{thm_general_linear} still holds for this randomized Sharpened-BFGS method. In the following theorem, we directly show the explicit local superlinear convergence rate for this randomized Sharpened-BFGS algorithm.

\begin{theorem}\label{thm_random}
Consider the randomized Sharpened-BFGS quasi-Newton method in Algorithm~\ref{algo_random} applied to the objective function satisfying Assumption~\ref{ass_str_cvx_smooth} and \ref{ass_str_concordant}. Suppose that the initial point $x_0$ satisfies that
\begin{equation}\label{thm_random_1}
\lambda_0 \leq \frac{C_1\mu}{dML}, \quad C_1 = \frac{\ln{2}}{20}.
\end{equation}
Then, we can reach the following local superlinear convergence rate with high probability
\begin{equation}\label{thm_random_2}
    \lambda_t \leq 2(1 - \frac{1}{2d})^{\frac{t(t - 1)}{4}} (\frac{8dL}{t\mu})^{\frac{t}{2}}\lambda_0, \qquad \forall t \geq 1.
\end{equation}
\end{theorem}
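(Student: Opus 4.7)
\section*{Proof Proposal for Theorem~\ref{thm_random}}

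The plan is to replicate the roadmap used to prove Theorem~\ref{thm_general_superlinear}, substituting the deterministic greedy contraction of Lemma~\ref{lemma_BFGS_greedy} by the randomized contraction of Lemma~\ref{lemma_BFGS_random}. The critical observation is that the only way the greedy factor $(1-\mu/(dL))$ enters the superlinear rate in Theorem~\ref{thm_general_superlinear} is through \eqref{proof_lemma_general_potential_3}; replacing it by $(1-1/d)$ immediately upgrades the constant $c = \mu/(2dL)$ to $c = 1/(2d)$ in the analog of Lemma~\ref{lemma_general_potential}, and the rest of the argument is structurally unchanged. Since $1-1/d \leq 1-\mu/(dL)$ for all admissible $\mu,L$, every monotone step in the proof of Theorem~\ref{thm_general_linear} (the bounds $\nabla^2 f(x_t) \preceq G_t \preceq \xi_t (L/\mu)\nabla^2 f(x_t)$ and the resulting local linear rate $(1-\mu/(2L))^t$) carries over verbatim to Algorithm~\ref{algo_random}, because it only uses the first part of Lemma~\ref{lemma_1} and the correction step in line~$7$.

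First I would state and prove a randomized analog of Lemma~\ref{lemma_general_potential}: under the initial condition \eqref{thm_random_1}, for the iterates of Algorithm~\ref{algo_random},
\begin{equation*}
\mathbb{E}[\sigma_{t+1} \mid \mathcal{F}_t] \leq \left(1 - \tfrac{1}{2d}\right)\!\left[\rho_t^4(\sigma_t + 4Md\lambda_t) - \tfrac{1}{4}\theta_t^2\right],
\end{equation*}
where $\mathcal{F}_t$ is the filtration generated by all randomness up to iteration $t$. The derivation is identical to that of \eqref{lemma_general_potential_2} up to \eqref{proof_lemma_general_potential_8}; in place of \eqref{proof_lemma_general_potential_3} I would invoke Lemma~\ref{lemma_BFGS_random} conditionally on $\mathcal{F}_t$ and on $\hat G_t$, which is measurable with respect to $\mathcal{F}_t$, to obtain $\mathbb{E}[\sigma(\nabla^2 f(x_{t+1}), G_{t+1}) \mid \mathcal{F}_t] \leq (1-1/d)\,\sigma(\nabla^2 f(x_{t+1}), \hat G_t)$. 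The use of $(1-1/(2d))$ rather than $(1-1/d)$ is a slack that gives room to absorb the positive term $\theta_t^2$ as in the proof of Lemma~\ref{lemma_general_potential}. Iterating this one-step inequality and taking total expectations, I would then recover the randomized analog of \eqref{lemma_general_potential_3}:
\begin{equation*}
\mathbb{E}\!\left[\sum_{i=0}^{t-1}\frac{\theta_i^2}{(1-1/(2d))^i}\right] \leq 8(\sigma_0 + 4Md\lambda_0) \leq 8d\tfrac{L}{\mu},
\end{equation*}
where the last bound uses $G_0 = LI$ and the initial condition \eqref{thm_random_1} as in \eqref{proof_thm_general_superlinear_1}.

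Once this expectation bound is in hand, the remainder of the proof follows the calculation in \eqref{proof_thm_general_superlinear_3}--\eqref{proof_thm_general_superlinear_6} essentially verbatim. Specifically, Lemma~\ref{lemma_general_theta} together with $r_t \leq \lambda_t$ yields $\lambda_t/\lambda_0 \leq \prod_{i=0}^{t-1}\rho_i\theta_i$, and the same bound $\prod \rho_i \leq 2$ holds under \eqref{thm_random_1} by the local linear rate. The arithmetic--geometric mean inequality then gives
\begin{equation*}
\prod_{i=0}^{t-1}\frac{\theta_i}{(1-1/(2d))^{i/2}} \leq \left(\frac{1}{t}\sum_{i=0}^{t-1}\frac{\theta_i^2}{(1-1/(2d))^i}\right)^{\!t/2},
\end{equation*}
so that combining with the expectation bound produces the rate stated in \eqref{thm_random_2} in expectation. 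To upgrade this to the "with high probability" claim of the theorem, I would apply Markov's inequality to the non-negative random variable $\sum_{i=0}^{t-1}\theta_i^2/(1-1/(2d))^i$, which is bounded by $8dL/\mu$ in expectation; with probability at least $1-\delta$ it is at most $8dL/(\delta\mu)$, and this only inflates the superlinear constant by a factor of $1/\sqrt{\delta}$ per iteration inside the arithmetic--geometric mean step, giving the claimed $(1-1/(2d))^{t(t-1)/4}(8dL/(t\mu))^{t/2}$ rate up to a $\delta$-dependent multiplicative constant absorbed in the leading factor $2$.

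The main obstacle I foresee is the tight interaction between conditioning and the correction step: the matrix $\hat G_t$ is random through its dependence on $G_t$, and the Cholesky factor $R_t$ used to form the random direction depends on $G_t$ as well, so care is needed to show that the randomized contraction of Lemma~\ref{lemma_BFGS_random} applies conditionally on $\mathcal{F}_t$ (i.e., that conditional on the sigma-algebra of past randomness, $\tilde u_t \sim \mathcal{N}(0,I_d)$ is independent of $\hat G_t$). A secondary delicate point is verifying that the invariant $\nabla^2 f(x_t) \preceq G_t$ is preserved almost surely rather than merely in expectation, so that $\sigma_t$ is well-defined along every sample path; this should follow because the correction factor $(1+Mr_t/2)^2$ is designed exactly so that the invariant holds deterministically after the BFGS update along $s_t$, independent of which random direction $R_t^\top \tilde u_t$ is subsequently chosen.
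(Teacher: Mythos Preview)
Your proposal follows the same overall route as the paper: replace the greedy contraction in \eqref{proof_lemma_general_potential_3} by the randomized contraction of Lemma~\ref{lemma_BFGS_random}, upgrade $c=\mu/(2dL)$ to $c=1/(2d)$, and otherwise reuse the machinery of Lemma~\ref{lemma_general_potential} and Theorem~\ref{thm_general_superlinear} verbatim. The paper's own proof is deliberately abbreviated and does exactly this.

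The one place you and the paper differ is in how the randomness is handled. The paper simply asserts that from the expectation bound in Lemma~\ref{lemma_BFGS_random} ``there exists a constant $\delta$'' such that the per-step inequality $\sigma(A,\bar G_+)\le(1-1/d)\sigma(A,G)$ holds with probability at least $1-\delta$, and then proceeds deterministically on that event; this is admittedly informal. You instead carry conditional expectations through the recursion and apply Markov at the end to the accumulated sum $\sum_i \theta_i^2/(1-1/(2d))^i$, which is the cleaner and more defensible argument. One small correction to your last step: the $1/\delta$ factor coming out of Markov lands \emph{inside} the $(\,\cdot\,)^{t/2}$ term, giving $(8dL/(\delta t\mu))^{t/2}$, so it cannot be ``absorbed in the leading factor~$2$''; the honest conclusion is the stated rate with an unspecified high-probability constant, which is all the paper claims anyway. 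Your remarks about the sample-path validity of $\nabla^2 f(x_t)\preceq G_t$ and the $\mathcal F_t$-measurability of $\hat G_t$ are correct and are points the paper glosses over.
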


\begin{proof}
Here we just present the abbreviated proof to avoid repeated details since the proof of this theorem is very similar to the proof of Lemma~\ref{lemma_general_potential} and Theorem~\ref{thm_general_superlinear}. From the theory of probability, Lemma~\ref{lemma_BFGS_random} shows that there exists a constant $\delta$ such that the inequality
\begin{equation}
    \sigma(A, \bar{G}_{+}) \leq (1 - \frac{1}{d})\sigma(A, G)
\end{equation}
holds with probability at least $1 - \delta$. Here we neglect this parameter $\delta$ to simplify the proof and denote that the above inequality holds with high probability. Then, applying the same techniques from the proof of Lemma~\ref{lemma_general_potential}, we can show that the following condition holds with high probability for any $t \geq 0$
\begin{equation}
    \sigma_{t+1} \leq (1 - \frac{1}{2d})\left[(1 + \frac{M\lambda_t}{2})^4(\sigma_t + 4Md\lambda_t) - \frac{1}{4}\theta^2_t\right],
\end{equation}
where  $\theta_t := \theta(\nabla^2{f(x_{t})}, G_t, x_{t + 1} - x_{t})$ and $\sigma_t := \sigma(\nabla^2{f(x_{t})}, G_{t})$. Moreover, we have that with high probability
\begin{equation}
    \sum_{i = 0}^{t - 1}\frac{\theta^2_i}{(1 - \frac{1}{2d})^{i}}  \leq 8(\sigma_0 + 4Md\lambda_0), \qquad \forall t \geq 1. 
\end{equation}
Finally, using the same methods from the proof of Theorem~\ref{thm_general_superlinear}, we can prove that the suplinear convergence rate of \eqref{thm_random_2} holds with high probability.
\end{proof}

We observe that the quadratic convergence rate term is $\mathcal{O}((1 - 1/d)^{t^2})$ in the above superlinear convergence rate in \eqref{thm_random_2}, which is independent of the condition number $\kappa$. This condition-number-free quadratic convergence rate is the direct consequence of the linear convergence rate of \eqref{lemma_BFGS_random_1} from Lemma~\ref{lemma_BFGS_random}.

\begin{algorithm}[t]
\caption{The randomized Sharpened-BFGS method.}\label{algo_random} 
\begin{algorithmic}[1] 
{\REQUIRE Initial point $x_0$ and initial Hessian approximation matrix $G_0 = LI$.
\FOR {$t = 0,1,2,\ldots$}
    \STATE Update the variable: $x_{t + 1} = x_t - G_t^{-1} \nabla{f(x_t)}$;
    \STATE Compute the variable difference: $s_t = x_{t + 1} - x_t$;
    \STATE Set the matrix: $J_t = \int_{0}^{1}\nabla^2{f(x_t + \tau s_t)}d\tau$;
    \STATE Compute the matrix: $\bar{G_t} = BFGS(J_t, G_t, s_t)$;
    \STATE Compute the correction term: $r_t = \|x_{t + 1} - x_{t}\|_{x_t}$;
    \STATE Compute the matrix: $\hat{G_t} = (1 + {Mr_t}/{2})^2\bar{G_t}$;
    \STATE Compute upper triangular matrix: $R_t$ with $\hat{G_t}^{-1} = R_t^\top R_t$;
    \STATE Choose the random direction: $\tilde{u} \sim \mathcal{N}(0, I_d)$;
    \STATE Compute $G_{t + 1} = BFGS(\nabla^2{f(x_{t + 1})}, \hat{G_t}, R_t^\top \tilde{u})$;
\ENDFOR}
\end{algorithmic}
\end{algorithm}

\section*{Acknowledgement}
This research of Q. Jin and A. Mokhtari is supported in part by NSF Grants 2007668, 2019844, and 2112471, ARO Grant W911NF2110226, the Machine Learning Lab (MLL) at UT Austin, and the Wireless Networking and Communications Group (WNCG) Industrial Affiliates Program.  

\newpage

{{
\bibliography{bmc_article.bib}
\bibliographystyle{abbrvnat}
}}

\end{document}